\documentclass{amsart}
\usepackage[utf8x]{inputenc}
\usepackage{amsmath,amsthm,amsfonts,amssymb,latexsym, enumerate}
\usepackage[matrix, arrow]{xy}
\xyoption{arrow}
\subjclass{20C20, 20C11}

\begin{document}

\theoremstyle{plain}
\newtheorem{thm}{Theorem}[section]
\newtheorem{lem}[thm]{Lemma}
\newtheorem{pro}[thm]{Proposition}
\newtheorem{cor}[thm]{Corollary}
\newtheorem{statement}[thm]{}

\theoremstyle{definition}
\newtheorem{que}[thm]{Question}
\newtheorem{rem}[thm]{Remark}
\newtheorem{defi}[thm]{Definition}
\newtheorem{Question}[thm]{Question}
\newtheorem{Conjecture}[thm]{Conjecture}
\newtheorem{exa}[thm]{Example}
\newtheorem{Notation}[thm]{Notation}

\def\CE{{\mathcal{E}}} \def\uCE{\underline{{\mathcal{E}}}}
\def\CF{{\mathcal{F}}}  
\def\CG{{\mathcal{G}}}
\def\CH{{\mathcal{H}}}
\def\CL{{\mathcal{L}}} \def\uCL{\underline{{\mathcal{L}}}}
\def\CO{{\mathcal{O}}}
\def\CP{{\mathcal{P}}}
\def\CR{{\mathcal{R}}}
\def\CT{{\mathcal{T}}} \def\uCT{\underline{{\mathcal{T}}}}

\def\OD{{\mathcal{O}D}}
\def\OG{{\mathcal{O}G}}
\def\OGb{{\mathcal{O}Gb}}
\def\OHc{{\mathcal{O}Hc}} \def\tenOHc{\otimes_{\mathcal{O}Hc}}
\def\OH{{\mathcal{O}H}}
\def\OL{{\mathcal{O}L}}
\def\OLd{{\mathcal{O}Ld}}
\def\ON{{\mathcal{O}N}}
\def\OP{{\mathcal{O}P}}
\def\OPE{{\mathcal{O}(P\rtimes E)}} \def\kPE{{k(P\rtimes E)}}
\def\OQ{{\mathcal{O}Q}}
\def\OR{{\mathcal{O}R}}

\def\Br{\mathrm{Br}} 
\def\chr{\mathrm{char}}
\def\codim{\mathrm{codim}}
\def\coker{\mathrm{coker}}
\def\dim{\mathrm{dim}}
\def\End{\mathrm{End}}
\def\foc{\mathfrak{foc}}
\def\Hom{\mathrm{Hom}}
\def\IBr{\mathrm{IBr}} 
\def\Im{\mathrm{Im}} 
\def\Ind{\mathrm{Ind}} 
\def\Irr{\mathrm{Irr}} 
\def\Id{\mathrm{Id}} 
\def\id{\mathrm{id}} 
\def\lcm{\mathrm{lcm}} 
\def\Aut{\mathrm{Aut}} 
\def\Inn{\mathrm{Inn}}
\def\Ker{\mathrm{Ker}} 
\def\mod{\mathrm{mod}} 
\def\modh{{\operatorname{mod-\!}}}
\def\Mod{\mathrm{Mod}} 
\def\Out{\mathrm{Out}}
\def\Pic{\mathrm{Pic}} \def\uPic{\underline{\mathrm{Pic}}}
\def\Piccent{\mathrm{Piccent}}
\def\rank{\mathrm{rank}}
\def\Res{\mathrm{Res}} 
\def\op{\mathrm{op}}
\def\rk{\mathrm{rk}} 
\def\SL{\mathrm{SL}}
\def\soc{\mathrm{soc}}
\def\Syl{\mathrm{Syl}} 
\def\Tr{\mathrm{Tr}} 
\def\tr{\mathrm{tr}} 
\def\Gal{\mathrm{Gal}} 
\def\ten{\otimes}
\def\tenA{\otimes_{A}} \def\tenB{\otimes_{B}}
\def\tenC{\otimes_{C}} \def\tenD{\otimes_{D}}
\def\tenO{\otimes_{\mathcal{O}}}
\def\tenOP{\otimes_{\mathcal{O}P}}
\def\tenOQ{\otimes_{\mathcal{O}Q}}
\def\tenOR{\otimes_{\mathcal{O}R}}
\def\tenK{\otimes_K}
\def\tenk{\otimes_k}
\def\tenkP{\otimes_{kP}}
\def\tenkR{\otimes_{kR}}

\def\C{{\mathbb C}}  
\def\F{{\mathbb F}}               
\def\Q{{\mathbb Q}}               
\def\Z{{\mathbb Z}}               
\def\Fp{{\mathbb F_p}}          
\def\Fq{{\mathbb F_q}}
\def\frakp{{\mathfrak{p}}}

\title[Picard groups of blocks] {On Picard groups of blocks of
finite groups}
\author{Robert Boltje, Radha Kessar, Markus Linckelmann}
\address{Department of Mathematics, City, University  of London 
EC1V 0HB, United Kingdom}
\email{radha.kessar.1@city.ac.uk}

\thanks{This project was initiated while the second and third authors visited the University of California, Santa Cruz, in Spring 2017. They would like to thank the Department of Mathematics  for its  support.  Further work was   supported by the National 
Science Foundation under Grant No. DMS-1440140 while the authors were in 
residence at the Mathematical Sciences Research Institute in Berkeley, 
California, during the  Spring  2018 semester. The third author 
acknowledges support from EPSRC grant EP/M02525X/1.}  

\begin{abstract}
We show that the subgroup of the Picard group of a $p$-block of a finite 
group given by bimodules with endopermutation sources modulo the 
automorphism group of a source algebra is determined locally in terms of 
the fusion system on a defect group. We show that the Picard group of a 
block over the a complete discrete valuation ring $\CO$ of 
characteristic zero with an algebraic closure $k$ of $\Fp$ as residue 
field is a colimit of finite Picard groups of blocks over $p$-adic 
subrings of $\CO$. We apply the results to blocks with an 
abelian defect group and Frobenius inertial quotient, and specialise
this further to blocks with cyclic or Klein four defect groups. 
\end{abstract}

\maketitle

\bigskip

\begin{center}
\today
\end{center}


\section{Introduction}     

Throughout the paper,  $p$  is  a prime number.
Let $\CO$ be a complete local principal ideal domain with residue
field $k$ of characteristic $p$. Assume that either $\CO=k$ or that
$\CO$ has characteristic zero. By a {\it block}  of $\OG$ for $G$ a 
finite group we mean a  primitive idempotent of the  center of the 
group algebra $\OG$. For $B$ an $\CO$-algebra, we denote by $\Pic(B)$
the {\it Picard group of} $B$; that is, $\Pic(B)$ is the group of 
isomorphism classes of $(B,B)$-bimodules inducing a Morita equivalence, 
with group product induced by the tensor product over $B$. 
If $B$ is symmetric (that is, $B$ is free of finite rank as an 
$\CO$-module and $B$ is isomorphic to its $\CO$-dual $B^*$ as a 
$(B,B)$-bimodule) and if $M$ is a $(B,B)$-bimodule inducing a Morita 
equivalence, then the inverse of its isomorphism class $[M]$ in 
$\Pic(B)$ is the class $[M^*]$ of the $\CO$-dual $M^*$ of $M$. 
If $G$ is a finite group, then $\OG$ and the block algebras of $\OG$ 
are symmetric. 

Let $G$, $H$ be finite groups, and let $b$, $c$ be blocks of $\OG$, 
$\OH$, respectively. We say  that a Morita equivalence between $\OGb$ 
and $\OHc$ {\it has endopermutation source} if it is given by  an 
$(\OGb,\OHc)$-bimodule $M$ which  has an endopermutation $\OR$-module 
as a source for some vertex $R$ of $M$, where $M$ is regarded as an 
$\CO(G\times H)$-module.
By \cite[7.4, 7.6]{Puigbook}, a Morita equivalence between $\OGb$ and 
$\OHc$ with endopermutation source  induces an isomorphism 
$\alpha : P\cong$ $Q$ between defect groups $P$ and $Q$ of $b$ and $c$, 
respectively, such that the twisted diagonal subgroup $\Delta\alpha = $ 
$\{(x,\alpha(x))\ |\ x\in P\}$  is a vertex of $M$. Moreover, $\alpha$ 
induces an isomorphism between fusion systems of $b$ and of $c$ on $P$ 
and $Q$, respectively (for some suitable choice of maximal Brauer 
pairs). Set $B=$ $\OGb$. By \cite[Proposition 9]{ZhouYY}, the Morita 
equivalences on $B$ with endopermutation source form a subgroup of 
$\Pic(B)$, which we will denote by $\CE(B)$. We denote by 
$\CL(B)$ the subgroup of $\CE(B)$ of Morita equivalences 
given by bimodules with linear source, and by $\CT(B)$ the 
subgroup of $\CL(B)$ of Morita equivalences given by bimodules
with trivial source (the fact that these are subgroups follows from
\ref{Vvarphi-comp} below). If $A$ is a source algebra of a block
with defect group $P$, we denote by $\Aut_P(A)$ the group
of algebra automorphisms of $A$ which fix the image of $P$ in
$A$ elementwise, and by $\Out_P(A)$ the quotient of 
$\Aut_P(A)$ by the subgroup of inner automorphisms induced
by conjugation with elements in $(A^P)^\times$.

Let $P$ be a finite $p$-group and $V$ an endopermutation
$\OP$-module having an indecomposable direct summand with 
vertex $P$. By results of Dade \cite{Dadeendo}, the indecomposable 
direct summands of $V$ with vertex $P$ are all isomorphic. For
any subgroup $Q$ of $P$, denote by $V_Q$ an indecomposable
direct summand of $\Res^P_Q(V)$ with vertex $Q$. The tensor
product of two indecomposable endopermutation $\OP$-modules with vertex 
$P$ has an indecomposable direct summand with vertex $P$; this 
induces an abelian group structure on the set of isomorphism
classes of indecomposable endopermutation $\OP$-modules.
The resulting group is denoted $D_\CO(P)$, called the {\it
Dade group of $P$ over $\CO$}.
 
Let $\CF$ be a saturated fusion system on $P$. We denote by
$\foc(\CF)$ the subgroup of $P$ generated by elements of the
form $\varphi(x)x^{-1}$, where $x\in$ $P$ and $\varphi\in$
$\Hom_\CF(\langle x\rangle, P)$.  Slightly modifying the terminology in  
\cite[3.3]{LiMa} we say that an endopermutation $\OP$-module $V$ having
an indecomposable direct summand with vertex $P$ is {\it $\CF$-stable} 
if for every isomorphism $\varphi : Q\to$ $R$ in $\CF$ between two 
subgroups $Q$, $R$ of $P$ we have $V_Q\cong$ ${_\varphi{V_R}}$. (In 
\cite[3.3]{LiMa} this would be the definition of $\CF$-stability for 
the class of $V$ in $D(P)$). Here ${_\varphi{V_R}}$ is the $\OQ$-module 
which is equal to $V_R$ as an $\CO$-module, with $x\in$ $Q$ acting as 
$\varphi(x)$ on $V_R$. The isomorphism classes of $\CF$-stable 
indecomposable endopermutation $\OP$-modules with vertex $P$ form a 
subgroup of $D(P)$, denoted $D(P,\CF)$. See \cite{LiMa} for more 
details. The fusion stable linear characters of $P$ form a subgroup of 
$D_\CO(P,\CF)$ which we identify with the group 
$\Hom(P/\foc(\CF),\CO^\times)$. We set 
$\Out_\CF(P)=$ $\Aut_\CF(P)/\Inn(P)$. Following the notation introduced 
in \cite[1.13]{AOV}, we denote by $\Aut(P,\CF)$ the group of 
automorphisms of $P$ which stabilise $\CF$ and  set $\Out(P,\CF)=$ 
$\Aut(P, \CF)/\Aut_\CF(P)$. The group $\Aut(P)$ acts on $D_\CO(P)$, with 
$\Inn(P)$ acting trivially. This action restricts to an action of 
$\Aut(P,\CF)$ on the subgroup $D_\CO(P,\CF)$ of $D_\CO(P)$, with 
$\Aut_\CF(P)$ acting trivially on that subgroup, and hence inducing an 
action of $\Out(P,\CF)$ on $D_\CO(P,\CF)$ and its torsion subgroup 
$D^t_\CO(P,\CF)$.  Again by \cite[7.6]{Puigbook}, Morita equivalences 
with an endopermutation source have sources which are stable with 
respect to the involved fusion systems. The assumption that the 
residue field $k$ is large enough in the statements below implies that 
the fusion systems of the blocks are saturated.

\begin{thm}  \label{thm:endoMorita}
Let $G$ be a finite group and $\CO$ be a complete local principal ideal 
domain with residue field $k$ of characteristic $p$ such that $k$ is 
large enough for all subgroups of $G$.  
Let $b$ a block of $\OG$ with defect group $P$ 
and source idempotent $i$ in $(\OGb)^P$. Set $A=$ $i\OG i$ and $B=$ 
$\OGb$. Denote by $\CF$ the fusion system on $P$ determined by $A$.

\begin{enumerate}
\item[{\rm (i)}]
Let $M$ be a $(B,B)$-bimodule which induces a Morita equivalence and 
which has an endopermutation module as a source. Then $M$ is isomorphic 
to a direct summand of 
$$\OG i\tenOP \Ind^{P\times P}_{\Delta\varphi}(V)\tenOP i\OG$$
for some $\varphi\in$ $\Aut(P,\CF)$, and some $\CF$-stable 
indecomposable endopermutation $\OP$-module $V$, regarded as an
$\CO\Delta\varphi$-module via the isomorphism $P\cong$ $\Delta\varphi$
sending $x\in$ $P$ to $(x,\varphi(x))$.

\item[{\rm (ii)}]
The correspondence sending $M$ to the pair $(V,\varphi)$ induces a 
group homomorphism $\Phi$ making the following diagram 
of groups commutative with exact rows:
$$\xymatrix{
1 \ar[r] & \Out_P(A) \ar[r] \ar@{=}[d]& \CE(B) \ar[rr]^(0.33){\Phi} & 
&  D_\CO(P,\CF) \rtimes \Out(P,\CF) \\
1 \ar[r] & \Out_P(A) \ar[r] \ar@{=}[d] & \CL(B) \ar[rr]^(0.33){\Phi} \ar[u] & 
&  \Hom(P/\foc(P),\CO^\times) \rtimes \Out(P,\CF) \ar[u] \\
1 \ar[r] & \Out_P(A) \ar[r] & \CT(B) \ar[rr]^(0.33){\Phi} \ar[u]  &
&  \Out(P,\CF) \ar[u] \\
}$$
where the upwards arrows are the inclusions. 

\item[{\rm (iii)}]
The group $\CE(B)$ is finite. Moreover, if $\Phi$ maps $\CT(B)$ onto 
$\Out(P,\CF)$, then $\Phi$ maps $\CE(B)$ to the finite group 
$D^t_\CO(P,\CF)\rtimes\Out(P,\CF)$.
\end{enumerate}
\end{thm}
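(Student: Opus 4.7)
First, we observe that the target group $D^t_\CO(P,\CF) \rtimes \Out(P,\CF)$ is finite. Indeed, $\Out(P,\CF)$ is a subquotient of the finite group $\Aut(P)$, and $D^t_\CO(P,\CF)$ is a subgroup of the torsion subgroup $D^t_\CO(P)$ of the Dade group, which is finite by the known structure theorems for Dade groups of finite $p$-groups. The action of $\Out(P,\CF)$ on $D_\CO(P,\CF)$ preserves the torsion subgroup, so the semidirect product is well-defined and finite. From the exact sequence in (ii), $\CE(B)$ is an extension with finite kernel $\Out_P(A)$ (finite by classical source-algebra theory), so finiteness of $\CE(B)$ reduces to finiteness of $\Phi(\CE(B))$.

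The core of the proof is the following key claim: if $[M] \in \CE(B)$ satisfies $\Phi([M]) = (V, 1)$, i.e.\ has trivial $\Out$-part, then $V \in D^t_\CO(P,\CF)$. Granted this, the intersection $\Phi(\CE(B)) \cap D_\CO(P,\CF) \subseteq D^t_\CO(P,\CF)$ is finite; combined with the finiteness of the projection of $\Phi(\CE(B))$ to $\Out(P,\CF)$, we conclude $\Phi(\CE(B))$ is finite, and hence so is $\CE(B)$.

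To prove the key claim, I would work through the source algebra. Such an $[M]$ corresponds via the Morita equivalence $B \sim A$ to an invertible $(A,A)$-bimodule $iMi$ with endopermutation source $V$ of vertex $\Delta P$. The invertibility of $iMi$ as an $A$-bimodule, together with the interior $P$-algebra structure on $A$ coming from its role as a source algebra of a block, should constrain the Dade $P$-algebra $\End_\CO(V)$ so that its class in $D_\CO(P,\CF)$ lies in the torsion subgroup. Carrying this translation out rigorously, via Puig's theory of Dade $P$-algebras and source-algebra Morita equivalences, is in my view the main obstacle in the proof.

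For the moreover clause, assume $\Phi(\CT(B)) = \Out(P,\CF)$. Given $[M] \in \CE(B)$ with $\Phi([M]) = (V, \psi)$, choose $[N] \in \CT(B)$ with $\Phi([N]) = (1, \psi)$; this is possible because by (ii) the image $\Phi(\CT(B))$ lies in $\Out(P,\CF)$, regarded as $\{1\} \rtimes \Out(P,\CF) \subseteq D^t_\CO(P,\CF) \rtimes \Out(P,\CF)$, so the Dade component of $\Phi([N])$ is trivial. A direct computation in the semidirect product yields $\Phi([M] \cdot [N]^{-1}) = (V, 1)$, and by the key claim $V \in D^t_\CO(P,\CF)$. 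Hence $\Phi([M]) = (V, \psi) \in D^t_\CO(P,\CF) \rtimes \Out(P,\CF)$, as required.
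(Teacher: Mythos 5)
Your proposal addresses only part (iii) and takes parts (i) and (ii) as given; but even granting that, the argument for (iii) has a genuine gap at its centre. Everything in your plan rests on the ``key claim'' that any $[M]\in\CE(B)$ with $\Phi([M])=(V,1)$ has torsion Dade class, and you explicitly leave this unproved, gesturing at Puig's theory of Dade $P$-algebras as ``the main obstacle.'' That obstacle is not overcome, and I know of no direct argument of the kind you sketch. The paper's logic runs in the opposite direction: it first proves finiteness of $\CE(B)$ by a counting argument --- the $\CO$-rank of a source of an invertible $(B,B)$-bimodule is bounded in terms of $B$, and by the classification of endopermutation modules (Th\'evenaz's survey, Theorems 13.2 and 14.2) the indecomposable endopermutation $\OP$-modules are defined over a fixed finite field and lift to its Witt vectors, so only finitely many isomorphism classes of sources can occur; since there are only finitely many indecomposables with a fixed vertex-source pair, $\CE(B)$ is finite. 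Your key claim is then an immediate \emph{consequence}: $(V,1)$ has finite order in $D_\CO(P,\CF)\rtimes\Out(P,\CF)$, and its powers are $(n[V],1)$, so $[V]\in D^t_\CO(P,\CF)$. By trying to derive finiteness \emph{from} the torsion statement you have inverted the dependence and stalled exactly where the real work lies. (Your reduction of the ``moreover'' clause, choosing $[N]\in\CT(B)$ with $\Phi([N])=(1,\psi)$ and passing to $[M][N]^{-1}$, is the same as the paper's; it just needs finiteness of $\CE(B)$ as input rather than the unproved claim.)

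Separately, parts (i) and (ii) are not free. Part (i) is Puig's theorem on vertices and sources of bimodules inducing stable equivalences (Lemma \ref{Vvarphi-existence}); for (ii) one must show $\Phi$ is well defined (the pair $(V,\varphi)$ is unique up to $\Aut_\CF(P)$ and isomorphism, Lemma \ref{Vvarphi-uniqueness}), that it is a homomorphism (the tensor product of two such bimodules has the expected vertex and source, Lemma \ref{Vvarphi-comp}), and that $\ker\Phi$ is exactly the image of $\Out_P(A)$, which uses the Scott--Puig identification of invertible trivial-source bimodules with diagonal vertex $\Delta P$ as summands of $\OG i\tenOP i\OG$. A complete proof must supply these steps.
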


This will be proved in Section \ref{endoMoritaSection}. 

\begin{rem} \label{PicRemark}
\mbox{}

\smallskip\noindent {\bf (a)}
By a result of Puig \cite[14.9]{Puigmodules}, the group $\Out_P(A)$ is 
canonically isomorphic to a subgroup of the finite abelian $p'$-group 
$\Hom(E,k^\times)$, where $E\cong$ $\Out_\CF(P)$ is the inertial 
quotient of $b$. 

\smallskip\noindent {\bf (b)}
We have a canonical embedding $\Out_P(A)\to$ $\Pic(B)$ given by
the correspondence sending $\alpha\in$ $\Aut_P(A)$ to the
$(B,B)$-bimodule $\OG i_\alpha \tenA i\OG$.
By a result due independently to Scott \cite{Scottnotes} and Puig 
\cite{Puigbook}, the image of the group $\Out_P(A)$ in $\Pic(B)$ 
under this embedding is the subgroup of $\Pic(B)$ given by invertible
bimodules which are summands of $\OG i\tenOP i\OG$; that is, invertible
trivial source bimodules with diagonal vertex $\Delta P$ arising in 
statement (i) with $V=\CO$ and $\varphi=$ $\Id_P$. See for instance 
\cite[4.1]{Lisplendid} for a proof of this fact (which we will use 
repeatedly). 

\smallskip\noindent {\bf (c)}
All groups except possibly  $D_\CO(P,\CF)$ in the above diagram are 
finite.  The image of $\CE(B)$ under $\Phi$ is not known in general, 
except in some special cases, including blocks with cyclic or Klein 
four defect groups; see the theorems \ref{endoMoritacyclic} and 
\ref{endoMoritaKlein} below. 

\smallskip\noindent {\bf (d)}
By \cite[Theorem 1.1]{Linfoc}, if $\CO$ contains a primitive $|G|$-th 
root of unity, then  we have a canonical isomorphism 
$$\CL(B) \cong \CT(B) \times \Hom(P/\foc(\CF), \CO^\times)$$
compatible with the inclusion $\CT(B)\to$ $\CL(B)$ and $\Phi$. The 
subgroup of $\CL(B)$ isomorphic to $\Hom(P/\foc(\CF),\CO^\times)$ is 
equal to the intersection of $\ker(\Pic(B)\to\Pic(k\tenO B))$ and 
$\CL(B)$, and $\Phi$ restricts to the identity on this subgroup.

\smallskip\noindent {\bf (e)}
Theorem \ref{thm:endoMorita} applies to $\CO=k$. In general, the
canonical map $\CE(B)\to$ $\CE(k\tenO B)$ is surjective (by
\cite[Theorem 1.13]{KeLichar}), and its kernel is
$\Hom(P/\foc(\CF),\CO^\times)$. 

\smallskip\noindent {\bf (f)}
If $P$ is abelian, then $\Out(P,\CF)=$ $N_{\Aut(P)}(E)/E$, where
$E=$ $\Aut_\CF(P)$. We will use this identification in the statement
of the next result.
\end{rem}

Combining results of Puig \cite{Puabelien}, Zhou \cite{ZhouYY}, 
Hertweck and Kimmerle \cite{HeKi}, and Carlson and Rouquier \cite{CaRo}  
yields the following result. Unlike Theorem \ref{thm:endoMorita}, the
remaining results in this section require $\CO$ to have characteristic
zero.

\begin{thm} \label{FrobeniusPicard}
Let $G$ be a finite group. Suppose that $\chr(\CO)=0$ and that the  
residue field $k$ is a splitting field for the subgroups of $G$.  
Let $b$ a block of $\OG$ with a nontrivial abelian defect group $P$. 
Set $B=$ $\OGb$, let $i$ be a source idempotent in $B^P$, and set 
$A=$ $iBi$. Denote by $E$ the inertial quotient of of $B$ associated
with the choice of $i$, regarded as a subgroup of $\Aut(P)$, and 
suppose that $E$ is nontrivial cyclic and acts freely on 
$P\setminus \{1\}$. Set $N_E=$ $N_{\Aut(P)}(E)/E$. 
We have $\Pic(B) = \CE(B)$, and there is  an injective 
group homomorphism  $\Psi : \Pic(B) \to$ 
$\Hom(E,k^\times) \rtimes N_E$ 
which makes the following diagram with exact rows commutative:
$$\xymatrix{1 \ar[r] & \Out_P(A) \ar[r] \ar[d] 
& \Pic(B) \ar[r]^{\Phi} \ar[d]^{\Psi} & 
D_\CO(P,\CF)\rtimes N_E \ar[d] & \\
1 \ar[r] & \Hom(E,k^\times) \ar[r] 
& \Hom(E,k^\times) \rtimes N_E \ar[r] 
& N_E \ar[r] & 1}$$ 
where the top row is from Theorem \ref{thm:endoMorita}, the bottom row 
is the canonical exact sequence, the left vertical arrow is the 
canonical embedding, and the right vertical map is the canonical 
surjection with kernel $D_\CO(P,\CF)$. 
\end{thm}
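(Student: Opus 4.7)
The plan is to combine Puig's structure theorem for source algebras of blocks with abelian defect and Frobenius inertial quotient with Theorem \ref{thm:endoMorita}, while controlling the image of the resulting map $\Phi$ via Carlson--Rouquier's computation of endotrivial modules and Hertweck--Kimmerle's rigidity results.

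First I would identify the source algebra. By \cite{Puabelien}, under the present hypotheses the source algebra $A=iBi$ is isomorphic as an interior $P$-algebra to a twisted group algebra $\CO_\omega(P\rtimes E)$ for some $\omega\in H^2(E,\CO^\times)$; because $E$ is cyclic, $\omega$ is cohomologically trivial, so in fact $A\cong\OPE$. The Morita equivalence $B\simeq \OPE$ reduces the Picard-group computation for $B$ to that for $\OPE$.

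The next step, and the main obstacle, is to prove $\Pic(B)=\CE(B)$. Using the identification of $A$, it suffices to show that every Morita self-equivalence of $\OPE$ is given by a bimodule with endopermutation source. Carlson and Rouquier \cite{CaRo}, applied to the Frobenius-type group $P\rtimes E$, imply that the $\CF$-stable endopermutation modules controlling such sources are captured by linear characters of $E$; combined with Hertweck and Kimmerle's identification \cite{HeKi} of the relevant outer automorphisms of $\CO(P\rtimes E)$, and Zhou's observation \cite[Proposition 9]{ZhouYY} that endopermutation-source Morita equivalences form a subgroup, this yields $\Pic(\OPE)=\CE(\OPE)$, and hence $\Pic(B)=\CE(B)$ by Morita invariance.

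With $\Pic(B)=\CE(B)$, Theorem \ref{thm:endoMorita}(ii) produces $\Phi\colon \Pic(B)\to D_\CO(P,\CF)\rtimes\Out(P,\CF)$, and Remark \ref{PicRemark}(f) identifies $\Out(P,\CF)$ with $N_E$. Because the sources appearing in the previous step arise as restrictions of linear characters of $P\rtimes E$, they realise the canonical embedding of $\Hom(E,k^\times)$ into $D_\CO(P,\CF)$; hence $\Phi(\Pic(B))\subset\Hom(E,k^\times)\rtimes N_E$, and I define $\Psi$ to be $\Phi$ with this sharpened codomain. Commutativity of the right square is automatic, since both vertical maps to $N_E$ record the fusion-theoretic outer automorphism of $P$ induced by an invertible bimodule; commutativity of the left square is exactly the content of Puig's embedding $\Out_P(A)\hookrightarrow\Hom(E,k^\times)$ of \cite[14.9]{Puigmodules}. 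Finally, injectivity of $\Psi$ reduces, via Theorem \ref{thm:endoMorita}(ii), to the injectivity of that embedding on $\Out_P(A)$, which is known.
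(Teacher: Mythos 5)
Your first step contains a fatal error that undermines the entire argument. Puig's theorem in \cite{Puabelien} (restated as Theorem \ref{frobeniusstableequivalence} in the paper) does \emph{not} say that the source algebra $A$ is a twisted group algebra of $P\rtimes E$, nor that $B$ is Morita equivalent to $\OPE$. It produces only a \emph{stable equivalence of Morita type} between $B$ and $\OL$, given by a bimodule with endopermutation source $V$ and vertex $\Delta P$. In general this is strictly weaker: the principal block of $\CO A_5$ at $p=2$ has Klein four defect group $P$ and Frobenius inertial quotient $E\cong C_3$, but it is not Morita equivalent to $\CO A_4=\OPE$ (the simple module dimensions are $1,2,2$ versus $1,1,1$). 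So the reduction ``$\Pic(B)\cong\Pic(\OPE)$'' on which the rest of your argument rests is false.

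Because of this, two substantive steps are missing. First, you have no mechanism to transport information between $\Pic(B)$ and $\Pic(\OL)$; the paper does this via the induced isomorphism on stable Picard groups $\uPic(B)\cong\uPic(\OL)$ together with Lemma \ref{Vvarphi-conj}, and then the Carlson--Rouquier decomposition $\uPic(\OL)=\Pic(\OL)\cdot\langle\Omega(\OL)\rangle$ (Proposition \ref{uPicFrobenius}) to show $\uPic(\OL)=\uCE(\OL)$ and hence $\Pic(B)=\CE(B)$. Second, and more delicate, one must show that the image of $\Pic(B)$ in $\uPic(\OL)$ actually lands inside $\Pic(\OL)$ (rather than hitting a nonzero power of $\Omega(\OL)$); the paper needs a genuinely different argument for $P$ noncyclic (where $\langle\Omega(\OL)\rangle$ is infinite and central, so any finite subgroup is contained in $\Pic(\OL)$) and for $P$ cyclic (where $\Omega(\OL)$ has finite order and one must invoke Proposition \ref{OmegaMorita}/Corollary \ref{OmegaMoritaCor}). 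Your proposal does not address either of these points, and they do not reduce to the injectivity of $\Out_P(A)\hookrightarrow\Hom(E,k^\times)$ that you cite at the end.
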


There is no loss of generality in assuming that $E$ is nontrivial: 
if $E$ is trivial, then $B$ is nilpotent, hence Morita equivalent to
$\OP$. See the Example \ref{nilpotentEx} below for more details. 
Note that the image of $\Phi$ need not be contained in the group
$N_E=$ $N_{\Aut(P)}(E)/E$ since $\CE(B)$ need not be equal to 
$\CT(B)$. Theorem \ref{FrobeniusPicard}  will be proved in Section 
\ref{FrobSection}.

For the sake of completeness, we describe the Picard groups for blocks 
with a cyclic or Klein four defect group over $\CO$ with $\chr(\CO)=0$, 
since these two cases have some additional properties. This is for the 
most part well-known and a combination of various results in the 
literature, such as \cite{RoSc87}, \cite{Weiss}, 
\cite[4.3, 5.6, 5.8]{Licyclic}, \cite[1.1]{Likleinfour}, 
\cite[1.1]{CEKL}, \cite[\S 11.4]{LiKoZi}. In particular, in both of 
these cases, if $B$ is not nilpotent, then $\Pic(B)$ is equal to 
$\CT(B)$, and hence $\Im(\Phi)$ is contained in $N_E$, where the 
notation is as in Theorem \ref{FrobeniusPicard}. 
Furthermore, if $P$ is cyclic, then $\Aut(P)$ is abelian, and 
hence the semidirect product in the last statement of Theorem 
\ref{FrobeniusPicard} becomes a direct product.

\begin{thm} \label{endoMoritacyclic}
Let $G$ be a finite group. Suppose that $\chr(\CO)=0$ and that the  
residue field $k$ is a splitting field for the subgroups of $G$.  
Let $b$ a block of $\OG$ with a nontrivial cyclic defect group $P$. 
Set $B=$ $\OGb$, let $i$ be a source idempotent in $B^P$, and set 
$A=$ $iBi$. Denote by $E$ the associated inertial quotient of $B$, and
suppose that $E$ is nontrivial. Let  $M$ be an $(B,B)$-bimodule inducing 
a Morita equivalence. Then $M$ is a trivial source module. We have 
$$\Pic(B)= \CT(B) \cong \Out_P(A) \times \Aut(P)/E\ ,$$
the group $\Out_P(A)$ is cyclic of order dividing $|E|$, with generator
the isomorphism class $\Omega^n_{B\tenO B^\op}(B)$, where $n$ is the
smallest positive integer such that this bimodule induces a Morita
equivalence.
\end{thm}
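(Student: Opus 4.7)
The plan is to combine Theorem \ref{thm:endoMorita} with splendid Morita equivalence theory for blocks with cyclic defect groups. My first step is to show that every bimodule $M \in \Pic(B)$ has trivial source, so that $\Pic(B) = \CE(B) = \CT(B)$. For cyclic defect this is essentially due to Rickard and Linckelmann (see \cite[4.3, 5.6, 5.8]{Licyclic}, cf.\ \cite{RoSc87}, \cite{Weiss}): the source algebra $A$ is Puig equivalent to a Brauer tree algebra, all Morita equivalences between such algebras lift to splendid ones, and so every $M \in \Pic(B)$ corresponds under the source algebra equivalence to a bimodule with trivial source.

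With $\Pic(B) = \CT(B)$ in hand, I apply the bottom row of Theorem \ref{thm:endoMorita}(ii). Since $P$ is cyclic, $\Aut(P)$ is abelian and $\Out(P,\CF) = N_{\Aut(P)}(E)/E = \Aut(P)/E$. The resulting sequence
\begin{equation*}
1 \longrightarrow \Out_P(A) \longrightarrow \Pic(B) \xrightarrow{\ \Phi\ } \Aut(P)/E \longrightarrow 1
\end{equation*}
is shown to be exact on the right by lifting each $\varphi \in \Aut(P)$ to a splendid Morita self-equivalence of $B$, via source-algebra automorphisms of the Brauer correspondent. Abelianness of $\Aut(P)$ forces the conjugation action of $\Aut(P)/E$ on the subgroup $\Out_P(A) \hookrightarrow \Hom(E, k^\times)$ from Remark \ref{PicRemark}(a) to be trivial, and the same lifts provide a section of $\Phi$, so the split extension is the direct product $\Out_P(A) \times \Aut(P)/E$.

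Finally, by Remark \ref{PicRemark}(a), $\Out_P(A)$ embeds in $\Hom(E, k^\times)$, which is cyclic of order $|E|$ since $E$ is cyclic; hence $\Out_P(A)$ is cyclic of order dividing $|E|$. To identify its generator, recall from Remark \ref{PicRemark}(b) that $\Out_P(A)$ consists of the classes in $\Pic(B)$ of invertible bimodule summands of $\OG i \tenOP i \OG$. The Heller bimodule $\Omega_{B \tenO B^\op}(B)$ induces a stable autoequivalence of $B$, and combining its known periodicity on the Brauer tree algebra $A$ (cf.\ \cite[\S 11.4]{LiKoZi}) with the trivial-source structure of the relevant bimodules, one checks that $\Omega^n_{B \tenO B^\op}(B)$ first becomes invertible in $\Pic(B)$ at some minimal positive $n$ dividing $|E|$, and that its class then generates $\Out_P(A)$. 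This last identification, which rests on the explicit Brauer tree combinatorics of $B$, is the main obstacle; the rest follows essentially formally from Theorem \ref{thm:endoMorita} together with classical results on blocks with cyclic defect groups.
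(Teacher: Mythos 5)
Your overall outline matches the paper's: reduce to the subgroup of Morita self-equivalences that stabilise the simple modules, apply \cite[4.3, 5.6, 5.8]{Licyclic} to identify that subgroup with $\Aut(P)/E$ via trivial-source bimodules, and identify $\Out_P(A)$ with the classes $\Omega^n_{B\tenO B^\op}(B)$. But there is a genuine gap in your first and decisive step. The assertion that ``all Morita equivalences between such algebras lift to splendid ones'' is not something you can cite; the known results give the \emph{existence} of some splendid (trivial source) Morita equivalence between Morita equivalent cyclic blocks, not that \emph{every} invertible bimodule has trivial source --- the latter is exactly the statement being proved, and indeed the paper derives the cited remark as a \emph{consequence} of this theorem. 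What you actually get for free (from Theorem \ref{FrobeniusPicard}, since a nontrivial $p'$-automorphism group of a cyclic $p$-group acts freely) is only $\Pic(B)=\CE(B)$, i.e.\ endopermutation source.

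The missing idea is the content of Proposition \ref{OmegaMorita} and Corollary \ref{OmegaMoritaCor} in the paper: one first shifts $M$ by some Heller power $\Omega^n_{B\tenO B^\op}$ so that the resulting equivalence stabilises all simple $k\tenO B$-modules (this is \cite[5.1]{Licyclic}), and one must then prove that this $n$ is \emph{even}. This matters because $\Omega^n_{B\tenO B^\op}(B)$ has source $\Omega^n_P(\CO)$, which for $|P|\geq 3$ is trivial only when $n$ is even; if $n$ could be odd you would only conclude $\Pic(B)=\CL(B)$ or worse, not $\Pic(B)=\CT(B)$, and the generator of $\Out_P(A)$ would fail to be a trivial source bimodule. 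The paper's proof of evenness is not formal: it uses that the permutation of the edges of the Brauer tree induced by a Morita self-equivalence is a tree automorphism fixing a vertex (\cite[7.2]{KeLichar}, using $|P|\geq 3$), together with the period $2|E|$ of the uniserial modules $U_i$, $V_i$. Your appeal to ``known periodicity'' in the last paragraph does not supply this parity argument. The remaining points --- $\Out(P,\CF)=\Aut(P)/E$ by abelianness of $\Aut(P)$, the order of $\Out_P(A)$ dividing $|E|$ via the embedding into $\Hom(E,k^\times)$, surjectivity of $\Phi$ onto $\Aut(P)/E$ via automorphisms extending group automorphisms of $P$, and the splitting as a direct product (the paper gets centrality of $\Out_P(A)$ directly from $\Omega^n(B)\tenB M\cong M\tenB\Omega^n(B)$ rather than from equivariance of the embedding into $\Hom(E,k^\times)$) --- are all sound and consistent with the paper.
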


The fusion system $\CF$ on $P$ determined by $i$ in the above theorem
is equal to that of the group $P\rtimes E$. Since the automorphism
group of $P$ is abelian, it follows that $\Out(P,\CF)=$ $\Aut(P)/E$.
If $\CO$ has characteristic zero and if two blocks with cyclic defect 
groups are Morita equivalent, then there is a Morita equivalence with 
endopermutation source, and hence Theorem \ref{endoMoritacyclic} 
implies that any Morita equivalence between blocks over $\CO$ with 
cyclic defect groups has endopermutation source.  We have similar 
results for Klein four defect groups.

\begin{thm} \label{endoMoritaKlein}
Let $G$ be a finite group.
Suppose that $p=2$, that $\chr(\CO)=$ $0$, and that the residue field 
$k$ is a splitting field for the subgroups of $G$. Let $b$ a block of 
$\OG$ with a Klein four defect group $P$. Set $B=$ $\OGb$, and suppose
that $B$ is not nilpotent. Let  $M$ be a $(B,B)$-bimodule inducing a  
Morita equivalence. Then $M$ is a trivial source module. 
If $B$ is Morita equivalent to $\CO A_4$, then  
$$\Pic(B) = \CT(B) \cong S_3\ ,$$
if $B$ is Morita equivalent to the principal block of $\CO A_5$, then  
$$\Pic(B) = \CT(B) \cong C_2\ ,$$
\end{thm}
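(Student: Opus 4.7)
The plan is to combine three ingredients: the classification of Morita equivalence classes of $2$-blocks with Klein four defect group, Theorem~\ref{FrobeniusPicard} applied to the inertial quotient $E\cong C_3$, and explicit identification of the source algebra invariant $\Out_P(A)$ in each of the two non-nilpotent cases.

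By the classification completed in \cite[1.1]{CEKL} (building on \cite[1.1]{Likleinfour}), every non-nilpotent $2$-block with defect group $V_4$ is Morita equivalent over $\CO$ either to $\CO A_4$ or to the principal block $B_0(\CO A_5)$. Since the Picard group is a Morita invariant, it suffices to determine $\Pic(\CO A_4)$ and $\Pic(B_0(\CO A_5))$. In both cases the fusion system $\CF$ on $P=V_4$ is that of $A_4$: $E=\Aut_\CF(P)\cong C_3$ acts freely on $P\setminus\{1\}$, so $\foc(\CF)=P$ and $N_E=N_{\Aut(P)}(E)/E=S_3/C_3\cong C_2$. Theorem~\ref{FrobeniusPicard} then embeds $\Pic(B)=\CE(B)$ into $\Hom(E,k^\times)\rtimes N_E\cong C_3\rtimes C_2\cong S_3$, so $|\Pic(B)|\le 6$. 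In both cases the outer automorphism of $A_4$ from $S_4$ (respectively of $A_5$ from $S_5$) is an algebra automorphism of the block and gives a trivial source class in $\Pic(B)$ mapping onto the nontrivial element of $N_E$, so $|\Pic(B)|\ge 2$.

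For $B=\CO A_4$ the source algebra is $A=\CO A_4$ itself and $V_4\rtimes C_3$ splits, so there is no Külshammer--Puig obstruction and an explicit non-inner element of $\Aut_P(A)$ can be written down by rescaling a fixed generator of the $C_3$-complement by a primitive cube root of unity in $\CO^\times$. This realises $\Out_P(A)\cong \Hom(E,k^\times)\cong C_3$, which via Remark~\ref{PicRemark}(b) embeds in $\Pic(\CO A_4)$ by trivial source bimodules. Together with the $S_4$-contribution this gives a subgroup of $\Pic(\CO A_4)$ of order $6$, matching the upper bound from Theorem~\ref{FrobeniusPicard}; hence $\Pic(\CO A_4)=\CT(\CO A_4)\cong S_3$.

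The remaining task, which is where I expect the main obstacle to lie, is to show $\Out_P(A)=1$ for the source algebra $A$ of $B_0(\CO A_5)$. The point is that the $C_3$-rescaling used for $\CO A_4$ fails to be compatible with the interior $P$-algebra structure of the $A_5$ source algebra, which encodes the $5$-local data; I would pin this down by invoking the explicit source algebra description given in \cite[1.1]{Likleinfour}. Once $\Out_P(A)=1$ is established, Theorem~\ref{FrobeniusPicard} squeezes $\Pic(B_0(\CO A_5))$ into $N_E\cong C_2$, and equality comes from the $S_5$-automorphism, yielding $\Pic(B_0(\CO A_5))=\CT(B_0(\CO A_5))\cong C_2$. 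The trivial source conclusion in both cases is then automatic since every class in $\Pic(B)$ is represented by one of the explicitly trivial source bimodules produced above.
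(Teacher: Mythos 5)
Your structure is essentially the same as the paper's: reduce via the classification to the two model algebras $\CO A_4$ and $B_0(\CO A_5)$, bound $\Pic(B)$ from above by the $\Hom(E,k^\times)\rtimes N_E\cong S_3$ of Theorem~\ref{FrobeniusPicard}, bound it from below by the visible group automorphisms from $S_4$ resp.\ $S_5$, and locate the gap at $\Out_P(A)$. The $\CO A_4$ case goes through exactly as you say (the paper cites Proposition~\ref{Piclocal}, but your explicit rescaling automorphism is the same content). One small point you should tighten: to pass from ``Morita equivalent to one of the two models'' to ``may assume $B$ is one of the two models'' you need the Morita equivalence to preserve $\CT$ and $\CL$, which the paper justifies by noting that the source of the equivalence in \cite[1.1]{Likleinfour} is a Heller translate of $\CO$ and invoking Lemma~\ref{Vvarphi-conj}; Morita invariance of $\Pic$ alone gives $\Pic(B)\cong\Pic(\CO A_4)$ but not $\CT(B)\cong\CT(\CO A_4)$.

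However, there is a genuine gap at the crucial step $\Out_P(A)=1$ for $B=B_0(\CO A_5)$. You correctly identify this as the main obstacle, but you do not prove it --- ``I would pin this down by invoking the explicit source algebra description'' is a pointer, not an argument, and it is not clear how a direct inspection of the source algebra as an interior $P$-algebra would rule out these automorphisms without a substantial computation. The paper's argument is different and more indirect: it uses the stable equivalence of Morita type between $B$ and $\CO A_4$ (induction/restriction, a special case of Theorem~\ref{frobeniusstableequivalence}) together with a counting-of-ranks obstruction. Namely, if a nontrivial $\zeta$-rescaling automorphism of $\CO A_4$ extended to $B$, then the three trivial source $B$-modules corresponding under the stable equivalence to the three linear characters of $A_4$ would be transitively permuted, hence all of the same $\CO$-rank. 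But the trivial character of $A_4$ corresponds to the trivial $B$-module of rank $1$, while the two nontrivial linear characters of $A_4$ correspond to $B$-modules of rank greater than $1$ because $A_5$ has no nontrivial degree-one character. This contradiction forces $\Out_P(B)=1$. Without such an argument, your proof of the $A_5$ case is incomplete.
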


In Theorem \ref{endoMoritaKlein}, since $B$ is not nilpotent, the fusion 
system $\CF$ of $B$ on $P$ is that of the group $A_4\cong$ 
$P\rtimes C_3$, and $\Out(P,\CF)\cong$ $S_3/C_3\cong$ $C_2$. Using the 
classification of finite simple groups via \cite{CEKL}, it follows that 
every Morita equivalence between two non-nilpotent blocks over $\CO$ 
with Klein four defect groups has trivial source, and any Morita 
equivalence between two nilpotent blocks with Klein four defect groups 
has linear source (and there cannot be a Morita equivalence between a 
nilpotent and non-nilpotent block with Klein four defect groups). 
The theorems \ref{endoMoritacyclic} and \ref{endoMoritaKlein}  will be 
proved in Section \ref{FrobSection}.

\medskip
 If $\CO$ is a $p$-adic ring, 
and $B$ a block algebra of a finite group 
algebra over $\CO$, then $\Pic(B)$ is known to be  finite, 
(see \cite[Theorems (55.19), (55.25)]{CRII}). We give an alternative proof of 
this fact as part of Theorem \ref{Picfinite} below. 
If $k$ is an algebraic closure of $\Fp$ then we do not know whether $\Pic(B)$ 
is finite, but the following result shows that $\Pic(B)$ is the colimit
of the Picard groups of blocks over some $p$-adic rings; in
particular, $\Pic(B)$ is a torsion group. If $\CO$  is of characteristic zero, by a $p$-adic subring of 
$\CO$ we mean a finite extension 
$R'$ of the $p$-adic integers contained in $\CO$. 
The $p$-adic subrings of $\CO$ form a directed  system; the colimit of 
this system is the union in $\CO$ of the $p$-adic subrings. 

\begin{thm} \label{Pic-finite}
Suppose that $k$ is an algebraic closure of $\Fp$ and that $\chr(\CO)=$
$0$. Let $G$ be a finite group and $b$ a central idempotent of $\OG$. 
Set $B=$ $\OGb$. 

\smallskip\noindent (i)
Let $M$ be a $(B,B)$-bimodule inducing a Morita equivalence on $B$. 
There exists a $p$-adic subring $\CO_0$ of $\CO$ such that $b\in$ 
$\CO_0G$ and such that, setting $B_0=$ $\CO_0 Gb$, we have  
$M\cong$ $\CO\otimes_{\CO_0} M_0$ for some  $(B_0, B_0)$-bimodule 
$M_0$ inducing a Morita equivalence on $B_0$. 

\smallskip\noindent (ii)
The group $\Pic(B)$ is the colimit of the finite groups 
$\Pic(\CO_0 Gb)$, with $\CO_0$ running over the $p$-adic subrings of
$\CO$ such that $b\in$ $\CO_0 G$. In particular, all elements in 
$\Pic(B)$ have finite order. 
\end{thm}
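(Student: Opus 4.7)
The plan is to exploit that $\CO$ is the directed union of its $p$-adic subrings $\CO_0$ and to run a finite-presentation descent argument. The key technical input I will use is that $\CO$ is \emph{faithfully} flat over each $\CO_0$: it is flat because it is torsion-free over the discrete valuation ring $\CO_0$, and it is faithfully flat because it is a nonzero flat module over a local ring. Since $b$ has only finitely many nonzero coefficients in $\CO G$, for every sufficiently large $\CO_0$ one has $b\in\CO_0 G$, and I will set $B_0:=\CO_0 Gb$, a free $\CO_0$-module of finite rank with $B=\CO\otimes_{\CO_0}B_0$.

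For (i), start with $M$ inducing a Morita equivalence on $B$ and let $M^*$ be its inverse in $\Pic(B)$, equal to the $\CO$-dual of $M$ by the symmetry of $B$. Both $M$ and $M^*$ are finitely generated as one-sided $B$-modules, hence as $B\otimes_\CO B^{\op}$-modules, and finitely presented because $B\otimes_\CO B^{\op}$ is Noetherian. I will choose finite presentations of $M$ and $M^*$ as bimodules; the finitely many $\CO$-entries of the defining matrices lie in some common $p$-adic subring $\CO_0$, producing $(B_0,B_0)$-bimodules $M_0$ and $N_0$ with $M\cong\CO\otimes_{\CO_0}M_0$ and $M^*\cong\CO\otimes_{\CO_0}N_0$. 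Because $M_0\otimes_{B_0}N_0$ is finitely presented as a bimodule, flat base change for $\Hom$ gives the canonical isomorphism
\[
\CO\otimes_{\CO_0}\Hom_{B_0\otimes_{\CO_0}B_0^{\op}}(M_0\otimes_{B_0}N_0,\,B_0)\;\cong\;\Hom_{B\otimes_\CO B^{\op}}(M\otimes_B M^*,\,B).
\]
The unit isomorphism $M\otimes_B M^*\cong B$ is therefore the image of some morphism $\phi_0\colon M_0\otimes_{B_0}N_0\to B_0$ defined over a suitable enlargement of $\CO_0$; by faithful flatness, $\phi_0$ is itself an isomorphism. Descending the counit $M^*\otimes_B M\cong B$ in the same way, after one further enlargement of $\CO_0$, will complete the proof that $M_0$ induces a Morita equivalence on $B_0$.

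For (ii), the functor $\CO\otimes_{\CO_0}(-)$ sends Morita bimodules over $B_0$ to Morita bimodules over $B$, inducing compatible group homomorphisms $\Pic(B_0)\to\Pic(B)$; the resulting map from the colimit is surjective by (i). For injectivity, if the base changes of two Morita bimodules $M_0,M_0'$ over $B_0$ are isomorphic over $B$, the same Hom-descent identification combined with faithful flatness produces a $p$-adic subring $\CO_0'\supseteq\CO_0$ over which $M_0$ and $M_0'$ become isomorphic. Finally, each $\Pic(B_0)$ is finite, for instance by \cite[Theorems (55.19), (55.25)]{CRII} or by the first part of the cited Theorem \ref{Picfinite}, so $\Pic(B)$ is a colimit of finite groups, and in particular every element has finite order.

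The main obstacle I anticipate is the descent step in (i): it is not enough to descend $M$ alone as a bimodule, since this a priori produces only a $(B_0,B_0)$-bimodule with no reason to be invertible. One must simultaneously descend an inverse $N_0$ and both unit isomorphisms, and then verify that the descended unit morphisms remain isomorphisms. The three ingredients I will need to combine for this are finite presentability of $M$ as a bimodule, flat base change for $\Hom$ on finitely presented modules, and the faithful flatness of $\CO$ over $\CO_0$, with the last one being the crucial property that upgrades the descended morphisms from morphisms whose base change is an isomorphism to genuine isomorphisms.
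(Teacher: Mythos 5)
Your argument has a genuine gap at its very first step, and it is located exactly where the paper has to work hardest. You write that ``$\CO$ is the directed union of its $p$-adic subrings'' and conclude that the finitely many $\CO$-coefficients appearing in a presentation matrix of $M$ (and later, the entries of a unit isomorphism) lie in some common $p$-adic subring $\CO_0$. This is false: the union $R$ of the $p$-adic subrings of $\CO$ is the set of elements \emph{algebraic} over $\Q_p$, and $\CO$ is the \emph{completion} of $R$, not $R$ itself (this is the content of Lemma \ref{noncomplete-ramified} in the paper; e.g.\ $W(\overline{\F}_p)$ is uncountable while $R$ is countable). A general element of $\CO$ lies in no $p$-adic subring whatsoever, so a presentation of $M$ chosen at random has no reason to be defined over any $\CO_0$, and your descent never gets off the ground. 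The same problem recurs in your injectivity argument for (ii).

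The paper's proof is precisely a mechanism for overcoming this. First it descends $X=K\otimes_\CO M$ from $K$ to the algebraic closure $E$ of $\Q_p$ in $K$; this is a nontrivial rationality statement proved via Brauer's theorem that Schur indices are trivial over fields containing all $p'$-roots of unity (Lemma \ref{centralidem}). Second, it descends the lattice $M$ itself from the completion $\CO$ to $R$ by intersecting $M$ with the $E$-form $X_E$ and invoking \cite[Corollary 30.10]{CRI}. Only \emph{after} these two steps does the ``finitely many structure constants of a basis lie in some $\CO_0$'' argument become valid, because now the constants lie in $R=\bigcup\CO_0$. Your remaining machinery (finite presentation, flat base change for $\Hom$, faithful flatness to see that the descended unit and counit are isomorphisms) is sound and is essentially what \cite[Lemma 4.4, Prop.~4.5]{KeLichar} supplies in the paper, but it only addresses the step you correctly identified as delicate (invertibility descends), not the step that is actually missing (the bimodule descends at all).
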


We prove this theorem and related facts in Section 
\ref{PicfiniteSection}. It is well known that  a  result  analogous   to Theorem \ref{Pic-finite}   holds over $k$; see  Lemma \ref{PicFpbar} below.   On the  other hand, unlike the situation in characteristic zero, if  $k$ is algebraically closed  then  it is known that Picard groups   of block algebras over   $k$ are   not in  general finite.

\begin{rem}  \label{EBrem}
Two Morita equivalent blocks of finite groups via a bimodule with 
endopermutation source have isomorphic defect groups.
It is an open problem at present whether Morita equivalent blocks
always have isomorphic defect groups.
\end{rem}

\begin{rem} \label{WeissRem}
Let $P$ be a finite $p$-group and $Q$ a normal subgroup of $P$.
Suppose that $\chr(\CO)=0$. Weiss' criterion states that if $M$
is a finitely generated $\OP$-module such that $\Res^P_Q(M)$ is a free 
$\OQ$-module and such that $M^Q$ is a permutation $\CO P/Q$-module,
then $M$ is a permutation $\OP$-module. This was proved by Weiss in 
\cite[Theorem 2]{Weiss}, \cite[\S 6]{Weiss2} for $\CO$ the ring of 
$p$-adic integers $\Z_p$, extended to finite extensions of $\Z_p$ by 
Roggenkamp in \cite[Theorem II]{Roggsub}, 
and further extended to $\CO$ with perfect residue field in work of
Puig \cite[Theorem A.1.2]{Puigbook}, and for general $\CO$ in work of 
McQuarrie, Symonds, and Zalesskii \cite[Theorem 1.2]{MSZ}. 
Weiss' criterion is a key ingredient for calculating Picard groups of 
block algebras of finite groups. In applications below, we will
make use of the fact that $Q$-fixed points in the above sitation are
isomorphic to cofixed points. More precisely, with the notation and 
hypotheses above, one verifies that there is an $\CO P/Q$-module 
isomorphism $\CO\tenOQ M\cong$ $M^Q$ sending $1\ten m$ to $\Tr^Q_1(m)$, 
where $m\in$ $M$ and $\Tr^Q_1(m)=$ $\sum_{y\in Q} ym$. 
\end{rem} 

\section{Tensoring bimodules with endopermutation source}
\label{tensorSection}

Some calculations of Picard groups will involve stable equivalences
of Morita type and stable Picard groups. We briefly review these
notions. Let $B$, $C$ be symmetric $\CO$-algebras with separable
semisimple quotients. 
Following terminology introduced by Brou\'e, we say that a 
$(B,C)$-bimodule $M$ induces a stable equivalence of Morita type 
between $B$ and $C$, if $M$ is finitely generated projective as a left 
$B$-module, as a right $C$-module, and if we have bimodule isomorphisms 
$M\tenC M^*\cong $ $B\oplus X$ for some projective 
$B\tenO B^\op$-module and $M^*\tenB M\cong$ $C\oplus Y$ for some 
projective $C\tenO C^\op$-module. 

The {\it stable Picard group of $B$} is the group $\uPic(B)$ of 
isomorphism classes in the stable category of $(B,B)$-bimodules 
inducing a stable equivalence of Morita type on $B$. The group 
structure on $\uPic(B)$ is induced by taking tensor products over 
$B$. Any Morita equivalence is a stable equivalence of Morita type, 
and hence if $B$ has no nonzero projective summand as a 
$B\tenO B^\op$-module, then we have an inclusion of groups 
$\Pic(B)\subseteq$ $\uPic(B)$. 

Denoting by $\Omega(B)$ the kernel of a projective cover of $B$ as
a $B\tenO B^\op$-module, it is well-known that if $B$ has no nonzero 
projective summand as a $B\tenO B^\op$-module, then $\Omega(B)$ 
induces a stable equivalence of Morita type on $B$, and the image of 
$\Omega(B)$ in $\uPic(B)$ generates a cyclic central subgroup of 
$\uPic(B)$, which we will denote by $\langle \Omega(B) \rangle$. 
See e. g. \cite[Proposition 2.9]{Listable}. 

By \cite[Theorem 2.1]{Listable}, if $B$ and $C$ are in addition 
indecomposable as algebras and not projective as
modules over $B\tenO B^\op$ and $C\tenO C^\op$, respectively,  
and if $M$ induces a stable equivalence of Morita type between $B$ and 
$C$, then $M$ has a unique indecomposable nonprojective direct summand 
in any decomposition as a direct sum of indecomposable 
$B\tenO C^\op$-modules. 

For $P$, $Q$ finite groups and $\varphi : P\to$ $Q$ a group isomorphism 
set 
$$\Delta\varphi= \{(x,\varphi(x))\ |\ x\in P\}\ .$$  
We write $\Delta P$ instead of $\Delta\Id_P$. We regard an 
$\OP$-module $U$ as an $\CO\Delta\varphi$-module with $(x,\varphi(x))$ 
acting on $U$ as $x$, where $x\in$ $P$. That is, the action of 
$\Delta\varphi$ on $U$ is determined by the action of the first 
component of an element in $\Delta\varphi$; this accounts for the 
slight asymmetry in the statements (i) and (ii) in the next Lemma. If 
$V$ is an $\OQ$-module, we denote by ${_\varphi{V}}$ the $\OP$-module 
which is equal to $V$ as an $\CO$-module, such that $x\in$ $P$ acts as 
$\varphi(x)$ on $V$. We use similar notation for right modules and 
bimodules. We regard an $\CO(P\times Q)$-module $M$ as an 
$(\OP,\OQ)$-bimodule via $xmy^{-1}=$ $(x,y)\cdot m$, where $x\in$ $P$, 
$y\in$ $Q$ and $m\in$ $M$. 

\begin{lem} \label{Indtensor}
Let $P$, $Q$, $R$ be finite groups Let $\varphi: P\to$ $Q$ and 
$\psi : Q\to$ $R$ be group isomorphisms. Let $V$ be an $\OP$-module and 
$W$ an $\OQ$-module. 

\begin{enumerate}
\item[{\rm (i)}]
We have an $(\OP, \OQ)$-bimodule isomorphism
$$\Ind^{P\times Q}_{\Delta\varphi}(V)\cong 
(\Ind^{P\times P}_{\Delta P}(V))_{\varphi^{-1}}$$
sending $(x,y)\ten v$ to $(x,\varphi^{-1}(y))\ten v$, for all
$x\in$ $P$, $y\in$ $Q$ and $v\in$ $V$.

\item[{\rm (ii)}]
We have an $(\OP, \OQ)$-bimodule isomorphism
$$\Ind^{P\times Q}_{\Delta\varphi}(V)\cong 
{_\varphi{(\Ind^{Q\times Q}_{\Delta Q}({_{\varphi^{-1}}{V}}))}}$$
sending $(x,y)\ten v$ to $(\varphi(x), y)\ten v$, for all
$x\in$ $P$, $y\in$ $Q$ and $v\in$ $V$.

\item[{\rm (iii)}]
We have an $(\OP, \OR)$-bimodule isomorphism
$$\Ind^{P\times Q}_{\Delta\varphi}(V)\tenOQ 
\Ind^{Q\times R}_{\Delta\psi}(W)\cong 
\Ind^{P\times R}_{\Delta(\psi\circ\varphi)}(V\tenO ({_\varphi{W}}))\ .$$
\end{enumerate}
\end{lem}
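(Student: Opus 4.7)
The plan is to verify each of the three isomorphisms by writing down the proposed maps explicitly on generators, checking that they respect the defining tensor relations of the induced modules, and confirming compatibility with the stated bimodule structures. All maps will be $\CO$-linear bijections in a manifest way: the set $\{(p,1):p\in P\}$ is a system of coset representatives for $(P\times Q)/\Delta\varphi$ (since $\varphi$ is an isomorphism), which gives both sides of (i) and (ii), and both factors in (iii), a natural free $\CO$-basis indexed by group elements times a basis of the underlying endopermutation module; the prescribed formulas send one such basis to another.

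For (i) and (ii), the check is mechanical. For (i), under $(x,y)\otimes v\mapsto (x,\varphi^{-1}(y))\otimes v$ one verifies that the source relation $(xa,y\varphi(a))\otimes v = (x,y)\otimes av$ maps to $(xa,\varphi^{-1}(y)a)\otimes v = (x,\varphi^{-1}(y))\otimes av$, which holds in $\Ind^{P\times P}_{\Delta P}(V)$ by the $\Delta P$-relation; the left $\OP$-action is preserved obviously, and the right $\OQ$-action on the target (twisted by $\varphi^{-1}$) makes right multiplication by $y'\in Q$ correspond to right multiplication by $\varphi^{-1}(y')$, which matches the formula. Part (ii) is symmetric: $(x,y)\otimes v\mapsto (\varphi(x),y)\otimes v$ sends the source relation to the $\Delta Q$-relation in the target, using that the $P$-action of $a$ on $V$ coincides with the $Q$-action of $\varphi(a)$ on ${_{\varphi^{-1}}V}$ by definition.

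For (iii), the cleanest approach is to reduce to the untwisted diagonal case via (i). First, apply (i) on the left factor to obtain
$$\Ind^{P\times Q}_{\Delta\varphi}(V)\tenOQ \Ind^{Q\times R}_{\Delta\psi}(W) \cong \bigl(\Ind^{P\times P}_{\Delta P}(V)\bigr)_{\varphi^{-1}}\tenOQ \Ind^{Q\times R}_{\Delta\psi}(W).$$
Next, there is a natural bimodule isomorphism $M_{\varphi^{-1}}\tenOQ N \cong M\otimes_{\OP}{_\varphi N}$ for any right $\OP$-module $M$ and left $\OQ$-module $N$, simply rewriting the balancing relation. Applied here, together with the auxiliary identification ${_\varphi \Ind^{Q\times R}_{\Delta\psi}(W)}\cong \Ind^{P\times R}_{\Delta(\psi\varphi)}({_\varphi W})$ (proved by the same direct map $(p,r)\otimes w\mapsto (\varphi(p),r)\otimes w$ as in (ii)), this reduces the problem to the untwisted identity
$$\Ind^{P\times P}_{\Delta P}(V)\otimes_{\OP} \Ind^{P\times R}_{\Delta(\psi\varphi)}(U)\cong \Ind^{P\times R}_{\Delta(\psi\varphi)}(V\tenO U)$$
with $U={_\varphi W}$. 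This last step is the projection formula: $\Ind^{P\times P}_{\Delta P}(V)\otimes_{\OP}N$ is naturally isomorphic to $V\tenO N$ with the diagonal $\OP$-action via $v\otimes n\mapsto (1,1)\otimes v\otimes n$, and tensoring with $V$ and then inducing from $\Delta(\psi\varphi)$ commutes in the usual way.

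The main obstacle is bookkeeping. The convention $(x,y)\cdot m = xmy^{-1}$ and the twist notations $(-)_{\varphi^{\pm 1}}$, ${_\varphi(-)}$ must be applied to the correct side with the correct direction, and in (iii) one has to keep track of how $\varphi$, $\psi$, and $\psi\varphi$ interact under the reductions. A self-contained alternative avoiding the chain of identities is to give the map of (iii) directly: every element of the tensor product LHS has the normal form $((x,1)\otimes v)\tenOQ ((1,1)\otimes w)$ obtained by using the coset representatives on each factor, and one sends this to $(x,1)\otimes (v\tenO w)$ on the RHS. The only nontrivial check is right $\OR$-equivariance, which reduces to the identity $\varphi\circ(\psi\varphi)^{-1} = \psi^{-1}$ combined with the diagonal $P$-action on $V\tenO {_\varphi W}$.
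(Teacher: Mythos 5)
Your proposal is correct and follows essentially the route the paper itself takes: (i) and (ii) are the ``straightforward verifications'' the paper asserts, and your reduction of (iii) to the untwisted case $\Ind^{P\times P}_{\Delta P}(V)\tenOP N\cong V\tenO N$ via (i), (ii) and the twist-shifting identity $M_{\varphi^{-1}}\tenOQ N\cong M\tenOP{_\varphi N}$ is precisely the alternative proof the paper sketches (citing \cite[Corollary 2.4.13]{LiBook} for the untwisted step, with Bouc's theorem as the other option). The bookkeeping in your verifications, including the normal form $((x,1)\ten v)\tenOQ((1,1)\ten w)$ and the right $\OR$-equivariance check, is consistent with the paper's conventions.
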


\begin{proof}
The statements (i) and (ii) are straightforward verifications. Statement
(iii) is a special case of a more general result of Bouc 
\cite[Theorem 1.1]{Bouc10}. One can prove (iii) also by first showing 
this for $P=Q=R$, $\varphi=\psi=\Id_P$ (see e. g. 
\cite[Corollary 2. 4. 13]{LiBook}), and then using (i), (ii) to obtain 
the general case. 
\end{proof}

\begin{lem} \label{Indtensordual}
Let $P$, $Q$ be finite groups and $\varphi : P\to$ $Q$ a group
isomorphism, and $V$ a finitely generated $\CO$-free $\OP$-module. 
We have an $(\OP, \OP)$-bimodule isomorphism
$$\Ind^{P\times Q}_{\Delta\varphi}(V)^* \cong 
\Ind^{Q\times P}_{\Delta\varphi^{-1}}({_{\varphi^{-1}}(V^*)})\ .$$
\end{lem}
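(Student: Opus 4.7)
The plan is to reduce the problem to the diagonal case $P = Q$ and $\varphi = \Id_P$, where the claim specialises to the standard fact that $\CO$-duality commutes with induction over a finite-index subgroup. Concretely, I will first apply Lemma \ref{Indtensor}(ii) to express $\Ind^{P\times Q}_{\Delta\varphi}(V)$ as the left $\varphi$-twist of a diagonal induction $\Ind^{Q\times Q}_{\Delta Q}({_{\varphi^{-1}}{V}})$; then I will dualise this, using the elementary identity $({_\varphi{X}})^* \cong (X^*)_\varphi$ for an $(\OQ,\OQ)$-bimodule $X$, to move the twist from the left to the right side of the dual bimodule.

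The central ingredient is the natural isomorphism $\Ind^G_H(U)^* \cong \Ind^G_H(U^*)$ valid for any subgroup $H$ of finite index in a finite group $G$ and any finitely generated $\CO$-free $\OH$-module $U$, which follows from the agreement of induction with coinduction in this setting. I will apply this with $G = Q \times Q$, $H = \Delta Q$, and $U = {_{\varphi^{-1}}{V}}$. Since $\CO$-duality visibly commutes with the twist ${_{\varphi^{-1}}{(-)}}$, i.e. $({_{\varphi^{-1}}{V}})^* = {_{\varphi^{-1}}{(V^*)}}$, this yields
$$\Ind^{Q\times Q}_{\Delta Q}({_{\varphi^{-1}}{V}})^* \cong \Ind^{Q\times Q}_{\Delta Q}({_{\varphi^{-1}}{(V^*)}})$$
as $(\OQ, \OQ)$-bimodules. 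Combining with the first step gives
$$\Ind^{P\times Q}_{\Delta\varphi}(V)^* \cong \bigl(\Ind^{Q\times Q}_{\Delta Q}({_{\varphi^{-1}}{(V^*)}})\bigr)_\varphi$$
as $(\OQ, \OP)$-bimodules. To finish, I will apply Lemma \ref{Indtensor}(i) with $Q$ in place of $P$, the isomorphism $\varphi^{-1}: Q \to P$ in place of $\varphi$, and the $\OQ$-module ${_{\varphi^{-1}}{(V^*)}}$ in place of $V$; this rewrites the right-hand side above as $\Ind^{Q\times P}_{\Delta\varphi^{-1}}({_{\varphi^{-1}}{(V^*)}})$, as desired.

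The principal obstacle is the bookkeeping with side conventions: keeping straight the difference between ${_\varphi{(-)}}$ and ${_{\varphi^{-1}}{(-)}}$, between left and right twists after dualisation, and the swap $\Delta\varphi \leftrightarrow \Delta\varphi^{-1}$ under the flip of components of $P \times Q$. Once these conventions are fixed, each of the three isomorphisms above is essentially formal. As a sanity check one could also write down the desired isomorphism directly: choosing coset representatives of $\Delta\varphi$ in $P \times Q$ together with a basis of $V$ produces a basis of $\Ind^{P\times Q}_{\Delta\varphi}(V)$, and the dual basis is readily identified with the natural basis of $\Ind^{Q\times P}_{\Delta\varphi^{-1}}({_{\varphi^{-1}}{(V^*)}})$ coming from the corresponding coset representatives of $\Delta\varphi^{-1}$ in $Q \times P$ under the swap $(x,y) \mapsto (y^{-1},x^{-1})$.
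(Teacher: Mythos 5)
Your proposal is correct and rests on the same two ingredients as the paper's proof: the commutation of $\CO$-duality with induction, and the observation that dualising a bimodule interchanges the two factors, turning $\Delta\varphi$ into $\Delta\varphi^{-1}$ and forcing the $\varphi^{-1}$-twist on $V^*$. The only difference is organisational — you route the twist-tracking through Lemma \ref{Indtensor}(i),(ii) to reduce to the diagonal case, whereas the paper dualises $\Ind^{P\times Q}_{\Delta\varphi}(V)$ directly and then reads off the effect of the swap $P\times Q\cong Q\times P$ on $\Delta\varphi$ — so this counts as essentially the same argument.
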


\begin{proof}
Induction and duality commute, and hence
$\Ind^{P\times Q}_{\Delta\varphi}(V)^* \cong$ 
$\Ind^{P\times Q}_{\Delta\varphi}(V^*)$ as $\CO(P\times Q)$-modules.
The isomorphism $P\times Q\cong$ $Q\times P$ exchanging the two
factors sends $(x,\varphi(x))\in$ $\Delta\varphi$ to 
$(\varphi(x),x)=$ $(\varphi(x),\varphi^{-1}(\varphi(x)))\in$ 
$\Delta\varphi^{-1}$, which accounts for the subscript $\varphi^{-1}$
in the last term in the statement of the Lemma. 
\end{proof}

\begin{lem} \label{Indzeta}
Let $P$ be a finite group and $\zeta : P\to$ $\CO^\times$ a group
homomorphism. Denote by $\CO_\zeta$ the $\OP$-module
$\CO$ with $x\in$ $P$ acting by multiplication with $\zeta(x)$.
Denote by $\tau$ the $\CO$-algebra automorphism of $\OP$
satisfying $\tau(x)=$ $\zeta^{-1}(x)x$ for all $x\in$ $P$.
There is an $\CO(P\times P)$-module isomorphism
$$\Ind^{P\times P}_{\Delta P}(\CO_\zeta)\cong \OP_\tau$$
sending $(x,y)\ten 1$ to $\zeta(y)xy^{-1}$ for $x$, $y\in$ $P$.
\end{lem}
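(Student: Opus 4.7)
The plan is to first identify a convenient $\CO$-basis for $\Ind^{P\times P}_{\Delta P}(\CO_\zeta)$, then check that the proposed assignment $(x,y)\ten 1\mapsto \zeta(y)xy^{-1}$ is a well-defined $\CO$-module map, and finally verify compatibility with both the left and right $\OP$-actions fixed by the convention $xmy^{-1}=(x,y)\cdot m$ recorded just before Lemma \ref{Indtensor}. I expect the only real delicacy to be bookkeeping: correctly translating from $\CO(P\times P)$-language to bimodule language, and keeping track of the direction of the twist by $\zeta$ (so the main obstacle is merely nailing down conventions rather than any genuine difficulty).

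First I would use the unique factorisation $(x,y)=(xy^{-1},1)(y,y)$ to see that $\{(x,1)\ten 1\ :\ x\in P\}$ is an $\CO$-basis of the induced module, and that in general
$$(x,y)\ten 1\ =\ (xy^{-1},1)(y,y)\ten 1\ =\ \zeta(y)\,(xy^{-1},1)\ten 1\ .$$
The proposed formula respects the defining relation of the tensor product over $\CO\Delta P$, since for $a\in P$ one has $\zeta(ya)(xa)(ya)^{-1}=\zeta(a)\zeta(y)xy^{-1}$; thus the map is well-defined. On the chosen basis it sends $(x,1)\ten 1\mapsto x$, that is, it sends the $\CO$-basis to the natural $\CO$-basis $P$ of $\OP_\tau$, so it is an $\CO$-module isomorphism.

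Finally I would check bimodule compatibility. Left multiplication by $z\in P$ corresponds to the action of $(z,1)$, so $(z,1)\cdot((x,y)\ten 1)=(zx,y)\ten 1$ maps to $\zeta(y)zxy^{-1}=z\cdot(\zeta(y)xy^{-1})$, as required. Right multiplication by $w\in P$ corresponds to the action of $(1,w^{-1})$, so $(x,y)\ten 1$ is sent to $(x,w^{-1}y)\ten 1$, which maps to
$$\zeta(w^{-1}y)\,x(w^{-1}y)^{-1}\ =\ \zeta^{-1}(w)\zeta(y)\,xy^{-1}w\ .$$
On the other hand, in $\OP_\tau$ right multiplication by $w$ acts through $\tau(w)=\zeta^{-1}(w)w$, giving $(\zeta(y)xy^{-1})\cdot w=\zeta(y)xy^{-1}\tau(w)=\zeta^{-1}(w)\zeta(y)xy^{-1}w$, which matches. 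This completes the verification and the lemma follows.
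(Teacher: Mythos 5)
Your verification is correct: the well-definedness check over $\CO\Delta P$, the identification of the basis $\{(x,1)\ten 1\}$, and the left/right action computations (including the twist by $\tau$ on the right) all work out with the paper's conventions. The paper simply records this lemma as a straightforward verification, and your argument is exactly that verification carried out in full.
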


\begin{proof}
This is a straightforward verification.
\end{proof}

For $P$, $Q$ finite $p$-groups, $\CF$ a fusion system on $P$
and $\varphi : P\to$ $Q$ a group isomorphism,  we denote by
${^\varphi{\CF}}$ the fusion system on $Q$ induced by $\CF$
via the isomorphism $\varphi$. That is, for $R$, $S$ subgroups
of $P$, we have $\Hom_{{^\varphi{\CF}}}(\varphi(R),\varphi(S))=$
$\varphi\circ\Hom_\CF(R,S)\circ\varphi^{-1}$, where we use the
same notation $\varphi$, $\varphi^{-1}$ for their restrictions to
$S$, $\varphi(R)$, respectively. 

\begin{lem}[{\cite[7.6]{Puigbook}}] \label{Vvarphi-existence}
Let $G$, $H$ be finite groups, $b$, $c$ blocks of $\OG$, $\OH$ with 
nontrivial defect groups $P$, $Q$, respectively,
and let $i\in$ $(\OGb)^P$ and $j\in$ $(\OHc)^Q$ be source  
idempotents. Denote by $\CF$ the fusion system on $P$
of $b$ determined by $i$, and denote by $\CG$ the fusion
system on $Q$ determined by $j$.  
Let $M$ be an indecomposable $(\OGb, \OHc)$-bimodule with 
endopermutation source inducing a stable equivalence of Morita type. 

Then there is an isomorphism $\varphi : P\to$ $Q$ and an
indecomposable endopermutation $\OP$-module
$V$ such that $M$ is isomorphic to a direct summand of 
$$\OG i\tenOP \Ind_{\Delta\varphi}^{P\times Q}(V) \tenOQ j\OH$$
as an $(\OGb,\OHc)$-bimodule. For any such $\varphi$ and
$V$, the following hold.

\begin{enumerate}
\item[{\rm (i)}]
$\Delta\varphi$ is a vertex of $M$ and $V$, regarded as an
$\CO\Delta\varphi$-module, is a source of $M$.

\item[{\rm (ii)}]
We have ${^\varphi{\CF}}=$ $\CG$, and the endopermutation 
$\OP$-module $V$ is $\CF$-stable.
\end{enumerate}
\end{lem}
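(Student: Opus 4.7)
My plan is to combine the endopermutation source hypothesis on $M$ with the non-projectivity coming from the fact that $M$ induces a stable equivalence of Morita type, and then to match the resulting vertex–source data against the source idempotents $i$ and $j$. Throughout I use that $M\tenOHc M^*\cong \OGb\oplus X$ with $X$ projective over $\OGb\tenO\OGb^\op$, and dually $M^*\otimes_{\OGb}M\cong \OHc\oplus Y$ with $Y$ projective; these inverse relations force the vertex of $M$ to be as large as any defect group will allow.

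First, by hypothesis there exist a subgroup $R\le G\times H$ and an endopermutation $\OR$-module $W$ with $M$ a direct summand of $\Ind_R^{G\times H}(W)$. A Mackey-type analysis of the isomorphism $M\tenOHc M^*\cong \OGb\oplus X$, using that the unique non-projective summand $\OGb$ has vertex $\Delta P$ in $G\times G$, shows that $R$ is $(G\times H)$-conjugate to a twisted diagonal $\Delta\varphi$ for some group isomorphism $\varphi:P\to Q$ between defect groups of $b$ and $c$. After replacing $R$ by this conjugate and setting $V:=W$, regarded as $\OP$-module via $x\mapsto(x,\varphi(x))$, we obtain that $M$ is a direct summand of $\Ind_{\Delta\varphi}^{G\times H}(V)$, which gives the vertex–source claim in (i).

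Second, to pass to the source-idempotent form I would use that $\OG i$ is finitely generated projective as right $\OP$-module and $j\OH$ is finitely generated projective as left $\OQ$-module, together with the splitting of the multiplication maps $\OG i\tenOP i\OG\to\OGb$ and $\OHc\to\OHc j\tenOQ j\OH$ built into the source-idempotent property. Then the $(\OP,\OQ)$-bimodule $iMj$ inherits vertex $\Delta\varphi$ and source $V$, so $iMj$ is a direct summand of $\Ind_{\Delta\varphi}^{P\times Q}(V)$; reabsorbing the idempotents yields $M$ as a direct summand of $\OG i\tenOP \Ind_{\Delta\varphi}^{P\times Q}(V)\tenOQ j\OH$, completing (i).

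For (ii), I would invoke the Brauer correspondence on both sides. Nonvanishing of the Brauer construction $M(\Delta(\varphi|_D))$ for every subgroup $D\le P$, guaranteed by $V$ having vertex $\Delta\varphi$, together with the inverse pair $M,M^*$ produces a fusion-preserving bijection between $(G,b)$-Brauer pairs contained in the maximal one chosen by $i$ and $(H,c)$-Brauer pairs contained in the one chosen by $j$; this bijection is realized by $\varphi$, hence ${^\varphi\CF}=\CG$. The $\CF$-stability of $V$ then falls out: for $\psi:D\to D'$ in $\CF$, realizing $\psi$ by conjugation on the $G$-side and transporting by $\varphi$ to the $H$-side, the uniqueness of the non-projective indecomposable summand of $M$ (Theorem 2.1 of \cite{Listable}) forces $V_D\cong {_\psi{V_{D'}}}$. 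The main obstacle will be the first step—pinning $R$ down as a twisted diagonal of full defect groups—which requires delicate Mackey bookkeeping on both $M\tenOHc M^*$ and $M^*\otimes_{\OGb}M$ and is the core content of \cite[7.6]{Puigbook}; once the vertex–source picture is fixed, the source-algebra factorization and the fusion verifications follow by standard local arguments.
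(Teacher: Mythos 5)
The paper proves nothing here: it attributes the statement to Puig \cite[7.6]{Puigbook} and explicitly refers the reader to \cite[9.11.2]{LiBook} for a proof in the paper's terminology, so there is no in-text argument to compare your sketch against. Your road map follows the broad outline of the proof found in those references, and you rightly defer to Puig the Mackey bookkeeping that pins the vertex down as a full twisted diagonal.

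That said, one step is stated incorrectly. You claim the $(\OP,\OQ)$-bimodule $iMj$ ``inherits vertex $\Delta\varphi$ and source $V$'' and is therefore a direct summand of $\Ind_{\Delta\varphi}^{P\times Q}(V)$; this already fails for $H=G$, $c=b$, $j=i$, $M=\OGb$, $\varphi=\Id_P$, $V=\CO$, where $iMj = i\OGb i = A$ is the source algebra, whose $\CO$-rank strictly exceeds $|P|=\rank_\CO\bigl(\Ind_{\Delta P}^{P\times P}(\CO)\bigr)$ unless $A=\OP$; so $A$ is relatively $\Delta P$-projective but certainly not a summand of $\OP$. What one actually does, following \cite[7.6]{Puigbook}, is write $b\cdot\Ind_{\Delta\varphi}^{G\times H}(V)\cdot c\cong \OGb\tenOP\Ind_{\Delta\varphi}^{P\times Q}(V)\tenOQ c\OH$, decompose $\OGb$ and $c\OH$ along orthogonal primitive idempotent decompositions of $b$ in $(\OGb)^P$ and of $c$ in $(\OHc)^Q$, and then use that an indecomposable summand $M$ with vertex $\Delta\varphi$ of full order $|P|$ has nonzero Brauer construction at $\Delta\varphi$, which forces the contributing idempotents to be source idempotents, hence conjugate to $i$ and $j$, after possibly modifying $\varphi$ by elements of $\Aut_\CF(P)$ and $\Aut_\CG(Q)$. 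With that correction your outline is consistent with the cited references, but as written it remains a guided sketch whose essential content is delegated to \cite[7.6]{Puigbook} and \cite[9.11.2]{LiBook} rather than a self-contained argument.
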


See  \cite[9.11.2]{LiBook} for a proof of the above Lemma
using the terminology of the present paper. 

\begin{lem}[{cf. \cite{Pulocsource}}] \label{locsource}
Let $G$ be a finite group, $b$ a block of $\OG$, $P$ a defect group
of $b$, and $i\in$ $(\OGb)^P$ a source idempotent. Denote by
$\CF$ the fusion system on $P$ of the block $b$ determined by the 
choice of $i$. Let $\varphi\in$ $\Aut(P)$. The following are equivalent.

\begin{enumerate}
\item[{\rm (i)}]
We have $\varphi\in \Aut_\CF(P)$.

\item[{\rm (ii)}]
We have $i\OG \cong$ ${_\varphi(i\OG)}$ as $(\OP,\OG)$-bimodules.

\item[{\rm (iii)}]
As an $\CO(P\times P)$-module, $i\OG i$ has an indecomposable direct 
summand isomorphic to $\Ind^{P\times P}_{\Delta\varphi}(\CO)$.
\end{enumerate}
\end{lem}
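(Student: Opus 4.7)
The plan is to close the cycle (i) $\Rightarrow$ (ii) $\Rightarrow$ (iii) $\Rightarrow$ (i). Throughout I use the description of $\CF$ via the maximal $b$-Brauer pair $(P,e_P)$ attached to $i$: here $e_P$ is the unique block of $kC_G(P)$ with $\Br_P(i)\,e_P=\Br_P(i)$, and $\Aut_\CF(P)$ is the image in $\Aut(P)$ of $N_G(P,e_P)$ under $g\mapsto c_g|_P$.

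For (i) $\Rightarrow$ (ii), choose $g\in N_G(P,e_P)$ with $c_g|_P=\varphi$. Then $gig^{-1}$ is a primitive idempotent of $(\OGb)^P$ whose Brauer image lies in the same component $kC_G(P)e_P$ of $\Br_P(\OGb)$ as $\Br_P(i)$, so by Puig's standard conjugacy argument for primitive idempotents of $(\OGb)^P$ with Brauer images in a common local piece, there exists $c\in((\OGb)^P)^\times$ with $c(gig^{-1})c^{-1}=i$. Set $h=cg$; then $hih^{-1}=i$, and since $c$ centralises $P$ we have $c_h|_P=\varphi$, i.e.\ $hy=\varphi(y)h$ for $y\in P$. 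Left multiplication by $h$ then defines an $(\OP,\OG)$-bimodule isomorphism $i\OG\to{_\varphi(i\OG)}$, $m\mapsto hm$: well-definedness in $i\OG$ follows from $ih=hi$; right $\OG$-linearity is immediate; and left $P$-equivariance with twist $\varphi$ is $hy=\varphi(y)h$. The inverse sends $m$ to $h^{-1}m$.

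For (ii) $\Rightarrow$ (iii), applying $-\otimes_{\OG}\OG i$ to $i\OG\cong{_\varphi(i\OG)}$ yields an $\CO(P\times P)$-module isomorphism $A\cong{_\varphi A}$, where $A=i\OG i$. The bimodule $A$ is a $p$-permutation $\CO(P\times P)$-module, being the image of the $(P\times P)$-equivariant idempotent endomorphism $m\mapsto imi$ of the permutation module $\OG$, and by Puig's structure theorem for source algebras it contains $\OP i\cong\OP=\Ind^{P\times P}_{\Delta P}(\CO)$ as a direct summand of $(\OP,\OP)$-bimodules. Twisting the left $P$-action by $\varphi$ turns this into a summand ${_\varphi(\OP)}$ of ${_\varphi A}\cong A$, and Lemma \ref{Indtensor}(ii) identifies ${_\varphi(\OP)}$ with $\Ind^{P\times P}_{\Delta\varphi}(\CO)$.

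For (iii) $\Rightarrow$ (i), the indecomposable trivial-source summand $\Ind^{P\times P}_{\Delta\varphi}(\CO)$ has vertex $\Delta\varphi$, so $\Br_{\Delta\varphi}(A)\neq 0$. The $\Delta\varphi$-fixed elements of the basis $G$ of $\OG$ are those $g\in G$ with $yg\varphi(y)^{-1}=g$ for all $y\in P$, equivalently the $g\in N_G(P)$ with $c_{g^{-1}}|_P=\varphi$; they form a single coset $g_0C_G(P)$ if non-empty, giving $\Br_{\Delta\varphi}(\OG)=kg_0C_G(P)$. Using the $(P\times P)$-equivariant idempotent $m\mapsto imi$ produces $\Br_{\Delta\varphi}(A)=\Br_P(i)\cdot kg_0C_G(P)\cdot\Br_P(i)=\Br_P(i)\cdot kC_G(P)\cdot (g_0\Br_P(i)g_0^{-1})\cdot g_0$, which is non-zero iff the primitive idempotents $\Br_P(i)$ and $g_0\Br_P(i)g_0^{-1}$ of $kC_G(P)\bar b$ lie in the same block of $kC_G(P)$, i.e.\ $g_0e_Pg_0^{-1}=e_P$. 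Hence $g_0\in N_G(P,e_P)$ and $\varphi=c_{g_0^{-1}}|_P\in\Aut_\CF(P)$. The delicate step is the Brauer-map bookkeeping in this last implication, particularly verifying the factorisation of $\Br_{\Delta\varphi}(A)$; once set up, block-orthogonality in $kC_G(P)\bar b$ finishes the proof.
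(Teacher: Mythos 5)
Your argument is correct in substance, but note that the paper does not actually prove this lemma: its ``proof'' consists of citing \cite[Theorems 8.7.1 and 8.7.4]{LiBook} (with a typo --- the second citation is for the equivalence of (i) and (iii)). What you have written is essentially a reconstruction of the standard argument underlying those cited results (going back to \cite{Pulocsource}): realise $\varphi$ by an element $g_0$ normalising the maximal Brauer pair, transport the twist through conjugation, and detect fusion via the Brauer construction $\Br_{\Delta\varphi}$ applied to the $p$-permutation $\CO(P\times P)$-module $i\OG i$. The cycle (i) $\Rightarrow$ (ii) $\Rightarrow$ (iii) $\Rightarrow$ (i) is a sound organisation, and your use of Lemma \ref{Indtensor}(ii) to identify ${_\varphi(\OP)}$ with $\Ind^{P\times P}_{\Delta\varphi}(\CO)$ is exactly right.

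Two points deserve more care. First, in (i) $\Rightarrow$ (ii) the conjugacy of $i$ and $gig^{-1}$ inside $((\OGb)^P)^\times$ is not automatic from their Brauer images lying in $kC_G(P)e_P$: you need that $e_P$ determines a \emph{unique} local point of $P$ on $\OGb$. This holds because $(P,e_P)$ is a maximal Brauer pair, so $e_P$ has the central defect group $Z(P)$ as a block of $C_G(P)$ and $kC_G(P)e_P$ is Morita equivalent to the local algebra $kZ(P)$, hence has a single conjugacy class of primitive idempotents; this should be said explicitly rather than absorbed into ``Puig's standard conjugacy argument''. Second, the same fact is what makes the final ``iff'' in (iii) $\Rightarrow$ (i) true in the forward direction ($e$ and $f$ primitive in the same block of a general algebra need not satisfy $eRf\neq 0$); however, for the implication you are actually proving you only need the trivial direction, namely that idempotents in \emph{distinct} blocks of $kC_G(P)$ multiply to zero through $kC_G(P)$, so you could streamline that step. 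The factorisation of $\Br_{\Delta\varphi}(i\OG i)$ is most cleanly justified by right translation by $g_0^{-1}$, which intertwines the $\Delta\varphi$-action with the $\Delta P$-conjugation action and identifies $\Br_{\Delta\varphi}(\OG)$ with $kC_G(P)\cdot \overline{g_0}$, after which the induced endomorphism is $\bar n\mapsto \Br_P(i)\,\bar n\,\Br_P(g_0ig_0^{-1})$ on the nose; your formula is the result of exactly this computation.
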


\begin{proof}
The equivalence of (i) and (ii) is proved as part of 
\cite[Theorem 8.7.4]{LiBook}, and the equivalence of (i) and (ii)
is proved as part of \cite[Theorem 8.7.1]{LiBook}. 
\end{proof}

\begin{lem} \label{Vvarphi-comp}
Let $G$, $H$, $L$ be finite groups, $b$, $c$, $d$ blocks of $\OG$, 
$\OH$, $\OL$ with nontrivial defect groups $P$, $Q$, $R$, respectively.
Let $i$, $j$, $l$ be source idempotents in $(\OGb)^P$, $(\OHc)^Q$, 
$(\OLd)^R$, respectively. Let $V$ be an indecomposable endopermutation
$\OP$-module with vertex $P$ and $W$ an indecomposable endopermutation
$\OQ$-module with vertex $Q$. Let $\varphi : P\to$ $Q$ and 
$\psi : Q\to$ $R$ be group isomorphisms.

Let $M$ be an indecomposable direct summand of the 
$(\OGb,\OHc)$-bimodule
$$X = \OG i\tenOP \Ind_{\Delta\varphi}^{P\times Q}(V) \tenOQ j\OH$$
and let $N$ be an indecomposable direct summand of the
$(\OHc,\OLd)$-bimodule
$$Y = \OH j\tenOQ \Ind_{\Delta\psi}^{Q\times R}(W) \tenOR l\OL \ .$$
Suppose that $M$ induces a stable equivalence of Morita type between 
$\OGb$ and $\OHc$, and suppose that $N$ induces a stable equivalence
of Morita type between $\OHc$ and $\OLd$. Then the indecomposable
nonprojective direct summand of the $(\OGb, \CO Ld)$-bimodule 
$M\tenOHc N$ is isomorphic to an indecomposable direct summand with
vertex $\Delta(\psi\circ\varphi)$ of the $\CO(G\times L)$-module
$$\OG i\tenOP \Ind^{P\times R}_{\Delta(\psi\circ\varphi)}(U)
\tenOR l\OL$$
where $U$ is an indecomposable direct summand with vertex $P$ of 
$V\tenO {_\varphi{W}}$.
\end{lem}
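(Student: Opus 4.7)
Since $M$ and $N$ each induce a stable equivalence of Morita type, so does the composite $M \tenOHc N$, and by \cite[Theorem 2.1]{Listable} this composite has a unique indecomposable non-projective direct summand, which we denote $T$. The goal is to identify $T$ as an indecomposable summand with vertex $\Delta(\psi\circ\varphi)$ of
$$Z_0 := \OG i \tenOP \Ind^{P\times R}_{\Delta(\psi\circ\varphi)}(U) \tenOR l\OL.$$
Since $M$ is a summand of $X$ and $N$ is a summand of $Y$, the bimodule $T$ is in particular a summand of $X \tenOHc Y$, so it suffices to locate $T$ inside an explicit decomposition of the latter.

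The key computation rewrites $X \tenOHc Y$ via the source-algebra identity $j\OH \tenOHc \OH j \cong j\OH j$ to obtain
$$X \tenOHc Y \cong \OG i \tenOP \Ind^{P\times Q}_{\Delta\varphi}(V) \tenOQ j\OH j \tenOQ \Ind^{Q\times R}_{\Delta\psi}(W) \tenOR l\OL.$$
Now $\OQ$ is a direct summand of $j\OH j$ as an $(\OQ,\OQ)$-bimodule, which is a special case of Lemma \ref{locsource} applied to the identity fusion automorphism of $Q$ (or may be read off from the embedding $y \mapsto yj$ together with the Brauer map). Extracting this summand and applying Lemma \ref{Indtensor}(iii) to the two induced bimodules exhibits
$$\OG i \tenOP \Ind^{P\times R}_{\Delta(\psi\circ\varphi)}(V \tenO {_\varphi W}) \tenOR l\OL$$
as a direct summand of $X \tenOHc Y$. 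Since $U$ is by assumption an indecomposable summand of the endopermutation $\OP$-module $V \tenO {_\varphi W}$ with vertex $P$, it follows that $Z_0$ is in turn a summand of this expression, hence of $X \tenOHc Y$.

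To conclude, one applies Lemma \ref{Vvarphi-existence} to the stable equivalence induced by $M \tenOHc N$: the summand $T$ has vertex $\Delta\chi$ for some isomorphism $\chi : P \to R$, with $\CF$-stable endopermutation source. Compatibility of vertex isomorphisms under tensor products of bimodules inducing stable equivalences of Morita type forces $\chi = \psi \circ \varphi$, which together with the Krull--Schmidt decomposition of $X \tenOHc Y$ places $T$ inside $Z_0$ and identifies its source with $U$. The main obstacle is precisely this last identification: besides the $\OQ$-summand, the bimodule $j\OH j$ has summands with vertex $\Delta_\alpha Q$ for $\alpha \in \Aut_\CG(Q)$ (where $\CG$ denotes the fusion system of $c$ on $Q$ determined by $j$), which yield further non-projective summands of $X \tenOHc Y$ with vertex $\Delta(\psi\alpha\varphi)$. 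One must verify that the specific composite $M \tenOHc N$ singles out the component corresponding to $\alpha = \id$, which is a compatibility statement about how the vertex isomorphisms attached to $M$ and $N$ compose under the tensor product over $\OHc$.
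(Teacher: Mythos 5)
Your proposal correctly sets up the reduction: the unique indecomposable nonprojective summand $T$ of $M\tenOHc N$ is a summand of $X\tenOHc Y \cong \OG i\tenOP \Ind^{P\times Q}_{\Delta\varphi}(V)\tenOQ j\OH j\tenOQ \Ind^{Q\times R}_{\Delta\psi}(W)\tenOR l\OL$, and extracting the $\OQ$-summand of $j\OH j$ does exhibit $Z_0$ as a summand of $X\tenOHc Y$. But the argument then stalls at exactly the point you flag yourself, and the way you propose to close the gap is not the right one. You want to show that $T$ lands in the component of $j\OH j$ corresponding to $\alpha=\id$, invoking an unstated ``compatibility of vertex isomorphisms under tensor products.'' That compatibility is essentially the content of the lemma being proved, so as written the argument is circular; and in fact there is no reason why $T$ should sit over the $\OQ$-summand rather than over a summand $\Ind^{Q\times Q}_{\Delta\alpha}(\CO)$ of $j\OH j$ with $\alpha\in\Aut_{\CG}(Q)$ nontrivial.

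The correct resolution (and the one the paper uses) is not to rule out the twisted components but to show they are harmless: since $T$ has a vertex of order at least $|P|$, it must lie over a summand of $j\OH j$ of the form $\Ind^{Q\times Q}_{\Delta\alpha}(\CO)$ with $\alpha\in\Aut_{\CG}(Q)$ (Lemma \ref{locsource}), and then one computes, via Lemma \ref{Indtensor}, that the resulting ambient bimodule is
$\OG i\tenOP \Ind^{P\times R}_{\Delta(\psi\alpha\varphi)}\bigl(V\tenO {_{\alpha\varphi}W}\bigr)\tenOR l\OL$. Two facts now kill the twist: the $\CF$-stability of $W$ (Lemma \ref{Vvarphi-existence}) gives ${_{\alpha\varphi}W}\cong {_{\varphi}W}$, and pushing the remaining $\alpha$ onto the right-hand factor gives ${_{\alpha'}(l\OL)}\cong l\OL$ because $\alpha'$ lies in the fusion system of $d$ (Lemma \ref{locsource} again). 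Hence every twisted component is isomorphic, as a bimodule, to the untwisted one, and $T$ is a summand of $Z_0$ with vertex $\Delta(\psi\circ\varphi)$ and source $U$ regardless of which $\alpha$ occurs. Without this absorption step your proof does not go through; with it, your decomposition of $X\tenOHc Y$ becomes the paper's proof.
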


\begin{proof}
Identify $P$, $Q$, $R$ via $\varphi$, $\psi$; that is, we may assume
that $\varphi$, $\psi$ are the identity maps. With this identification,
the groups $\Delta\varphi$ and $\Delta\psi$ are both equal to
$\Delta P$. 
It follows from Lemma \ref{Vvarphi-existence} that the source 
idempotents $i$, $j$, $l$ of $b$, $c$, $d$, respectively, determine
the same fusion system $\CF$ on $P$, and that $V$, $W$ are $\CF$-stable.

Clearly $M\tenOHc N$ induces a stable equivalence of Morita type, and
hence its unique (up to isomorphism) indecomposable nonprojective 
summand $Z$ induces a stable equivalence of Morita type, and moreover,
$Z$ is isomorphic to an indecomposable summand of 
$$X\tenOHc Y =$$
$$\OG i\tenOP \Ind_{\Delta P}^{P\times P}(V) \tenOP j\OH j\tenOP
 \Ind_{\Delta P}^{P\times P}(W) \tenOP l\OL$$
Since $Z$ is indecomposable, we may replace the term
$j\OH j$ in the middle by some indecomposable $(\OP,\OP)$-bimodule
summand. Since $Z$ must have a vertex of order at least 
$|P|$,  it follows from Lemma \ref{locsource} that we may choose that 
bimodule summand of $j\OH j$ to be isomorphic to 
$\Ind^{P\times P}_{\Delta\epsilon}(\CO)$ for 
some $\epsilon\in$  $\Aut_\CF(P)$. Thus $Z$ is isomorphic to a direct
summand of 
$$\OG i\tenOP \Ind_{\Delta P}^{P\times P}(V) \tenOP 
\Ind^{P\times P}_{\Delta\epsilon}(\CO) \tenOP
 \Ind_{\Delta P}^{P\times P}(W) \tenOP l\OL$$
By Lemma \ref{Indtensor}, this is isomorphic to
$$\OG i\tenOP 
\Ind_{\Delta \epsilon}^{P\times P}(V\tenO ({_\epsilon{W}})) \tenOP 
l\OL$$
The fusion stability of $W$ implies that this is isomorphic to
$$\OG i\tenOP \Ind_{\Delta\epsilon}^{P \times P}(V\tenO W) \tenOP 
l\OL$$
By Lemma \ref{Indtensor} again, this is isomorphic to 
$$\OG i\tenOP \Ind^{P\times P}_{\Delta P}(V\tenO W)_{\epsilon^{-1}} 
\tenOP l\OL$$
hence to 
$$\OG i\tenOP \Ind^{P\times P}_{\Delta P}(V\tenO W) 
\tenOP {_\epsilon{l\OL}}$$
Now $\epsilon\in$ $\Aut_\CF(P)$, and hence Lemma \ref{locsource}
implies that ${_\epsilon{l\OL}}\cong$ $l\OL$ as $(\OP,\OL)$-bimodules.
Together it follows that $Z$ is isomorphic to a direct summand of
$$\OG i\tenOP \Ind^{P\times P}_{\Delta P}(V\tenO W) 
\tenOP l\OL $$
Since $V$, $W$ are indecomposable endopermutation $\OP$-modules, it
follows that $U$ is up to isomorphism the unique indecomposable direct
summand with vertex $P$ of $V\tenO W$. The fact that $Z$ has a vertex 
of order $|P|$ implies that $Z$ is isomorphic to a direct summand of 
$$\OG i\tenOP \Ind^{P\times P}_{\Delta P}(U) \tenOP l\OL $$
as claimed.
\end{proof} 

The vertex-source pairs of an indecomposable module over a finite 
group algebra are unique up to conjugation. For the situation in Lemma 
\ref{Vvarphi-existence}, this translates to the following statement.

\begin{lem} \label{Vvarphi-uniqueness}
Let $G$, $H$ be finite groups, $b$, $c$ blocks of $\OG$, $\OH$ with 
nontrivial defect groups $P$, $Q$, respectively, and let $i\in$ 
$(\OGb)^P$ and $j\in$ $(\OHc)^Q$ be source idempotents. Denote by $\CF$ 
the fusion system on $P$ of $b$ determined by $i$, and denote by $\CG$ 
the fusion system on $Q$ determined by $j$. Let $M$ be an indecomposable
$(\OGb, \OHc)$-bimodule inducing a stable
equivalence of Morita type. Let $\varphi$, $\psi : P\cong$ $Q$ be 
group isomorphisms, let $V$, $W$ be indecomposable endopermutation
$\OP$-modules.
Suppose that $M$ is isomorphic to a direct summand of both 
$$X = \OG i\tenOP \Ind_{\Delta\varphi}^{P\times Q}(V) \tenOQ j\OH$$
and
$$Y = \OG i\tenOP \Ind_{\Delta\psi}^{P\times Q}(W) \tenOQ j\OH\ .$$
Then $\varphi^{-1}\circ\psi\in$ $\Aut_\CF(P)$ and 
$W\cong V$ as $\OP$-modules.
\end{lem}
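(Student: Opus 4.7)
The plan is to apply Lemma~\ref{Vvarphi-comp} to the composition $M^* \otimes_{\OGb} M$, which by the stable equivalence hypothesis is isomorphic as an $(\OHc, \OHc)$-bimodule to $\OHc \oplus (\text{projective})$, using that $\OHc$ is indecomposable as an $\CO(H\times H)$-module (its endomorphism ring being the local algebra $Z(\OHc)$). Using Lemma~\ref{Indtensordual}, $M^*$ is a direct summand of
$X^* \cong \OH j \tenOQ \Ind^{Q\times P}_{\Delta\varphi^{-1}}({}_{\varphi^{-1}}V^*) \tenOP i\OG$,
while $M$ is a direct summand of $Y$. Lemma~\ref{Vvarphi-comp} then identifies $\OHc$ with an indecomposable summand, with vertex $\Delta(\psi\circ\varphi^{-1})$, of
$$\OH j \tenOQ \Ind^{Q\times Q}_{\Delta(\psi\varphi^{-1})}(U') \tenOQ j\OH,$$
where $U'$ is an indecomposable summand with vertex $Q$ of ${}_{\varphi^{-1}}(V^* \tenO W)$.

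For part~(ii), since $\OHc$ has trivial source $\CO$ at any of its vertices, uniqueness of the source up to isomorphism forces $U' \cong \CO$. Applying ${}_\varphi$, the indecomposable summand with vertex $P$ of $V^* \tenO W$ is $\CO$, hence $[V^*] + [W] = 0$ in the Dade group $D_\CO(P)$. Thus $[W] = [V]$, and since $V$ and $W$ are both indecomposable endopermutation $\OP$-modules with vertex $P$, we conclude $W \cong V$ as $\OP$-modules. For part~(i), set $\beta = \psi \circ \varphi^{-1} \in \Aut(Q)$. Multiplying the summand relation by $j$ on both sides shows that $A' := j\OH j$ is an $\CO(Q\times Q)$-module summand of $A' \tenOQ \Ind^{Q\times Q}_{\Delta\beta}(\CO) \tenOQ A'$. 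By Lemma~\ref{locsource}(iii), $\CO Q = \Ind^{Q\times Q}_{\Delta Q}(\CO)$ is an $\CO(Q\times Q)$-summand of $A'$, and the vertex-$\Delta Q$ trivial-source indecomposable summands of $A'$ are exactly $\Ind^{Q\times Q}_{\Delta\gamma}(\CO)$ for $\gamma \in \Aut_\CG(Q)$. By Lemma~\ref{Indtensor}(iii), the vertex-$\Delta Q$ trivial-source summands of the triple tensor product are precisely $\Ind^{Q\times Q}_{\Delta(\gamma_2 \beta \gamma_1)}(\CO)$ for $\gamma_1, \gamma_2 \in \Aut_\CG(Q)$. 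For $\CO Q$ to appear we need $\gamma_2 \beta \gamma_1 = \id_Q$ for some such $\gamma_1, \gamma_2$, forcing $\beta \in \Aut_\CG(Q)$. Since $\CG = {}^\varphi \CF$ by Lemma~\ref{Vvarphi-existence}(ii), this yields $\varphi^{-1} \psi = \varphi^{-1} \beta \varphi \in \Aut_\CF(P)$.

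The main obstacle is the step in part~(i) identifying the vertex-$\Delta Q$ trivial-source summands of $A' \tenOQ \Ind^{Q\times Q}_{\Delta\beta}(\CO) \tenOQ A'$: one must show such summands arise only by pairing full-vertex trivial-source summands of the two outer $A'$ factors. This rests on $A'$ being a $p$-permutation $\CO(Q\times Q)$-module (by Puig's structure theorem for source algebras), together with a Mackey-type computation that tensoring a smaller-vertex summand of $A'$ with $\Ind^{Q\times Q}_{\Delta\beta}(\CO)$ yields bimodule summands with vertex strictly smaller than $\Delta Q$.
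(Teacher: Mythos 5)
Your proof is correct and takes essentially the same approach as the paper's: tensor $M$ with its dual, apply Lemma~\ref{Vvarphi-comp} to identify the block algebra as a summand of a triple tensor product, deduce that the source parameter $U$ must be $\CO$, then combine Lemma~\ref{locsource} with a Mackey-type vertex count to force $\varphi^{-1}\psi \in \Aut_\CF(P)$. The paper works with $M \tenOHc M^* \cong \OGb \oplus (\text{projective})$ on the $G$-side whereas you use $M^* \ten_{\OGb} M \cong \OHc \oplus (\text{projective})$ on the $H$-side, which is a cosmetic mirror. One small dividend of your ordering: you get the source parameter $U'$ as a summand of the globally twisted module ${}_{\varphi^{-1}}(V^* \tenO W)$, so $U'=\CO$ gives $W \cong V$ directly, whereas the paper's ordering produces $V \tenO {}_{\psi^{-1}\varphi}(W^*)$ and then needs the $\CF$-stability of $W$ to untwist. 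The step you single out as ``the main obstacle'' is exactly what the paper compresses into the phrase ``Lemma~\ref{locsource} implies that there are $\alpha,\beta\in\Aut_\CF(P)$ such that\dots'': one must know that $A' = j\OH j$ is a $p$-permutation $\CO(Q\times Q)$-module whose indecomposable summands of vertex order $|Q|$ are precisely the $\Ind^{Q\times Q}_{\Delta\gamma}(\CO)$ with $\gamma\in\Aut_\CG(Q)$ (Lemma~\ref{locsource}(iii)), and that tensoring over $\OQ$ with a summand of strictly smaller vertex can only produce summands of strictly smaller vertex; your sketch of how to discharge this is correct and matches what the paper leaves implicit.
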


\begin{proof}
Since $\OG$, $\OH$ are symmetric algebras, we have $(\OG i)^*\cong$ 
$i\OG$ as $(\OP,\OG)$-bimodules and $(j\OH)^*\cong$ $\OH j$ as
$(\OH,\OQ)$-bimodules. Since the terms in $Y$ are all finitely
generated projective as one-sided modules, duality anticommutes with
tensor products. We therefore have 
$$Y^*\cong 
\OH j \tenOQ \Ind^{Q\times P}_{\Delta \psi^{-1}}({_{\psi^{-1}}W^*})
\tenOP i\OG\ .$$
By Lemma \ref{Vvarphi-existence} we have ${^\varphi{\CF}}=$ $\CG=$
${^\psi{\CF}}$, and the modules $V$, $W$ are $\CF$-stable. Then
$W^*$ is clearly $\CF$-stable as well. 
By the assumptions, $\OGb$ is a summand of the $(\OGb, \OGb)$-bimodule 
$M\ten_\OHc M^*$. Thus $\OGb$ is  isomorphic to a direct summand of the 
bimodule
$$X\ten_\OHc Y^*\ .$$
By Lemma \ref{Indtensor} and Lemma \ref{Vvarphi-comp}, it follows that 
$\OGb$ is isomorphic to a direct summand of 
$$\OG i \tenOP \Ind^{P\times P}_{\Delta(\psi^{-1}\circ\varphi)}(U)
\tenOP i\OG$$
where $U$ is an indecomposable direct summand of 
$$V\tenO ({_{\psi^{-1}\circ\varphi}(W^*)})$$
with vertex $P$. As an $\CO(G\times G)$-module, $\OGb$ has 
$\Delta P$ as a vertex and a trivial source (for any vertex), and hence 
$U=\CO$. Thus the $\CO(P\times P)$-module $i\OG i$, and hence also
the $\CO(P\times P)$-module
$$i\OG i \tenOP \Ind^{P\times P}_{\Delta(\psi^{-1}\circ\varphi)}(\CO)
\tenOP i\OG i$$
has an indecomposable direct $\CO(P\times P)$-summand isomorphic
to $\Ind^{P\times P}_{\Delta P}(\CO)$. Lemma \ref{locsource} implies
that there are $\alpha$, $\beta\in$ $\Aut_\CF(P)$ such that 
$$\Ind^{P\times P}_{\Delta P}(\CO)\cong  
\Ind^{P\times P}_{\Delta\alpha}(\CO)\tenOP
\Ind^{P\times P}_{\Delta(\psi^{-1}\circ\varphi)}(\CO) \tenOP
\Ind^{P\times P}_{\Delta\beta}(\CO)$$
and hence by Lemma \ref{Indtensor} we have an isomorphism of
$\CO(P\times P)$-modules
$$\Ind^{P\times P}_{\Delta P}(\CO)\cong  
\Ind^{P\times P}_{\Delta \tau}(\CO)$$
where
$$\tau = \beta\circ\psi^{-1}\circ\varphi\circ\alpha\ .$$
Thus $\Delta P$ and $\Delta\tau$ are both vertices of this
$\CO(P\times P)$-module, hence they are conjugate in $P\times P$,
and this implies that $\tau$ is an inner automorphism of $P$. 
But then $\tau$, $\alpha$, $\beta$ are all in $\Aut_\CF(P)$, and hence
so is $\psi^{-1}\circ\varphi=$ $\beta^{-1}\circ\tau\circ\alpha^{-1}$.

Finally, the fusion stability of $W$ implies that 
$$V\tenO ({_{\psi^{-1}\circ\varphi}(W^*)})\cong V\tenO W^*$$
and this module has by the above a trivial summand, forcing $W\cong$ 
$V$.
\end{proof}

Given a block algebra $B$ of a finite group algebra over $\CO$, 
we denote by $\uCE(B)$, $\uCL(B)$, $\uCT(B)$ the subgroups of 
$\uPic(B)$ of isomorphism classes of bimodules inducing a stable
equivalence of Morita type, having endopermutation, linear, trivial
source, respectively. The canonical group homomorphism $\Pic(B)\to$
$\uPic(B)$ sends the subgroups $\CE(B)$, $\CL(B)$, $\CT(B)$ to
$\uCE(B)$, $\uCL(B)$, $\uCT(B)$, respectively.

\begin{lem} \label{Vvarphi-conj}
Let $G$, $H$, be finite groups, $B$, $C$, block algebras of $\OG$, 
$\OH$, with a common nontrivial defect group $P$. Let $i$, $j$, be 
source idempotents in $B^P$, $C^P$, respectively. Suppose that the $B$ 
and $C$ have the same fusion system $\CF$ on $P$ with respect to the 
choice of  $i$ and $j$.  Let $V$ be an indecomposable endopermutation 
$\OP$-module with vertex $P$, and let $M$ be an indecomposable 
$(B,C)$-bimodule summand of 
$$\OG i\tenOP \Ind^{P\times P}_{\Delta P}(V)\tenOP j\OH$$
such that $M$ induces a Morita equivalence (resp. a stable equivalence 
of Morita type) between $B$ and $C$. The following hold.

\begin{enumerate}
\item[{\rm (i)}]
The functor $M^*\ten_B - \ten_B M$ induces a group isomorphism 
$\CE(B)\cong$ $\CE(C)$ (resp. a group isomorphism $\uCE(B)\cong$ 
$\uCE(C)$). 

\item[{\rm (ii)}]
Suppose that $V\cong$ $\Omega_P^n(\CO)$ for some integer $n$.
The functor $M^*\ten_B - \ten_B M$ induces group isomorphisms
$\CL(B)\cong$ $\CL(C)$ (resp. $\uCL(B)\cong$ $\uCL(C)$) and
$\CT(B)\cong$ $\CT(C)$ (resp. $\uCT(B)\cong$ $\uCT(C)$). 
\end{enumerate}
\end{lem}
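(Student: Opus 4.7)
The plan is first to observe that the functor $\Psi_M:=M^*\tenB-\tenB M$ is a group homomorphism $\Pic(B)\to\Pic(C)$, and similarly $\uPic(B)\to\uPic(C)$ in the stable setting, with two-sided inverse given by $\Psi_{M^*}=M\tenC-\tenC M^*$. This is standard: the isomorphisms $M\tenC M^*\cong B$ and $M^*\tenB M\cong C$ hold in the Morita case, and hold modulo projective bimodule summands in the stable case, which suffices for work in the stable category. It therefore remains to show that $\Psi_M$ maps $\CE(B)$ into $\CE(C)$, and under the hypothesis of (ii) that $\Psi_M$ also maps $\CL(B)$ into $\CL(C)$ and $\CT(B)$ into $\CT(C)$; the reverse inclusions follow by applying the same arguments to $\Psi_{M^*}$, using Lemma \ref{Indtensordual} to see that $M^*$ is a summand of $\OH j\tenOP\Ind^{P\times P}_{\Delta P}(V^*)\tenOP i\OG$, that $V^*$ is endopermutation with vertex $P$, and that $V^*\cong\Omega_P^{-n}(\CO)$ whenever $V\cong\Omega_P^n(\CO)$.

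For (i), take $N\in\CE(B)$ and choose an indecomposable representative, which by Lemma \ref{Vvarphi-existence} is a summand of $\OG i\tenOP\Ind^{P\times P}_{\Delta\varphi}(W)\tenOP i\OG$ for some $\varphi\in\Aut(P,\CF)$ and some $\CF$-stable indecomposable endopermutation $\OP$-module $W$ with vertex $P$. Two successive applications of Lemma \ref{Vvarphi-comp}, first to $M^*\tenB N$ (using the description of $M^*$ above) and then to the composition with $M$, identify the indecomposable non-projective direct summand of $\Psi_M(N)$ as a summand with vertex $\Delta\varphi$ of
$$\OH j\tenOP\Ind^{P\times P}_{\Delta\varphi}(U)\tenOP j\OH\ ,$$
where $U$ is the indecomposable direct summand with vertex $P$ of $V^*\tenO W\tenO{_\varphi V}$. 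Tensor products of endopermutation modules are endopermutation, so $U$ is endopermutation, and hence $\Psi_M(N)\in\CE(C)$ (resp.\ $\uCE(C)$).

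For (ii), assume $V\cong\Omega_P^n(\CO)$. Because $\Omega$ is functorial and commutes with twisting by automorphisms of $P$---any $\varphi\in\Aut(P)$ permutes projective $\OP$-modules and satisfies ${_\varphi\CO}\cong\CO$---we have ${_\varphi V}\cong V$ as $\OP$-modules, whence $[V^*]+[{_\varphi V}]=0$ in the Dade group $D_\CO(P)$. If $N\in\CT(B)$ we may take $W\cong\CO$; then $V^*\tenO W\tenO{_\varphi V}\cong V^*\tenO{_\varphi V}$ has $U\cong\CO$, so $\Psi_M(N)\in\CT(C)$. If $N\in\CL(B)$, then $W\cong\CO_\zeta$ for an $\CF$-stable linear character $\zeta$, which is an indecomposable endopermutation $\OP$-module of rank one with vertex $P$; we have $[U]=[V^*]+[W]+[{_\varphi V}]=[W]$ in $D_\CO(P)$, forcing $U\cong W$, and so $\Psi_M(N)\in\CL(C)$.

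The principal technical obstacle is the bookkeeping when iterating Lemma \ref{Vvarphi-comp}: one must check that $M^*\tenB N$ has a well-defined indecomposable non-projective summand of vertex of order $|P|$, so that the second application of the lemma is legitimate and identifies the non-projective part of $\Psi_M(N)$ uniquely. A conceptual subtlety is that the hypothesis $V\cong\Omega_P^n(\CO)$ in (ii) is used precisely to make $[V^*]+[{_\varphi V}]$ vanish in the full group $D_\CO(P)$; without this, conjugation by $M$ could introduce a non-trivial endopermutation twist in the source of $\Psi_M(N)$, so that $\CL$ and $\CT$ would not in general be preserved.
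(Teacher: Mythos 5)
Your proof is correct and takes essentially the same route as the paper's: conjugation by $M$ gives an isomorphism of (stable) Picard groups, and the composition formula of Lemma \ref{Vvarphi-comp}, applied twice, together with the $\Aut(P)$-stability of Heller translates of $\CO$, shows that the subgroups $\CE$, $\CL$, $\CT$ are preserved. The paper's argument is simply a much terser version of what you have written out, with your Dade-group bookkeeping for the source $V^*\tenO W\tenO{_\varphi V}$ replacing the paper's appeal to Lemma \ref{Indzeta} in part (ii).
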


\begin{proof}
Since $M$ and its dual $M^*$ induce a Morita equivalence, it follows
that the functor $M^*\ten_B - \ten_B M$ induces a group isomorphism
$\Pic(B)\cong$ $\Pic(C)$. It follows from the hypotheses on $M$ and from 
Lemma \ref{Vvarphi-comp} that this isomorphism preserves the subgroups 
determined by Morita equivalences with endopermutation sources, and
hence induces a group isomorphism $\CE(B)\cong$ $\CE(C)$. 
If $V$ is a Heller translate of $\CO$, then $V$ is stable under
any automorphism of $P$, and hence by the Lemmas \ref{Indzeta}
and \ref{Vvarphi-comp}, the isomorphism $\CE(B)\cong$ $\CE(C)$ 
restricts to isomorphisms $\CL(B)\cong$ $\CL(C)$ and $\CT(B)\cong$ 
$\CT(C)$. The argument for stable equivalences of Morita type is 
strictly analogous.
\end{proof}

\begin{rem}
With the notation of Lemma \ref{Vvarphi-conj}, if $V$ is not stable 
under $\Aut(P)$, then the functor  $M^*\ten_B - \ten_B M$ 
need not induce isomorphisms $\CL(B)\cong \CL(C)$ or 
$\CT(B)\cong \CT(C)$; see the examples \ref{nilpotentEx} and 
\ref{productExa} below.
\end{rem}

\section{Proof of Theorem \ref{thm:endoMorita}}
\label{endoMoritaSection}

We use the notation and hypotheses as in the statement of Theorem 
\ref{thm:endoMorita}. If $P=$ $\{1\}$, then $B$ is a matrix algebra
over $\CO$. Thus $\Pic(B)$ is trivial, hence all groups in
in the diagram in (ii) are trivial, and the result is clear in that
case. Assume that $P$ is nontrivial. The fact that $M$ is isomorphic 
to a summand of 
$$\OG i\tenOP \Ind^{P\times P}_{\Delta\varphi}(V)\tenOP i\OG$$
with $V$ and $\varphi$ as stated in (i)  follows from Puig's result
stated as Lemma \ref{Vvarphi-existence} above. This proves (i).

Lemma \ref{Vvarphi-uniqueness} implies that there is 
a well-defined map $\Phi$ from $\CE(\OGb)$ to 
$D_\CO(\CF,P)\rtimes \Out(P,\CF)$ as stated. Lemma \ref{Vvarphi-comp} 
shows that this map is a group homomorphism.
The kernel of $\Phi$ consists of all isomorphism classes of 
bimodules $M$ such that the corresponding pair $(V,\varphi)$ 
satisfies  $\varphi\in$ $\Aut_\CF(P)$ and $V\cong$ $\CO$. 
Since ${_\varphi{i\OG}}\cong$ $i\OG$ as $(\OP,\OG)$-bimodule whenever
$\varphi\in$ $\Aut_\CF(P)$, it follows that we may chose $\varphi=$ 
$\Id_P$. Note that $\Ind^{P\times P}_{\Delta P}(\CO)\cong$ $\OP$ as
$(\OP,\OP)$-bimodules. Thus the isomorphism classes of bimodules  $M$ in 
the kernel of $\Phi$ correspond to trivial source bimodule summands of 
$$\OG i \tenOP i\OG\ .$$
As noted in the Remark \ref{PicRemark} (b), these in turn correspond 
bijectively to the elements of $\Out_P(A)$, via the map $\Aut_P(A)\to$ 
$\CT(B)$ sending $\alpha\in$ $\Aut_P(A)$ to the isomorphism class of the 
$(B,B)$-bimodule $\OG i_\alpha \tenA i \OG$. 
Any bimodule of this form induces a Morita equivalence on $B$ because 
$\OG i$ induces a Morita equivalence between $B$ and $A$. Moreover, $A$ 
is isomorphic to a direct summand of $A\tenOP A$ 
(cf. \cite[Theorem 6.4.7]{LiBook}), hence the previous 
bimodule is isomorphic to a direct summand of $\OG i\tenOP i\OG$, where 
we make use of the fact that $\alpha$ restricts to the identity on the 
image of $\OP$ in $A$. This shows the exactness of the first row of
the diagram in the statement. Since by the previous
argument, the image of $\Aut_P(A)$ in $\CE(B)$ is contained 
in $\CT(B)$, the remaining rows of the diagram are exact as well.
This proves (ii). 

The finiteness of $\CE(B)$ follows from combining two facts:
first, a source of a bimodule $M$ inducing a Morita equivalence
on $B$ has an $\CO$-rank bounded in terms of $B$, and second,
endopermutation modules over a fixed $p$-group over a field of 
characteristic $p$ are defined over a fixed finite field and lift to 
the ring of Witt vectors of that finite field (this is a consequence 
of the classification of endopermutation modules; see 
\cite[Theorem 13.2, Theorem 14.2]{Thevguided}). 
This implies that only finitely many isomorphism classes of 
indecomposable endopermutation modules arise as sources of 
bimodules inducing Morita equivalences of $B$. Since there are only 
finitely many isomorphism classes of indecomposable modules with a 
fixed vertex-source pair, the finiteness of $\CE(B)$ follows. 
Alternatively, again using the above mentioned facts on endopermutation 
modules, one obtains the finiteness of $\CE(B)$ as a consequence of the 
finiteness of $\Pic(B)$ whenever $\CO$ is a $p$-adic ring (cf. 
\cite[Theorems (55.19, (55.25)]{CRII}).

Suppose that $\Phi$ maps  $\CT(B)$ onto $\Out(P,\CF)$.  
Let $M$ be an indecomposable direct summand of 
$$\OG i\tenOP \Ind^{P\times P}_{\Delta\varphi}(V)\tenOP i\OG$$
inducing a Morita equivalence,
for some $\varphi\in$ $\Out(P,\CF)$ and some indecomposable
endopermutation $\CO\Delta\varphi$-module $V$ (which is then
$\CF$-stable, when regarded as an $\OP$-module by 
\ref{Vvarphi-existence}).  Since $\Phi$ is assumed to map
$\CT(B)$ onto $\Out(P,\CF)$, there is an indecomposable
direct bimodule summand $N$ of
$$\OG i\tenOP \Ind^{P\times P}_{\Delta\varphi}(\CO)\tenOP i\OG$$
inducing a Morita equivalence. Since $\Phi$ is a group homomorphism,
it follows that $U=M\tenB N^*$ is a bimodule inducing a Morita 
equivalence with vertex $\Delta P$ and source $V$, regarded as an
$\OP$-module. That is, the image of the class of $U$ under $\Phi$ is
the class of the pair $(V,\Id_P)$, hence 
contained in the subgroup $D_\CO(P,\CF)$. In other words, $\Phi$ sends
the class of $U$ to the class of $V$ in $D_\CO(P,\CF)$.  By
the above, the group $\CE(B)$ is finite, and hence the image of $V$ 
in $D_\CO(P,\CF)$ has finite order. This proves (iii), which 
concludes the proof of Theorem \ref{thm:endoMorita}.

\section{Picard groups of blocks with abelian defect and cyclic
Frobenius inertial quotient} 
\label{FrobSection}

In this section we make the blanket assumption that the residue field 
$k$ is large enough for the finite groups and their subgroups which
arise here. This hypothesis is in particular needed for the Theorems 
\ref{FrobeniusPicard}, \ref{endoMoritacyclic}, \ref{endoMoritaKlein},
whose proofs are given at the end of this section.
We need the following result of Puig.

\begin{thm} [{\cite[6.8]{Puabelien}}]
\label{frobeniusstableequivalence}
\index{Frobenius inertial quotient}
Let $G$ be a finite group, $B$ a block of $\OG$, $P$ a defect group
of $b$, $i\in$ $B^{P}$ a source idempotent of $B$ and $e$ the block 
idempotent of $kC_G(P)$ satisfying $\Br_{P}(i)e\neq$ $0$. Suppose that 
$P$ is nontrivial abelian. Set $E=$ $N_G(P,e)/C_G(P)$ and suppose that 
$E$ acts freely on $P\setminus\{1\}$. Set $L=$ $P\rtimes E$. Then there 
is an indecomposable $E$-stable endopermutation $\OP$-module $V$ with 
vertex $P$ and an indecomposable $(B, \OL)$-bimodule $M$ with the
following properties.

\smallskip\noindent (i)
The bimodule $M$ is a direct summand of
$\OG i\tenOP\Ind^{P\times P}_{\Delta P}(V) \tenOP\OL$.

\smallskip\noindent (ii)
As an $\CO(G\times L)$-module, $M$ has vertex $\Delta P$
and source $V$.

\smallskip\noindent (ii)
The bimodule $M$ and its dual $M^*$ induce a stable equivalence
of Morita type between $B$ and $\OL$.
\end{thm}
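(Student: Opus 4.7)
The plan is to construct $M$ by first extracting the right endopermutation $\OP$-module $V$ from the local block theory near $P$, and then to verify the stable equivalence via a Brauer quotient analysis at every subpair. The free action hypothesis is decisive at every step, so I would begin by unpacking its consequences. Because $E$ acts freely on $P\setminus\{1\}$, the group $L = P\rtimes E$ is Frobenius with kernel $P$, and for every nontrivial $Q\leq P$ the stabilizer $N_E(Q)$ is trivial. On the block side, this (together with standard Alperin--Brou\'e theory and the identification of $E$ with the inertial quotient) forces the block $\CO C_G(P) e$ to be nilpotent with defect group $P$. By Puig's theorem on nilpotent blocks, its source algebra is isomorphic to $\End_\CO(V)\tenO \OP$ for an indecomposable endopermutation $\OP$-module $V$ with vertex $P$. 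The conjugation action of $N_G(P,e)/C_G(P) \cong E$ on $\CO C_G(P) e$ stabilizes the isomorphism class of $V$, so $V$ is $E$-stable; the free action of $E$ on $P\setminus\{1\}$ allows this $E$-stability to be promoted to a genuine action, yielding an extension of $V$ to an $\OL$-module.

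Next I would construct $M$ by exploiting the source algebra $A = iBi$. The tensor product $A\tenOP V$ is naturally an $(A,\OL)$-bimodule (using the extended $\OL$-structure on $V$ on the right), and composing with the Morita equivalence $\OG i \tenA -$ gives a $(B,\OL)$-bimodule. A direct computation using Lemma \ref{Indtensor} together with the embedding $\OP \hookrightarrow A$ identifies this with a summand of $\OG i \tenOP \Ind^{P\times P}_{\Delta P}(V)\tenOP \OL$, yielding (i). Choosing $M$ to be the indecomposable summand whose restriction to $\OP\tenO\OP^\op$ contains the diagonal piece forces, via Green's theory of vertices and sources, the vertex to be $\Delta P$ and a source to be $V$, proving (ii).

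To establish (iii) I would apply Brou\'e's criterion: the task is to exhibit isomorphisms $M\tenOL M^* \cong B\oplus X$ and $M^*\tenB M \cong \OL \oplus Y$ with $X,Y$ projective as bimodules. By Lemma \ref{Vvarphi-comp}, the (unique) indecomposable nonprojective summand of $M\tenOL M^*$ has vertex $\Delta P$ and its source is an indecomposable summand with vertex $P$ of $V\tenO V^*$; since $V$ is indecomposable endopermutation, that summand is the trivial module $\CO$. Thus the nonprojective part of $M\tenOL M^*$ has vertex $\Delta P$ and trivial source, so by Remark \ref{PicRemark} (b) it is isomorphic to a summand of $\OG i \tenOP i\OG$, which together with the fact that it induces a stable autoequivalence on $B$ forces it to be $B$ itself. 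The dual calculation on the other side gives the nonprojective part of $M^*\tenB M$, which has an analogous form and reduces to $\OL$ using that $V$, viewed as an $\OL$-module, is a summand of $\OL \tenOP V$.

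The main obstacle is controlling $X$ and $Y$, i.e., proving that the complementary summands are actually projective as bimodules and not merely of smaller defect. For this, I would perform a Brauer quotient analysis at every nontrivial subpair $(Q, e_Q)$ of $B$. The free action hypothesis ensures $N_E(Q)=1$ for every nontrivial $Q\leq P$, so the block $\CO C_G(Q) e_Q$ is again nilpotent with source algebra of the form $\End_\CO(V_Q)\tenO \OQ$ for $V_Q$ the indecomposable summand of $\Res^P_Q(V)$ with vertex $Q$. The matching structure appears on the $\OL$-side, where the corresponding local block is simply $\CO C_L(Q) = \CO Q$ tensored with the restriction. Consequently $\Br_{\Delta Q}(M \tenOL M^*)$ and $\Br_{\Delta Q}(B)$ agree for every nontrivial $Q\leq P$, forcing $\Br_{\Delta Q}(X) = 0$ at all such $Q$, and analogously for $Y$. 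Since a $B\tenO B^\op$-module whose Brauer quotients vanish at all nontrivial diagonal subgroups of $P\times P$ is projective, this yields the desired decomposition and completes the proof.
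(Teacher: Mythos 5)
This theorem is not proved in the paper: it is quoted from Puig \cite[6.8]{Puabelien}, and the text refers to \cite[Theorem 10.5.1]{LiBook} for a proof in the present notation. So your proposal has to be measured against that argument, and it has a genuine gap at its core.

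The fatal problem is your construction of $V$. You extract $V$ from the source algebra of the block $\CO C_G(P)e$. But $P$ is abelian, so $P$ is \emph{central} in $C_G(P)$; the block $e$ of $C_G(P)$ has central defect group $P$, its unique simple module is a projective $k[C_G(P)/P]$-module, hence a direct summand of $\Ind^{C_G(P)}_P(k)$ and therefore of trivial source, so the Dade module of this nilpotent block is trivial and its source algebra is just $\OP$. Your recipe thus always returns $V=\CO$, whereas the theorem requires a genuinely nontrivial $V$ in general. The correct input is the family of blocks $e_Q$ of $C_G(Q)$ for \emph{all} nontrivial $Q\le P$: each is nilpotent because the free action forces the pointwise stabiliser $C_E(Q)$ to be trivial (note it is $C_E(Q)$, not $N_E(Q)$, that vanishes — $N_E(P)=E$), each has defect group $C_P(Q)=P$ and hence a Dade module $W_Q$, and the heart of Puig's proof is the \emph{gluing} problem: producing a single $\CF$-stable $V$ whose slashed Brauer quotients $V[Q]$ (not the restrictions $\Res^P_Q(V)$, as you write) realise all the $W_Q$ simultaneously. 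This step is entirely absent from your sketch, and the related local statements are off: the source algebra of $\CO C_G(Q)e_Q$ has the form $\End_\CO(W_Q)\tenO\OP$, not $\End_\CO(V_Q)\tenO\OQ$, and $C_L(Q)=P$, not $Q$.

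The verification of (iii) also does not go through as written. Invoking Lemma \ref{Vvarphi-comp} to identify the nonprojective summand of $M\ten_{\OL}M^*$ is circular, since that lemma presupposes that $M$ already induces a stable equivalence of Morita type — which is exactly what you are trying to prove. Your closing Brauer-quotient argument is the right idea in spirit (Brou\'e's gluing criterion), but $M$ is not a $p$-permutation bimodule, so both the compatibility of $\Br_{\Delta Q}$ with the tensor product and the implication ``all Brauer quotients at nontrivial diagonal subgroups vanish, hence projective'' require the endopermutation-source extension of these facts; making that precise is the technical content of \cite[\S 9.11]{LiBook} on which the actual proof rests, and it cannot be treated as routine.
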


For a proof of this theorem using the notation and terminology above, 
see \cite[Theorem 10.5.1]{LiBook}. Note that in the situation of
Theorem \ref{frobeniusstableequivalence}, if the acting group $E$ is
abelian, then $E$ is in fact cyclic (cf. \cite[Theorem 10.3.1]{Gor}). 
The next result we need is due to Carlson and Rouquier.

\begin{pro}[{\cite[Corollary 3.3]{CaRo}}] \label{uPicFrobenius}
Let $P$ be a nontrivial abelian $p$-group and $E$ an abelian 
$p'$-subgroup of $\Aut(P)$ acting freely on $P\setminus \{1\}$.
Set $L=$ $P\rtimes E$. Denote by $\Omega$ the Heller operator on the 
category of $(\OL, \OL)$-bimodules.
Any indecomposable $(\OL, \OL)$-bimodule inducing a
stable equivalence of Morita type on $\OL$ is isomorphic to 
$\Omega^n(M)$ for some integer $n$ and some $(\OL, \OL)$-bimodule
$M$ inducing a Morita equivalence on $\OL$. Equivalently, we have 
$$\uPic(\OL) = \Pic(\OL)\cdot \langle \Omega(\OL) \rangle\ ,$$
where we identify $\Omega(\OL)$ with its image in $\uPic(\OL)$.
\end{pro}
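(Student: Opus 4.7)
The plan is to show that every indecomposable $(\OL,\OL)$-bimodule $N$ inducing a stable equivalence of Morita type has endopermutation source, and then to use the Dade-group structure forced by the free $E$-action to identify that source as a Heller translate of $\CO$ up to a contribution from $\Pic(\OL)$. Combined with Theorem \ref{thm:endoMorita}, this will yield the factorization $\uPic(\OL) = \Pic(\OL)\cdot\langle\Omega(\OL)\rangle$.

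First, I would observe that $\OL$ is its own source algebra for each of its blocks, with the fusion system $\CF$ on $P$ being the Frobenius fusion system determined by the action of $E$. The stable-equivalence analogue of Lemma \ref{Vvarphi-existence} then gives that $N$ is an indecomposable direct summand of $\OL\tenOP \Ind^{P\times P}_{\Delta\varphi}(V)\tenOP \OL$ for some $\varphi\in\Aut(P,\CF)$ and some $\CF$-stable indecomposable endopermutation $\OP$-module $V$ with vertex $P$. The fact that every stable equivalence of Morita type on $\OL$ has endopermutation source in this Frobenius-type setting follows by applying Puig's theorem to the source algebra $\OL$ of itself, using that the unique indecomposable nonprojective summand of $N\otimes_{\OL} N^*$ is isomorphic to $\OL$.

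Second, I would analyse the image of the assignment $[N]\mapsto(V,\varphi)$ in $D_\CO(P,\CF)\rtimes\Out(P,\CF)$ modulo the contribution coming from $\Pic(\OL)$. Under the free-action hypothesis, $\Out(P,\CF)=N_{\Aut(P)}(E)/E$ is accounted for entirely by Morita equivalences of $\OL$ coming from automorphisms of $L$ (each element of $N_{\Aut(P)}(E)$ normalising $E$ extends to an automorphism of $L=P\rtimes E$), so we may reduce to $\varphi=\Id_P$. The crux is then a computation of the $\CF$-stable part of the Dade group: under a free action of the $p'$-group $E$ on $P\setminus\{1\}$, the group $D_\CO(P,\CF)$ is the direct product of the infinite cyclic group generated by $[\Omega(\CO)]$ and the $\CF$-stable linear character group $\Hom(P/\foc(\CF),\CO^\times)$. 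Since the linear characters yield bimodules lying in $\Pic(\OL)$ by Remark \ref{PicRemark}(d), after modding out by this contribution we obtain $V\cong\Omega^n(\CO)$ for some integer $n$, and hence $[N]=[M]\cdot[\Omega(\OL)]^n$ in $\uPic(\OL)$ for some Morita-inducing bimodule $M$, as required.

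The main obstacle is the Dade-group computation invoked in the second step: one must show that under a free $p'$-action the $\CF$-stable part of $D_\CO(P)$ is exactly as claimed. Carlson and Rouquier prove this by an induction on $|P|$ combined with the Bouc-Th\'evenaz detection theorem, exploiting that the free $E$-action collapses the contributions from distinct elementary abelian subgroups of $P$ onto the single class $[\Omega(\CO)]$ modulo torsion. The remainder of the argument is then structural bookkeeping via the results of Section \ref{tensorSection}, in particular Lemma \ref{Vvarphi-comp} for the composition of bimodules with endopermutation source.
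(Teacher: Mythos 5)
The paper does not give a proof of this proposition; it is cited verbatim as Corollary 3.3 of Carlson--Rouquier \cite{CaRo}, and the paper's subsequent arguments (in particular Corollary \ref{uPicT}) \emph{rely} on it rather than establishing it. So there is nothing in the paper to compare your argument against directly. Evaluating your proposal on its own merits, however, there is a substantial gap at the very first step.

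Your proof begins by asserting that every indecomposable $(\OL,\OL)$-bimodule $N$ inducing a stable equivalence of Morita type has endopermutation source. Lemma \ref{Vvarphi-existence} (Puig's theorem) does not give you this: it takes an endopermutation source as a \emph{hypothesis} and then extracts the isomorphism $\varphi$ and the $\CF$-stable module $V$; it does not assert that stable equivalences of Morita type have endopermutation source. Your justification --- ``the unique indecomposable nonprojective summand of $N\otimes_{\OL} N^*$ is isomorphic to $\OL$'' --- is simply the definition of a stable equivalence of Morita type and carries no information about the vertex or source of $N$ as an $\CO(L\times L)$-module. Whether stable (or even Morita) equivalences between blocks always have endopermutation source is, in general, open. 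Worse, the inclusion $\uPic(\OL)=\uCE(\OL)$ you want here is exactly Corollary \ref{uPicT} of the paper, and that corollary is proved \emph{from} Proposition \ref{uPicFrobenius}: first one uses \ref{uPicFrobenius} to write every stable self-equivalence as $\Omega^n$ of a Morita equivalence, then \ref{Piclocal} to see that the Morita equivalence has trivial source, whence $\Omega^n(M)$ has endotrivial source. Your argument therefore runs in a circle.

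The second step also invokes an unproven Dade-group claim: that for a free $p'$-action, $D_\CO(P,\CF)$ is exactly $\Z\cdot[\Omega(\CO)]$ times the $\CF$-stable linear characters. Even granting the (correct) reduction to $\varphi=\Id_P$ via the split surjection $\CT(\OL)\to\Out(P,\CF)$ from Proposition \ref{Piclocal}, this Dade-group description is precisely the hard content one would need to prove and is not what Carlson and Rouquier establish; their argument works with endotrivial modules and the structure of the stable module category, not with the full $\CF$-stable Dade group. If you want a genuine proof of the proposition, you need an independent route into why every stable self-equivalence of $\OL$ is of endopermutation-source type --- and that is exactly where Carlson--Rouquier's specific techniques (restriction along the $p'$-index inclusion $P\leq L$, control of endotrivial modules, and the free $E$-action) do real work that the machinery of Section \ref{tensorSection} does not replace.
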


As mentioned at the beginning of Section \ref{tensorSection}, the cyclic 
subgroup $\langle \Omega(\OL) \rangle$ is in the center of 
$\uPic(\OL)$. We need to determine the structure of $\Pic(\OL)$. The 
following result is a slight refinement of results of Y. Zhou 
\cite[Theorem 14]{ZhouYY} and Hertweck and Kimmerle 
\cite[Theorem 4.6]{HeKi} in the special case where the focal subgroup
$[P,E]$ of $P$ in $P\rtimes E$ is equal to $P$. The proof follows in 
part that of \cite[Lemma 4.2]{Licyclic}. As in the aforementioned papers 
\cite{ZhouYY}, \cite{HeKi},  \cite{Licyclic}, the key step is an 
application of Weiss' criterion.

\begin{pro}
\label{Piclocal}
Let $P$ be a nontrivial abelian $p$-group and $E$ an abelian 
$p'$-subgroup of $\Aut(P)$ such that $[P,E]=P$. Set $L=$ $P\rtimes E$.
Denote by $\CF$ the fusion system of $L$ on $P$. Assume that
$\chr(\CO)=0$. Any class in $\Out(\OL)$ has a representative in 
$\Aut(\OL)$  which stabilises the image of $P$ in $\OL$ as a set, and 
any $(\OL, \OL)$-bimodule which induces a Morita equivalence has 
trivial source. We have canonical group isomorphisms
\begin{align*}
\Pic(\OL) & = \CT(\OL) \\
           & \cong \Out(\OL) \\
           & \cong \Out_P(\OL)\rtimes N_{\Aut(P)}(E)/E\ , \\ 
\Out_P(\OL)& \cong\Hom(E,k^\times)\ ,\\
\Out(P,\CF) & \cong N_{\Aut(P)}(E)/E \ .
\end{align*} 
Moreover, the inverse image of  $N_{\Aut(P)}(E)/E$ in $\Out(\OL)$
consists of all classes of automorphisms of $\OL$ which stabilise the 
trivial $\OL$-module. 
\end{pro}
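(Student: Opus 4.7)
First I would establish basicness of $\OL$: since $E$ is abelian $p'$ and $P$ is the unique Sylow $p$-subgroup of $L$, all simple $kL$-modules are $1$-dimensional (inflations of simples of $kE$), so $\OL$ is basic and the map $\alpha\mapsto[{_1\OL_\alpha}]$ induces an isomorphism $\Out(\OL)\cong\Pic(\OL)$. The fusion system $\CF$ of $L$ on $P$ satisfies $\Aut_\CF(P)=E$ (the image of $L$-conjugation on $P$, which factors through $E\subseteq\Aut(P)$) and hence $\Out_\CF(P)=E$ (since $\Inn(P)=1$), $\Aut(P,\CF)=N_{\Aut(P)}(E)$, and $\Out(P,\CF)=N_{\Aut(P)}(E)/E$. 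The hypothesis $[P,E]=P$ gives $\foc(\CF)=P$, so $\Hom(P/\foc(\CF),\CO^\times)=1$ and therefore $\CL(\OL)=\CT(\OL)$ by Remark \ref{PicRemark}(d).

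Next I would construct two families of automorphisms. For each $\psi\in N_{\Aut(P)}(E)$, there is a unique group automorphism $\tilde\psi\in\Aut(L)$ with $\tilde\psi|_P=\psi$ and $\tilde\psi|_E$ given by $\psi$-conjugation of $E\subseteq\Aut(P)$; modulo $\Inn(L)$ (which acts on $P$ precisely as $E\subseteq\Aut(P)$), the assignment $\psi\mapsto\tilde\psi$ descends to an embedding $N_{\Aut(P)}(E)/E\hookrightarrow\Out(\OL)$ whose image provides a splitting for the restriction of $\Phi$ to $\CT(\OL)$ in Theorem \ref{thm:endoMorita}(ii). For each $\zeta\in\Hom(E,k^\times)=\Hom(E,\CO^\times)$ (the equality by Hensel's lemma, as $E$ is a $p'$-group), the diagonal twist $\tau_\zeta\in\Aut_P(\OL)$ defined by $\CO$-linear extension of $xe\mapsto\zeta(e)\,xe$ (for $x\in P$, $e\in E$) maps to $\zeta$ under the canonical injection $\Out_P(\OL)\hookrightarrow\Hom(E,k^\times)$ of Remark \ref{PicRemark}(a); this forces that injection to be an isomorphism, giving $\Out_P(\OL)\cong\Hom(E,k^\times)$. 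Together, these families produce a subgroup isomorphic to $\Hom(E,k^\times)\rtimes N_{\Aut(P)}(E)/E$ of $\Out(\OL)$ whose elements are represented by automorphisms stabilising $P$ setwise.

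The technical heart is to show this subgroup is all of $\Out(\OL)$, equivalently that every bimodule $M$ inducing a Morita equivalence has trivial source. For $\alpha\in\Aut(\OL)$, set $M={_1\OL_\alpha}$; two-sided projectivity of $M$ as $\OL$-module forces its vertex as $\CO(L\times L)$-module to be a twisted diagonal $\Delta_\sigma P\leq L\times L$ for some $\sigma\in\Aut(P)$, up to conjugation. Since $P$ is abelian, $\Delta P$ is normal in $P\times P$ with quotient isomorphic to $P$, so Weiss' criterion reduces the source-triviality of $M$ to verifying that $\Res^{P\times P}_{\Delta P}(M)$ is $\CO\Delta P$-free and that $M^{\Delta P}$ is a permutation $\CO P$-module. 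Both conditions will be checked by explicit computation using the skew-group-algebra decomposition $\OL=\bigoplus_{e\in E}\OP\cdot e$ and the diagonal action $(x,x)\cdot m=xm\alpha(x)^{-1}$, combined with the identification of fixed with cofixed points in Remark \ref{WeissRem}; the hypothesis $[P,E]=P$ is the key input guaranteeing the freeness. This yields $\Pic(\OL)=\CT(\OL)\cong\Hom(E,k^\times)\rtimes N_{\Aut(P)}(E)/E$ together with the assertion that every class in $\Out(\OL)$ has a representative stabilising $P$ as a set.

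Finally, any group automorphism $\tilde\psi$ of $L$ preserves the augmentation $\OL\to\CO$ and hence fixes the trivial $\OL$-module, whereas $\tau_\zeta^\ast$ sends the trivial module to the inflation of $\zeta$ from $E$, which equals the trivial module iff $\zeta=1$. Under the semi-direct product decomposition, the inverse image of $N_{\Aut(P)}(E)/E$ in $\Out(\OL)$ thus coincides with the subgroup of classes fixing the trivial $\OL$-module. The principal obstacle is the Weiss' criterion verification in the previous paragraph; this follows the Zhou and Hertweck-Kimmerle template, suitably adapted to the weaker hypothesis $[P,E]=P$ in place of free $E$-action on $P\setminus\{1\}$.
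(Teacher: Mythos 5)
Your overall architecture matches the paper's (basicness of $\OL$, the two families of automorphisms $\tau_\zeta$ and $\tilde\psi$, Weiss' criterion for the hard surjectivity step, and the final identification of the trivial-module-stabilising classes), but the execution of the Weiss step — which you yourself flag as the technical heart and leave as "will be checked by explicit computation" — is set up with the wrong normal subgroup, and the verification would fail. You propose to apply Weiss' criterion to $M={_1{\OL}_\alpha}$ viewed as a $\CO(P\times P)$-module with $Q=\Delta P\trianglelefteq P\times P$, requiring $\Res^{P\times P}_{\Delta P}(M)$ to be free. It is not: $(x,x)$ acts on $m\in\OL$ by $m\mapsto xm\alpha(x)^{-1}$, and already for $\alpha=\id$ the element $1\in\OL$ (and every element of $P$, since $P$ is abelian) is fixed by $\Delta P$, so the restriction has trivial summands. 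More structurally, the vertex of $M$ is a twisted diagonal $\Delta_\sigma P$, which meets $\Delta P$ in a large subgroup, so relative projectivity never gives freeness on $\Delta P$. The correct choice is $Q=1\times P$ (equivalently $P\times 1$): freeness of the restriction is then automatic because $\OL$ is free as a right $\OP$-module, and the fixed points satisfy $M^{1\times P}\cong M\tenOP\CO\cong{_\alpha{(\CO E)}}$ by the fixed/cofixed identification of Remark \ref{WeissRem}.

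This also corrects your misattribution of where the hypothesis $[P,E]=P$ enters: it has nothing to do with freeness. Its role is to guarantee the \emph{permutation} condition on the fixed points: since $[P,E]=P$, no nontrivial linear character of $P$ extends to $L$, so every simple $kL$-module has $P$ in its kernel and lifts uniquely, whence $\CO E\cong\OL\tenOP\CO$ is stable under every algebra automorphism and ${_\alpha{(\CO E)}}\cong\CO E$ is a (trivial, hence permutation) $\CO(P\times P)/(1\times P)$-module. Two further points you elide: (a) after Weiss gives that ${_\alpha{\OL}}$ is a permutation $\CO(P\times P)$-module, you still must produce $\varphi\in\Aut(P)$ with ${_\alpha{\OL}}\cong{_\varphi{\OL}}$ (indecomposability as a $\CO(P\times L)$-module plus a rank count), adjust $\alpha$ by an inner automorphism so that it extends $\varphi$, and then invoke Coleman's lemma (or Puig's fusion result) to see that $\varphi$ normalises $E$; and (b) the injectivity of $N_{\Aut(P)}(E)/E\to\Out(\OL)$ also needs Coleman's lemma, since an inner \emph{algebra} automorphism stabilising $P$ need not a priori come from conjugation by a group element — your appeal to "$\Inn(L)$ acts on $P$ as $E$" does not cover units of $\OL$ outside $L$.
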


\begin{proof} 
Since $E$ is abelian, it follows that the algebra $\OL$ is basic, and
hence the canonical map $\Out(\OL)\to$ $\Pic(\OL)$ is a group 
isomorphism. Since $p'$-roots of unity in $k^\times$ lift uniquely to
roots of unity of the same order in $\CO^\times$, it follows that the 
canonical map $\CO^\times\to$ $k^\times$ induces a group isomorphism
$\Hom(E,\CO^\times)\to$ $\Hom(E,k^\times)$. Any group homomorphism
$\zeta : E\to$ $\CO^\times$ yields an algebra automorphism
$\eta$ of $\OPE$ defined by $\eta(uy)=$ $\zeta(y)uy$ for all $u\in P$ 
and all $y\in E$; in particular, $\eta\in$ $\Aut_P(\OL)$. If $\zeta$ is 
nontrivial, then the $\OL$-module ${_\eta{\CO}}$ is nontrivial, and 
hence $\eta$ is not inner. Thus the correspondence $\zeta\to\eta$ 
induces an injective group homomorphism $\Hom(E,k^\times)\to$ 
$\Out_P(\OL)$, and by \cite[14.9]{Puigmodules} this is an isomorphism. 
Note that the above arguments show that nontrivial classes in 
$\Out_P(\OL)$ do not stabilise the trivial $\OL$-module; we will make 
use of this fact later. 

If $\alpha$ is any algebra automorphism of $\OL$, then ${_\alpha\CO}$ 
is a module of rank one, hence corresponds to a group homomorphism 
$\zeta : E\to$ $\CO^\times$. Denoting by $\eta$ the algebra homomorphism 
as above, it follows that $\eta^{-1}\circ\alpha$ stabilises the trivial 
$\OPE$-module. Thus $\alpha=$ $\eta\circ(\eta^{-1}\circ\alpha)$ is a
product of an automorphism in $\Aut_P(\OL)$ and an automorphism which 
stabilises the trivial module.

It remains to show that the outer automorphism group of $\OL$ of 
automorphisms which stabilise the trivial module is canonically 
isomorphic to $N_{\Aut(P)}(E)/E$. If $\psi$ is an automorphism of $P$ 
which normalises $E$, then by elementary group theory, $\psi$ extends to 
a group automorphism of $P\rtimes E$, hence to an algebra automorphism 
$\beta$ of $\OL$ which stabilises the trivial module. Moreover,
$\beta$ normalises $\Out_P(\OL)$. The group of outer automorphisms
obtained in this way intersects $\Out_P(\OL)$ trivially, as remarked
earlier.
 
By a result of Coleman \cite{Coleman} (or by results of Puig on fusion
in block source algebras in \cite[Theorem 3.1]{Pulocsource}), 
any inner algebra automorphism of $\OPE$ which stabilises $P$
acts on $P$ as some automorphism in $E$. Therefore $\beta$ is inner if 
and only if $\psi\in$ $E$, so the above correspondence yields an 
injective group homomorphism $N_{\Aut(P)}(E)/E\to$ $\Out(\OL)$, and the
image of this homomorphism in $\Out(\OL)$ normalises $\Out_P(\OL)$
and intersects $\Out_P(\OL)$ trivially.  

We need to show that the image of this group homomorphism consists of 
all classes of automorphisms which stabilise the trivial module, and 
this is done using Weiss' criterion (see Remark \ref{WeissRem} above).  

Since $[P,E]=P$, it follows that no nontrivial linear character of $P$
extends to $L$. Thus every simple $kL$-module lifts
to a unique irreducible character, and the characters arising as lifts
of simple $kL$-modules are precisely the characters having $P$ in their
kernel. Again since $E$ is abelian, the left $kL$-module $kE$ (with 
$P$ acting trivially) is a direct sum of a set of representatives of
the isomorphism classes of simple $kL$-modules, and hence the
$\OL$-module $\CO E$, with $P$ acting trivially, is the unique lift (up
to isomorphism) of $kE$ to an $\CO$-free $\OL$-module. Thus the 
isomorphism class of $\CO E$ is stable under any algebra automorphism of 
$\OL$. Note that $\CO E\cong$ $\OL\tenOP\CO$. Let $\alpha\in$ 
$\Out(\OL)$. Then the $(\OP,\OP)$-bimodule ${_\alpha{\OL}}$ is free as 
a right $\OP$-module, and we have isomorphisms 
${_\alpha{\OL}}\tenOP\CO\cong$ ${_\alpha{\CO E}}\cong$ $\CO E$. 
This is a permutation module as a left 
$\OP$-module (since $P$ acts in fact trivially on this module). Weiss' 
criterion implies that ${_\alpha{\OL}}$ is a permutation 
$\CO(P\times P)$-module. As an $\CO(P\times L)$-module, 
$\OL$ is indecomposable, and hence ${_\alpha{\OL}}$ is indecomposable 
as well. Thus ${_\alpha{\OL}}$ is a trivial source 
$\CO(P\times L)$-module, with a `twisted diagonal' vertex of 
the form $\Delta\varphi$ for some automorphism $\varphi$ of $P$. Thus 
${_\alpha{\OL}}$ is isomorphic to a direct summand of 
$\Ind^{P\times L}_{\Delta\varphi}(\CO)\cong$ 
${_\varphi{\OL}}$, where the last isomorphism sends $(u,x)\ten 1$ to
$\varphi(u)x^{-1}$ for $u\in P$ and $x\in L$. 
Comparing ranks yields an isomorphism ${_\alpha{\OL}}\cong$ 
${_\varphi{\OL}}$ as $(\OP,\OL)$-bimodules. Any such isomorphism is in 
particular an isomorphism of right $\OL$-modules, hence is induced by 
left multiplication with an element $c$ in $\OL^\times$. The fact that 
this is also a homomorphism of left $\OP$-modules implies that 
$c\alpha(u)=$ $\varphi(u)c$ for all $u\in $ $P$. Thus after replacing 
$\alpha$ by its conjugate by $c$, we may assume that $\alpha$ extends 
$\varphi$. Let $y\in E$, and denote by $c_y$ the inner automorphism of 
$\OPE$ given by conjugation with $y$. Then 
$\alpha\circ c_y \circ\alpha^{-1}$ acts on $P$ as 
$\varphi\circ c_y \circ \varphi^{-1}$. But 
$\alpha\circ c_y \circ\alpha^{-1} = $ $c_{\alpha(y)}$ acts on $P$ also 
as the automorphism given by conjugation with $\alpha(y)$. 
As before, any inner algebra automorphism of $\OL$ which stabilises 
$P$ acts on $P$ as some automorphism in $E$. Thus $c_{\alpha(y)}$ acts
on $P$ as some element in $E$, or equivalently, $\varphi$  normalises 
$E$. It follows from the above that $\varphi$ extends to a group 
automorphism of $L$, which in turn extends to an algebra
automorphism $\beta$, whose class is by construction in the image
of the map $N_{\Aut(P)}(E)/E\to$ $\Out(P)$. Since $\alpha$ and $\beta$
induce the same action as $\varphi$, it follows that 
$\beta^{-1}\circ\alpha\in$ $\Aut_P(\OL)$. Therefore, if $\alpha$ 
stabilise the trivial $\OL$-module, so does $\beta^{-1}\circ\alpha$, and
hence $\beta^{-1}\circ\alpha$ is inner. Equivalently, $\alpha$ and
$\beta$ have in that case the same image in $\Out(\OL)$. The result 
follows.
\end{proof}

This Proposition shows that the last sequence in  Theorem 
\ref{thm:endoMorita} applied to $B=$ $\OPE$ as above is a split short
exact sequence. The hypothesis $[P,E]=P$ is in particular satisfied
if $E$ is nontrivial and acts freely on $P\setminus \{1\}$. 

\begin{cor} \label{uPicT}
Let $P$ be a nontrivial abelian $p$-group and $E$ a nontrivial cyclic 
$p'$-subgroup of $\Aut(P)$ acting freely on $P\setminus \{1\}$.
Set $L=$ $P\rtimes E$. Assume that $\chr(\CO)=0$. 
Let $M$ be an $(\OL, \OL)$-bimodule inducing a stable equivalence
of Morita type. Then $M$ has an endotrivial source as an
$\CO(L \times L)$-module. In particular, we have
$\uPic(\OL)=\uCE(\OL)$.
\end{cor}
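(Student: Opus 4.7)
The plan is to combine Propositions \ref{uPicFrobenius} and \ref{Piclocal} with a source computation for the Heller translates of the regular $(\OL,\OL)$-bimodule. First I would invoke Proposition \ref{uPicFrobenius} to write $[M]=[N]\cdot[\Omega^n(\OL)]$ in $\uPic(\OL)$, for some integer $n$ and some $(\OL,\OL)$-bimodule $N$ inducing a Morita equivalence on $\OL$. By Proposition \ref{Piclocal}, such an $N$ has trivial (hence endotrivial) source as an $\CO(L\times L)$-module. It therefore suffices to show that the indecomposable nonprojective summand of $\Omega^n(\OL)$ has endotrivial source, and then to combine the two sources via Lemma \ref{Vvarphi-comp}, whose hypotheses explicitly include stable equivalences of Morita type.

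The key step is to control the source of $\Omega^n(\OL)$ as an $\CO(L\times L)$-module. Since $E$ embeds faithfully into $\Aut(P)$ and acts freely on $P\setminus\{1\}$, the center $Z(L)$ is trivial, so $\End_{\CO(L\times L)}(\OL)=Z(\OL)=\CO$. Hence $\OL\cong\Ind^{L\times L}_{\Delta L}(\CO)$ is indecomposable as an $\CO(L\times L)$-module, with vertex a Sylow $p$-subgroup of $\Delta L$ (which I may take to be $\Delta P$) and with trivial source $\CO$. Under the canonical algebra isomorphism $\OL\tenO \OL^{\op}\cong \CO(L\times L)$, projective $(\OL,\OL)$-bimodules correspond exactly to projective $\CO(L\times L)$-modules, so the bimodule Heller functor agrees with the $\CO(L\times L)$-module Heller functor up to projective summands, which do not affect vertex or source. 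Standard compatibility of Heller translates with Green correspondence then shows that the indecomposable nonprojective summand of $\Omega^n(\OL)$ has vertex contained in $\Delta P$ and source an indecomposable summand of $\Omega^n_{\Delta P}(\CO)$, which is endotrivial.

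To finish, I would apply Lemma \ref{Vvarphi-comp} to the product $N\otimes_{\OL}\Omega^n(\OL)$: it expresses the source of the nonprojective indecomposable summand as the vertex-$\Delta P$ summand of $\CO\tenO \Omega^n_{\Delta P}(\CO)\cong \Omega^n_{\Delta P}(\CO)$, which is still endotrivial. Therefore $[M]\in\uCE(\OL)$, and combined with the obvious inclusion $\uCE(\OL)\subseteq\uPic(\OL)$ this yields $\uPic(\OL)=\uCE(\OL)$. The main obstacle I anticipate lies in the middle paragraph: the identification of the bimodule Heller translate with the $\CO(L\times L)$-module Heller translate and the careful tracking of vertex and source through this functor are the delicate technical points; once these are in place, the rest of the argument is essentially formal.
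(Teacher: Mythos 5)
Your argument is essentially the same as the paper's, which simply observes that, by Proposition \ref{uPicFrobenius}, any stable self-equivalence is stably isomorphic to $\Omega^n(N)$ for some Morita bimodule $N$, that $N$ has trivial source by Proposition \ref{Piclocal}, and hence that $\Omega^n(N)$ has as source a Heller translate of the trivial module, which is endotrivial. The paper works directly with $\Omega^n(N)$, avoiding the detour through $\Omega^n(\OL)$ and the subsequent tensor computation via Lemma \ref{Vvarphi-comp}; your route is correct but somewhat longer.

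One local error in your second paragraph: from $Z(L)=\{1\}$ you conclude $Z(\OL)=\CO$, but this is false for any nontrivial $L$ (the centre of $\OL$ is free of rank equal to the number of conjugacy classes of $L$). What is actually needed to conclude that $\OL$ is indecomposable as an $\CO(L\times L)$-module is that $Z(\OL)$ is local, equivalently that $\OL$ has a single block. That does hold here, since $O_{p'}(L)=1$ and the block idempotents of $\OL$ lie in $\CO C_L(P)=\OP$, which is a local ring; but it is not a consequence of $Z(L)=1$ in the way you wrote it. Once corrected, the rest of your argument goes through.
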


\begin{proof}
If $M$ induces a Morita equivalence, then $M$ has a trivial source by
Proposition \ref{Piclocal}. Thus the Heller translates of $M$ have
as source Heller translates of the trivial module (for a vertex),
and these are endotrivial. The result follows.
\end{proof}

\begin{proof}[Proof of Theorem \ref{FrobeniusPicard}]
We use the notation of Theorem \ref{FrobeniusPicard}, and we set
$L=$ $P\rtimes E$. With the notation of Theorem 
\ref{frobeniusstableequivalence}, since $M$ induces a stable equivalence 
of Morita type, it follows that the functor $M^*\tenB - \tenB M$ induces 
a group isomorphism 
$\uPic(B)\cong$ $\uPic(\OL)$. By Corollary \ref{uPicT}, we have
$\uPic(\OL)=$ $\uCE(\OL)$. Lemma \ref{Vvarphi-conj} implies that 
$\uPic(B)=$ $\uCE(B)$; in particular, we have $\Pic(B)=$ $\CE(B)$. 
The above isomorphism $\uPic(B)\cong$ $\uPic(\OL)$ restricts to  
an injective group homomorphism $\Pic(B)\to$ $\uPic(\OL)$.
Since $\Pic(B)=$ $\CE(B)$, the group $\Pic(B)$ is a finite subgroup
of $\uPic(\OL)=$ $\Pic(\OL)\cdot \langle\Omega(\OL) \rangle$, where
the last equality uses Proposition \ref{uPicFrobenius}. 
We show next that the image of the group homomorphism
$$\Pic(B) \to \uPic(\OL)$$
induced by $M^*\tenB -\tenB M$ is contained in $\Pic(\OL)$. 

If $P$ is not cyclic, then $\langle \Omega(\OL) \rangle$ is an infinite
cyclic central subgroup of $\uPic(\OL)$. Thus any finite subgroup of
$\uPic(\OL)$ is contained in $\Pic(\OL)$, which has the structure
as stated by Proposition \ref{Piclocal}. Thus if $P$ is not cyclic,
then the functor $M^*\tenB - \tenB M$ induces an injective group
homomorphism $\Pic(B)\to$ $\Pic(\OL)$.

If $P$ is cyclic, this is true as well, but requires a slightly 
different argument since in that case $\OL$ is periodic as a bimodule 
(see e. g. \cite[Lemma 4.1]{Licyclic}), and  hence $\uPic(\OL)$ is 
already a finite group. If $P$ is cyclic, then the assumption $E\neq$ 
$\{1\}$ implies that $|P|\geq$ $3$. The algebra $kL$ is a symmetric 
Nakayama algebra, and thus even powers of the Heller operator on $kL$ 
permute the isomorphism classes of simple modules. It follows that 
every even power of the Heller operator on $\OL$ is induced by a Morita 
equivalence. Hence, in order to show that the $(\OL,\OL)$-bimodule
$M^*\tenB N\tenB M$ induces a Morita equivalence, we may replace $N$ by 
any even Heller translate as a $(B,B)$-bimodule. Corollary 
\ref{OmegaMoritaCor} below implies we may assume that $N\tenB-$ 
stabilises the isomorphism classes of all finitely generated 
$k\tenO B$-modules. It follows that the stable equivalence of Morita 
type given by $M^*\tenB N\tenB M$ stabilises the isomorphism classes of 
all finitely generated $kL$-modules, hence is a  Morita equivalence by 
\cite[Theorem 2.1]{Listable}. 

Back to the general case, it follows from Remark \ref{PicRemark} (b) 
that the canonical image of $\Out_P(A)$ in $\Pic(B)$ is mapped to a 
subgroup of $\Hom(E,k^\times)$ under the injective group homomorphism 
$\Pic(B)\to$ $\Pic(\OL)$. This shows the commutativity of the left 
rectangle in the statement of Theorem \ref{FrobeniusPicard}. For the 
commutativity of the right rectangle in that diagram, let $N$ be a 
$(B,B)$-bimodule inducing a Morita equivalence on $B$. Let $U$ be an 
indecomposable endopermutation $\OP$-module with vertex $P$ and 
$\varphi\in$ $N_{\Aut(P)}(E)$ such that the image of $[N]$ in 
$D_\CO(P,\CF)\rtimes N_E$ under the map $\Phi$ is equal to 
$[U][\varphi]$, where $[U]$ is the class of $U$ in the Dade group and 
$[\varphi]$ is the image of $\varphi$ in $N_E$. The map $\Psi$ sends 
$[N]$ to the class of the $(\OL, \OL)$-bimodule $N'=$ 
$M^*\tenB N\tenB M$. By Lemma \ref{Vvarphi-comp} the image of $[N']$ 
in $D_\CO(P,\CF)\rtimes N_E$ is
$$[V^*]\cdot [U]\cdot [\varphi]\cdot [V] = 
[V^*\tenO U\tenO {^\varphi{V}}] \cdot [\varphi] =[\varphi]$$
where the last equality uses the fact that $\Pic(\OL)=$ $\CT(\OL)$
from Proposition \ref{Piclocal}. This shows the commutativity of the
right rectangle in the diagram and concludes the proof of Theorem 
\ref{FrobeniusPicard}. 
\end{proof}

Let $G$ be a finite group and $b$ a block of $kG$ with a nontrivial 
cyclic defect group $P$ and inertial quotient $E$.  We collect some 
well-known results on cyclic blocks, going back to Brauer, Dade, and 
Green. Our notation follows \cite{Licyclic} (where proofs and further 
references can be found).
 
Set $B=$ $kGb$, and denote by $I$ a set of representatives of the 
conjugacy classes of primitive idempotents in $B$. We have $\ell(B)=$ 
$|E|=$ $|I|$. For any $i\in$ $I$, the projective indecomposable 
$B$-module $Bi$ has uniserial submodules $U_i$, $V_i$ such that 
$U_i+V_i=$ $J(B)i$ and $U_i\cap V_i=$ $\soc(Bi)$. One may choose
notation such that $\Omega(U_i)\cong$ $V_{\rho(i)}$ for some 
$\rho(i)\in$ $I$ and $\Omega(V_i)\cong$ $U_{\sigma(i)}$ for some 
$\sigma(i)\in$ $I$. The maps $\rho$, $\sigma$ defined in this way are 
then permutations of $I$, and $\rho\circ\sigma$ is a transitive cycle 
on $I$. The Brauer tree of $B$ is defined as follows. Each vertex 
corresponds to exactly one $\rho$-orbit, or one $\sigma$-orbit. The 
edges are labelled by the elements in $I$, in such a way that the edge 
$i$ connects the $\rho$-orbit $i^\rho$ of $i$ with the $\sigma$-orbit 
$i^\sigma$ of $i$.  The permutations $\rho$ and $\sigma$ induce cyclic 
permutations of the set of edges emanating from a fixed vertex. If 
$|P|\geq$ $3$, then the $2 |E|$ modules $U_i$, $V_i$, $i\in I$, are 
pairwise nonisomorphic, because they correspond to the simple 
$k(P\rtimes E)$-modules and their Heller translates through a stable 
equivalence of Morita type  between $B$ and $k(P\rtimes E)$.  In 
particular, if $|P|\geq$ $3$, then the modules $U_i$, $V_i$ all have 
period $2 |E|$, for any even integer $n$ and any $i\in$ $U_i$ there is 
$j\in$ $I$ such that $\Omega^n(U_i)\cong$ $U_j$, and for any odd  
integer $n$  and any $i\in$ $I$ there is $j\in I$ such that
$\Omega^n(U_i)\cong$ $V_j$. 

\begin{pro} \label{OmegaMorita}
Let $G$ be a finite group and $b$ a block of $kG$ with a cyclic defect 
group $P$ of order at least $3$. Set $B=$ $kGb$. Let $n$ be an integer 
such that the $(B,B)$-bimodule $M=$ $\Omega^n_{B\tenk B^\op}(B)$ induces
a Morita equivalence. Then $n$ is even. As a $k(G\times G)$-module, $M$ 
has vertex $\Delta P=$ $\{(u,u)\ |\ u\in P\}$ and trivial source. 
\end{pro}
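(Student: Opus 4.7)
The plan is to verify the two claims of the Proposition -- the parity of $n$, and the vertex-source description of $M$ as a $k(G\times G)$-module -- in turn.

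For the parity, I would first observe that $\Omega^n(B)$ is a direct summand of a projective $(B,B)$-bimodule on the left, hence projective as a left $B$-module, so that the functor $\Omega^n(B)\tenB -$ is exact and agrees on the stable module category of $B$ with the Heller operator $\Omega^n$. If $M$ induces a Morita equivalence, then $M\tenB S_i$ is a simple $B$-module for each simple $S_i$, and a comparison shows that $\Omega^n(S_i)$ in minimal form must be a simple module. Next I would use the Brauer tree description recalled just before the Proposition. For $|P|\geq 3$ the module $\Omega(S_i)=\mathrm{rad}(Be_i)=U_i+V_i$ is indecomposable of composition length at least two (otherwise $Be_i$ would have Loewy length two, forcing $|P|=2$). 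On the other hand, since $B$ is symmetric, $\Omega^2$ equals the Auslander--Reiten translate $\tau$ on the stable module category, and $\tau$ preserves the outer boundary of the stable Auslander--Reiten tube attached to $B$, which in the cyclic-defect case consists precisely of the isoclasses of simple $B$-modules. Hence $\Omega^2$ permutes the simples, and for $n=2k+1$ odd one has $\Omega^n(S_i)=\Omega(\Omega^{2k}(S_i))=\Omega(S_{\pi(i)})=\mathrm{rad}(Be_{\pi(i)})$, which is not simple. This forces $n$ to be even.

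For the vertex and source, I would use the standard fact that $B=kGb$, regarded as a $k(G\times G)$-module via the diagonal embedding $G\hookrightarrow G\times G$, has vertex $\Delta P$ and trivial source. Heller translates of indecomposable nonprojective modules preserve the vertex up to conjugacy, so $\Delta P$ may be taken as a vertex of $\Omega^n(B)$; the corresponding source is then the Heller translate $\Omega^n_{kP}(k)$ under the identification $k\Delta P\cong kP$, sending the trivial source $k$ of $B$ to $k$. For $P$ cyclic the trivial $kP$-module has $\Omega$-period two, so $\Omega^n_{kP}(k)\cong k$ for every even $n$. Combined with the parity just established, the source of $M$ is trivial.

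The main obstacle will be the precise verification that $\Omega^2$ permutes the isoclasses of simple $B$-modules in the cyclic-defect case with $|P|\geq 3$. This is standard from the Auslander--Reiten theory of the single stable tube attached to a Brauer tree algebra of finite representation type, where the simples occupy the outer boundary and hence are permuted by $\tau=\Omega^2$; alternatively one may give a direct combinatorial argument using the permutations $\rho$, $\sigma$ on $I$ and the walks $U_i$, $V_i$ around the tree recalled in the paragraph preceding the Proposition.
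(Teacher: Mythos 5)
The parity argument contains a genuine gap. The step you yourself flag as ``the main obstacle'' --- that $\tau=\Omega^2$ permutes the isomorphism classes of simple $B$-modules because the simples form the outer boundary of the stable Auslander--Reiten tube --- is false for a general block with cyclic defect group. It holds when the Brauer tree is a star (e.g.\ for $k(P\rtimes E)$, which is a symmetric Nakayama algebra), but not otherwise: the rim of the tube consists of certain uniserial hook modules, and each $\tau$-orbit (row of the tube) in general mixes simple and non-simple modules. A concrete counterexample is the principal block of $kA_5$ with $p=5$: the projective covers are $P_k$ with Loewy series $k,\,S,\,k$ and $P_S$ with Loewy series $S,\,k\oplus S,\,S$, where $S$ is the $3$-dimensional simple module; one computes $\Omega(k)\cong J(B)e_k$ (uniserial with factors $S,k$) and then $\Omega^2(k)\cong$ the uniserial module with composition series $S,S$, which is not simple. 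Since your deduction for odd $n$ requires $\Omega^{n-1}$ to send simples to simples, the first half of your proof does not go through. (The second half --- vertex $\Delta P$ preserved under $\Omega$, source $\Omega^n_{kP}(k)\cong k$ once $n$ is even --- is correct and is exactly the paper's argument.)

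The repair, which is the route the paper takes, is to exploit the hypothesis more fully at the level of the Brauer tree rather than the AR tube. Since $M=\Omega^n_{B\tenk B^\op}(B)$ induces a Morita equivalence, the permutation it induces on the simple modules is an automorphism of the Brauer tree; because $|P|\geq 3$ this automorphism fixes a vertex (by \cite[7.2]{KeLichar}), i.e.\ it stabilises some $\rho$-orbit or $\sigma$-orbit of edges. Taking an edge $i$ at that fixed vertex, one gets $M\tenB U_i\cong U_{i'}$ with $i'$ in the same orbit, hence $U_{i'}\cong\Omega^m(U_i)$ for some \emph{even} $m$, while also $M\tenB U_i\cong\Omega^n(U_i)$. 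Since the $2|E|$ modules $U_i,V_i$ are pairwise nonisomorphic and $U_i$ has $\Omega$-period $2|E|$, this forces $2|E|\mid n-m$, so $n$ is even. The key combinatorial input replacing your (false) claim about $\Omega^2$ is that $\Omega^{\mathrm{even}}$ sends $U$-modules to $U$-modules and $\Omega^{\mathrm{odd}}$ sends them to $V$-modules, together with the fixed-vertex property of tree automorphisms.
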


\begin{proof}
Since the functor $M\tenB-$ is an equivalence, it permutes the
isomorphism classes of simple modules in such a way that the induced 
permutation of the edges of the Brauer tree is a tree automorphism. By 
\cite[7.2]{KeLichar} this automorphism stabilises a vertex (this is 
where we use that $|P|\geq 3$). Thus there is a $\rho$-orbit or
a $\sigma$-orbit which is stabilised by this automorphism.  Suppose
that the $\rho$-orbit $i^\rho$ of an edge $i$ is stabilised. Thus 
$M\tenB U_i\cong$ $U_{i'}$ for some $i'$ belonging to the $\rho$-orbit 
of $i$. Note that $M\tenB U_i\cong$ $\Omega^n(U_i)$. By the above,
we have $U_{i'}\cong$ $\Omega^m(U_i)$ for some even integer $m$.
Thus $\Omega^{n-m}(U_i)\cong$ $U_i$.  Since $U_i$ has period
$2 |E|$, it follows that $2 |E|$ divides $n-m$; in particular, $n-m$ is
even. Since also $m$ is even, so is $n$. A similar argument 
applies if the Brauer tree automorphism stabilises a $\sigma$-orbit.
The $k(G\times G)$-module $B$ has vertex $\Delta P$ and trivial
source. Since the trivial $kP$-module $k$ has  period $2$ and since
$n$ is even, it  follows that $M$ has vertex $\Delta P$ and trivial 
source. 
\end{proof}

\begin{cor} \label{OmegaMoritaCor}
Let $G$ be a finite group and $b$ a block of $\OG$ with a nontrivial 
cyclic  defect group $P$. Set $B=$ $\OGb$. Let $N$ be a $(B,B)$-bimodule
inducing a Morita equivalence. Then there exists an integer $n$ such 
that the bimodule $N'=$ $\Omega^n_{B\tenO B^\op}(N)$ induces a Morita 
equivalence with the property that the functor $N'\tenB -$ stabilises 
the isomorphism class of every finitely generated $k\tenO B$-module. 
Moreover, if $|P|\geq$ $3$, then any integer $n$ with this property is 
even.
\end{cor}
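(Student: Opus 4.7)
The plan is to reduce modulo the maximal ideal of $\CO$ and refine the argument from the proof of Proposition~\ref{OmegaMorita}. Set $\bar{B} = k \tenO B$ and $\bar{N} = k \tenO N$; then $\bar N$ induces a Morita self-equivalence of $\bar B$, permuting the isomorphism classes of simple $\bar B$-modules (indexed by $I$) by a bijection $\pi$ which is an automorphism of the Brauer tree of $\bar B$. The case $|P|=2$ is immediate: then $|E|=1$, the block $\bar B$ is Morita equivalent to $kC_2$, and the only two indecomposable $\bar B$-modules are necessarily preserved by any Morita self-equivalence, so one takes $n=0$.

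Assume $|P|\geq 3$. By \cite[7.2]{KeLichar}, $\pi$ stabilises a vertex $v$ of the Brauer tree. Combined with the rigidity property that any Morita self-equivalence of a Brauer tree algebra preserves the cyclic ordering of edges at each vertex, $\pi$ acts as a rotation at $v$. On the other hand, from the description of $\Omega$ on the modules $U_j$ recalled before Proposition~\ref{OmegaMorita}, the Morita equivalences arising from even Heller powers of the regular $(\bar B, \bar B)$-bimodule act on the edges via powers of the transitive cycle $\sigma\rho$. Hence one can choose an even integer $n$ such that $\Omega^n_{\bar B \tenk \bar B^{\op}}(\bar B)$ is a Morita equivalence (Proposition~\ref{OmegaMorita} controls the parity and the Brauer tree is finite) with induced action on simples equal to $\pi^{-1}$. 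Then $\Omega^n(\bar N) \otimes_{\bar B} -$ fixes every simple; since every indecomposable non-projective $\bar B$-module for a cyclic block lies in the Heller orbit of a simple and $\Omega^n(\bar N)\otimes_{\bar B} -$ commutes with $\Omega$, it fixes the isomorphism class of every finitely generated $\bar B$-module. Lifting back, $N' = \Omega^n(N)$ has $\Omega^n(\bar N)$ as its $\mathfrak p$-adic reduction, and the Morita-equivalence property lifts from $k$ to $\CO$ by the standard lifting arguments for symmetric $\CO$-orders (projectivity of one-sided modules and invertibility of the canonical evaluation maps both pass between $\CO$ and $k$ via Nakayama's lemma).

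For the moreover statement, suppose $N' = \Omega^n(N)$ induces a Morita equivalence on $B$. Then $\Omega^n(N)\tenB N^{-1}$ is a Morita equivalence, hence an indecomposable $(B,B)$-bimodule. Its stable class coincides with that of $\Omega^n(B)$, which is also indecomposable and non-projective; by the uniqueness of indecomposable non-projective representatives within a stable equivalence class, $\Omega^n(N)\tenB N^{-1} \cong \Omega^n(B)$, so $\Omega^n(B)$ is itself a Morita equivalence. Proposition~\ref{OmegaMorita} then forces $n$ to be even.

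The main obstacle in this plan is the rigidity identification of $\pi$ with a rotation (hence with a power of $\sigma\rho$). This combines the vertex-stabilisation result \cite[7.2]{KeLichar} with the claim that any Morita self-equivalence of a Brauer tree algebra induces a tree automorphism respecting the cyclic planar structure at each vertex, which must be argued from the preservation of $\mathrm{Ext}^1$-orientations together with the specific shape of the Ext-quiver of a Brauer tree algebra.
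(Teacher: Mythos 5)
Your proof takes a genuinely different route for the existence statement.  The paper disposes of that part in one line by citing Linckelmann's earlier result \cite[5.1]{Licyclic}, which says precisely that any Morita self-equivalence of a cyclic block becomes, after an appropriate Heller twist, one that fixes the isomorphism classes of all finitely generated modules over the reduction.  You instead attempt to rebuild this from the Brauer tree combinatorics.  The ``moreover'' part is essentially identical to the paper's: both arguments observe that $\Omega^n(N)\ten_B N^*\cong\Omega^n_{B\ten_\CO B^{\mathrm{op}}}(B)$ is then a Morita equivalence and apply Proposition~\ref{OmegaMorita}.  (Your detour through stable equivalence classes and uniqueness of the nonprojective summand is fine but unnecessary, since the isomorphism holds on the nose.)

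There are two gaps in the reconstruction of \cite[5.1]{Licyclic}.  The first you acknowledge: the claim that a Morita self-equivalence of a Brauer tree algebra induces a tree automorphism that respects the planar (cyclic) structure at each vertex.  This is true --- the permutations $\rho$ and $\sigma$ of the paragraph preceding Proposition~\ref{OmegaMorita} are defined purely in terms of the module category via the Heller operator, hence are transported by any Morita equivalence --- but the heuristic you give (``$\mathrm{Ext}^1$-orientations'') is not yet a proof.  The second gap is more serious and you do not flag it: having established that $\pi$ is a rotation about the stabilised vertex $v$, you assert that there is an even $n$ for which $\Omega^n_{\bar B\ten_k\bar B^{\mathrm{op}}}(\bar B)$ induces the action $\pi^{-1}$ on simples, with the parenthetical justification ``Proposition~\ref{OmegaMorita} controls the parity and the Brauer tree is finite.''  That justifies the parity of any such $n$ and the finiteness of the relevant group, but it does not show that $\pi^{-1}$ actually lies in the cyclic group of tree automorphisms realised by the powers $\Omega^n(\bar B)$.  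The permutation $\Omega^2$ induces on the modules $U_i$ is $\sigma\rho$, which is a transitive cycle on all $|E|$ edges and is not a tree automorphism at all, whereas $\pi$ is a rotation about $v$, i.e.\ a power of $\rho$ (or $\sigma$) restricted to the edges at $v$.  Moreover, when the exceptional multiplicity exceeds $1$, $\Omega^2(\bar B)$ need not even induce a Morita equivalence (the $\Omega^2(S_i)$ need not be simple), so it is not clear which rotations are reachable via $\Omega^n(\bar B)$ with $n$ even.  Making this work amounts to reproving \cite[5.1]{Licyclic}, which is why the paper simply cites it.
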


\begin{proof}
It follows from \cite[5.1]{Licyclic} that there is an integer $n$ such 
that $N'=$ $\Omega^n_{B\tenO B^\op}(N)$ induces a Morita equivalence 
with the property that the functor $N'\tenB -$ stabilises the 
isomorphism class of every finitely generated $k\tenO B$-module. 
Then the bimodule $N'\tenB N^*\cong$ $\Omega^n_{B\tenO B^\op}(B)$ 
induces a Morita equivalence as well. Therefore, if $|P|\geq$ $3$, 
then Proposition \ref{OmegaMorita} implies that $n$ is even.
\end{proof}

\begin{proof}[{Proof of Theorem \ref{endoMoritacyclic}}]
Let $G$ be a finite group and $b$ a block of $\OG$ with a nontrivial 
cyclic  defect group $P$ and nontrivial inertial quotient $E$.  
Set $B=$ $\OGb$. Since $E$ is nontrivial, the block $B$ is not 
nilpotent, and hence $|P|\geq$ $3$. 
Let $M$ be a $(B, B)$-bimodule inducing a Morita equivalence.  
Since $\Pic(B)=$ $\CE(B)$, the vertices of $M$ are twisted diagonal
subgroups of $P\times P$ isomorphic to $P$.
We need to show that $M$ has a trivial source, for some vertex. 
Since the trivial $\OP$-module has period $2$, it follows that $M$
has a trivial source if and only $\Omega^n_{B\tenO B^\op}(M)$ has a
trivial source for some (and then necessarily any) even integer $n$. 
Therefore, by Corollary \ref{OmegaMoritaCor} we may assume that 
the functor $M\tenB-$ stabilises the isomorphism classes of all 
finitely generated $k\tenO B$-modules.  
By \cite[4.3, 5.6]{Licyclic}, the subgroup of $\Pic(B)$ of bimodules
with this property is canonically isomorphic to $\Aut(P)/E$, and by
\cite[5.8]{Licyclic}, the bimodules with this property correspond to 
algebra automorphisms of $A$ extending group automorphisms of
$P$, and hence these bimodules have trivial source. 
By the Remark \ref{PicRemark}, the group $\Out_P(A)$ corresponds to
the subgroup of $\Pic(B)$ of isomorphism classes of bimodules 
of the form $\Omega^n_{B\tenO B^\op}(B)$, where $n$ is an integer
such that this bimodule induces a Morita equivalence. Note that in 
that case we have $\Omega^n_{B\tenO B^\op}(M)\cong$ 
$\Omega^n_{B\tenO B^\op}(B)\tenB M \cong$ 
$M\tenB \Omega^n_{B\tenO B^\op}(B)$. Thus $\Out_P(A)$ is indeed
a direct factor of $\Pic(B)$.
\end{proof}


\begin{proof}[Proof of Theorem \ref{endoMoritaKlein}]
Let $G$ be a finite group and $b$ a non-nilpotent block of $\OG$ with a 
Klein four defect group $P$. Set $B=$ $\OGb$. Every endopermutation
$\OP$-module is a Heller translate of a rank one module. 
By \cite[Theorem 1.1]{Likleinfour}, $B$ is Morita equivalent to either 
$\CO A_4$ or the principal block algebra of $\CO A_5$, via a Morita 
equivalence with source a Heller translate of the trivial module.
(Using the classification of finite simple groups 
it is shown in \cite{CEKL} that there is even a Morita equivalence with 
trivial source in these cases, but this is not needed for the present
proof). By Lemma \ref{Vvarphi-conj}, we may assume that $B$ is equal 
to one of these two algebras. Note that then $B$ is its own
source algebra.

If $B=$ $\CO A_4$, the result follows from Proposition \ref{Piclocal};
indeed, we have $\Out_P(B)\cong$ $C_3$, and $\Aut(P)\cong S_3$, 
hence $\Aut(P)/E\cong$ $S_3/C_3\cong$ $C_2$, which acts nontrivially on
$\Out_P(B)$ and hence yields $\Pic(B)\cong$ $S_3$.

Suppose that $B$ is the principal block algebra $\CO A_5 b_0$. 
As a very special case of Theorem \ref{frobeniusstableequivalence},  
identifying $\CO A_4$ to its image in this algebra, induction and 
restriction yields a splendid stable equivalence of 
Morita type between $B$ and $\CO A_4$ (this is well-known and
easy to verify directly). That is, we are in a situation of Lemma
\ref{Vvarphi-conj} in which $V=\CO$. By Theorem \ref{FrobeniusPicard}
we have an embedding $\Pic(B)\to$ $\Pic(\CO A_4)\cong$ $C_3\rtimes C_2$
which maps $\Out_P(B)$ to the subgroup $C_3$.  

We show next that $\Out_P(B)$ is trivial. Write $A_4=$ $P\rtimes E$
with $E\cong$ $C_3$. We need to show that the nontrivial 
automorphisms of the form $uy\mapsto \zeta(y)uy$ of $\CO A_4$ (with
$\zeta : E\to$ $k^\times$ a group homomorphism, $u\in P$, $y\in E$ as 
above) do not extend to $B$. Indeed, if they did, then the three 
trivial source $B$-modules corresponding (through the stable equivalence 
between $\CO A_4$ and $B$) to the three linear characters of $\CO A_4$  
would have to have the same rank because they would be transitively 
permuted by this automorphism group. This is not possible: the trivial 
character of $\CO A_4$ corresponds to the trivial character of $B$, 
while the two nontrivial characters of $\CO A_4$ correspond to 
$B$-modules of $\CO$-rank greater than $1$ (because $A_5$ has no 
nontrivial character of rank $1$). Thus $\Out_P(B)$ is trivial.

It follows that $\Pic(B)$ embeds into $C_2$. An automorphism of $B$ 
given by conjugation with an involution in $S_5$ yields a nontrivial 
element in $\Pic(B)$, whence $\Pic(B)\cong$ $C_2$.
\end{proof}

\section{Proof of Theorem \ref{Pic-finite}} 
\label{PicfiniteSection}

We assume in this Section that $\CO$ has characteristic zero.
For the description of $\Pic(B)$ as
colimits of finite groups in Theorem \ref{Pic-finite}, we will need
the following lemmas. One of the key ingredients is Brauer's Lemma 1
from \cite{Brauer41}, implying that if $K$ is a field of 
characteristic $0$ containing all $p'$-order roots of unity in an
algebraic closure, then the Schur indices over $K$ of absolutely 
irreducible characters of finite groups are all equal to $1$.

\begin{lem}  \label{centralidem} 
Let $K$ be a field  of characteristic $0$ 
containing all roots of unity of order prime to $p$ in an algebraic 
closure of $K$ and let $K_0$ be a subfield of $K$ containing all
algebraic elements in $K$. 
Let $G$ be a finite group and $X$ a finite-dimensional $KG$-module.
Then there is a $K_0G$-module $X_0$ such that $X\cong$ $K\ten_{K_0} X_0$.
\end{lem}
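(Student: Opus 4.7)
The plan is to reduce to the case of a simple $KG$-module $X$ (using semisimplicity of $KG$ in characteristic zero) and then to construct a simple $K_0G$-module $X_0$ having the same character as $X$; since a module over the semisimple algebra $KG$ is determined up to isomorphism by its character, this will give $K \otimes_{K_0} X_0 \cong X$.

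The structural fact I would start with is that $K_0 \cap \overline{\Q} = K \cap \overline{\Q}$, where $\overline{\Q}$ denotes the algebraic closure of $\Q$ in a fixed algebraic closure $\overline{K}$ of $K$: one direction follows from $K_0 \subseteq K$, and the other from the hypothesis that $K_0$ contains every algebraic element of $K$. Since roots of unity are algebraic and the $p'$-roots of unity in $\overline{K}$ lie in $K$ by hypothesis, they in fact lie in $K_0$. This allows me to invoke Brauer's Lemma 1 both over $K$ and over $K_0$, giving that all Schur indices of absolutely irreducible characters of $G$ equal $1$ over each of these fields. Next, since absolutely irreducible characters of $G$ take values in $\Q(\zeta_{|G|}) \subseteq \overline{\Q}$, the actions of $\Gal(\overline{K}/K)$ and $\Gal(\overline{K_0}/K_0)$ on the set of absolutely irreducible characters both factor through $\Gal(\overline{\Q}/K \cap \overline{\Q})$, so they have identical orbits.

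With these pieces in place, the simple $KG$-modules and the simple $K_0G$-modules are each parametrized by the same set of Galois orbits of absolutely irreducible characters of $G$, with the character of the simple module attached to such an orbit equal to the sum of its members. Let $X_0$ be the simple $K_0G$-module attached to the orbit indexing $X$; then $\chi_{X_0} = \chi_X$ as functions with values in $K_0 \subseteq K$, so $K \otimes_{K_0} X_0$ is a $KG$-module with character $\chi_X$ and is therefore isomorphic to $X$ by semisimplicity of $KG$. The only step requiring real care is the matching of Galois orbits over $K$ and over $K_0$, which is exactly what the equality of their algebraic parts supplies.
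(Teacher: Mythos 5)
Your proposal is correct and follows essentially the same route as the paper: both arguments rest on Brauer's Lemma 1 (trivial Schur indices over any characteristic-zero field containing the $p'$-roots of unity, which lie in $K_0$ because they are algebraic) together with the observation that $K$ and $K_0$ have the same algebraic part, so the Galois orbits on absolutely irreducible characters agree. The only difference is packaging: the paper constructs $X_0$ explicitly as $\bigoplus_{\sigma}{}^{\sigma}Y_0$ over the conjugates of a descended absolutely irreducible constituent, whereas you invoke the standard parametrization of simple modules by Galois orbits and match characters — these are the same argument.
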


\begin{proof}
Since $KG$ is semisimple, we may assume that $X$ is a simple 
$KG$-module. 
Let $K'$ be a splitting field for $X$ containing $K$. Let $Y'$ be a 
simple $K'G$-module which is isomorphic to a direct summand of 
$K'\ten_K X$, and denote by $\psi$ the character of $Y'$. The field 
$K(\psi)$ generated by the values $\psi(g)$, $g\in G$, is contained in 
$K(\zeta)$ for some root of unity of $p$-power order $\zeta$, since
$K$ contains all roots of unity of order prime to $p$. The minimal
polynomial of $\zeta$ over $K$ has algebraic numbers as coefficients,
hence is also the minimal polynomial of $\zeta$ over $K_0$. Thus
the restriction of field automorphisms of $K(\zeta)$ to $K_0(\zeta)$
induces an isomorphism of Galois groups $\Gal(K(\zeta):K)\cong$
$\Gal(K_0(\zeta):K_0)$, hence an isomorphism of Galois groups
$\Gal(K(\psi):K)\cong$ $\Gal(K_0(\psi):K_0)$.
By \cite[Lemma 1]{Brauer41}, $K_0(\psi)$ is a splitting field for
$\psi$; that is, $Y'\cong$ $K'\ten_{K_0(\psi)} Y_0$ for some simple
$K_0(\psi)G$-module $Y_0$; in particular, $Y=$ 
$K(\psi)\ten_{K_0(\psi)} Y_0$ is a simple $K(\psi)$-module which appears
in $K(\psi)\tenK X$. It follows from \cite[Ch. 3 Theorem (1.30)]{NaTs}
that we have an isomorphism
$$K(\psi)\ten_K X \cong \bigoplus_{\sigma\in\Gal(K(\psi):K)}\ 
{^\sigma{Y}}$$
Combining the above isomorphisms yields that the $K_0G$-module
$$X_0 = \bigoplus_{\sigma\in\Gal(K_0(\psi):K_0)}\ {^\sigma{Y_0}}$$
satisfies 
$$K(\psi) \ten_{K_0} X_0 \cong K(\psi)\ten_K X$$
and hence $K\ten_{K_0} X_0\cong$ $X$.
\end{proof}

The following Lemma is well-known; we sketch a proof for convenience.
As before, by a $p$-adic subring of $\CO$ we mean a finite extension 
$\CO_0$ of the $p$-adic integers contained in $\CO$. Note that
$J(\CO_0)\subseteq$ $J(\CO)$. 

\begin{lem}\label{noncomplete-ramified}
Suppose that $k$ is an algebraic closure of $\Fp$. Denote by $K$ the
field of fractions of $\CO$. Let $R$ be the union of all $p$-adic 
subrings of $\CO$, and let $E$ be the field of fractions of $R$ 
identified to its image in $K$. Denote by
$\nu : K^\times \to$ $\Z$ the $\pi$-adic valuation. 

\begin{enumerate}

\item
The field $K$ is the completion of $E$ and $\CO$ is the completion of 
$R$ with respect to the topology induced by $\nu$.

\item
The restriction $\nu|_E$ of $\nu$ to $E^\times$  is a discrete valuation
with valuation ring $R$ and valuation ideal $R\cap \pi\CO$.

\item
The field $E$ is the algebraic closure of $\Q_p $ in $K$.

\end{enumerate}
\end{lem}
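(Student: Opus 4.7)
The plan is to dispose of (3) and (2) first by fairly direct arguments, and then concentrate on the density statement in (1), where the real work lies.

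\textbf{Parts (3) and (2).} For (3), I would observe that each $x \in E$ lies in the fraction field of some $p$-adic subring, which by definition is finite over $\Z_p$, so $x$ is algebraic over $\Q_p$; conversely, any algebraic $x \in K$ generates a finite extension $L = \Q_p(x) \subseteq K$, and the integral closure of $\Z_p$ in $L$ is a DVR finite over $\Z_p$, contained in $\CO$ by integral closedness, hence a $p$-adic subring. For (2), the inclusion $R \subseteq \CO$ immediately gives $\nu|_R \geq 0$; conversely, an element of $E$ with $\nu \geq 0$ sits in $\mathrm{Frac}(R') = L$ for some $p$-adic $R'$, and since $\nu|_L$ is nontrivial and $\Z$-valued it is a positive integer multiple of the normalised valuation of $L$, whose ring of integers is $R'$. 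One also verifies along the way that $R$ is a ring by noting that the family of $p$-adic subrings is directed: the integral closure of $\Z_p$ in the compositum of any two fraction fields contains both.

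\textbf{Part (1).} Everything reduces to showing $R$ is dense in $\CO$ for the $\pi$-adic topology. A first easy step: since $k = \overline{\Fp}$, Hensel's lemma applied to $X^{p^n-1} - 1$ lifts all $(p^n-1)$-th roots of unity from $k$ to $\CO$, so $W(\F_{p^n}) = \Z_p[\zeta_{p^n-1}] \subseteq R$ for each $n$; in particular, the reduction $R \to k$ is surjective, and $\CO = R + \pi\CO$.

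The main obstacle, and the step I expect to be technically delicate, is to exhibit a single uniformizer $\pi'$ of $\CO$ that lies in $R$. Once this is available, the iteration $x = y_0 + \pi' y_1 + \cdots + (\pi')^{n-1} y_{n-1} + (\pi')^n x_n$, with each $y_i \in R$ supplied by the residue-field surjection and the powers of $\pi'$ lying in $R$, produces $R$-approximations of any $x \in \CO$ modulo $\pi^n$, as required. To find $\pi'$, I would invoke Cohen's structure theorem to write $\CO = W(k)[\pi]$ with $\pi$ the root of an Eisenstein polynomial $f \in W(k)[X]$ of degree $e = \nu(p)$; since $\bigcup_n W(\F_{p^n})$ is $p$-adically dense in $W(k)$, one can approximate the coefficients of $f$ by elements of some $W(\F_{p^n}) \subseteq R$ to obtain an Eisenstein polynomial $f' \in W(\F_{p^n})[X]$ with $\nu(f'(\pi))$ arbitrarily large. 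Hensel's lemma then gives a root $\pi' \in \CO$ of $f'$ close to $\pi$; since $f'$ is Eisenstein, $W(\F_{p^n})[\pi']$ is a totally ramified extension of $W(\F_{p^n})$ of degree $e$ with uniformizer $\pi'$, hence a $p$-adic subring of $\CO$, and $\nu(\pi') = 1$. Passing from $(\CO, R)$ to $(K, E)$ by inverting $\pi$ then yields the completion description claimed in (1).
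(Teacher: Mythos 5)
Your proof is correct and follows essentially the same route as the paper: parts (2) and (3) are handled by the same direct arguments, and for (1) the paper likewise writes $\CO = W(k)[\pi]$ with $\pi$ a root of a monic (Eisenstein) polynomial over $W(k)$, uses Teichm\"uller expansions to see that $\bigcup_n W(\F_{p^n})$ is dense in $W(k)$, and then perturbs the coefficients of that polynomial into a finite extension of $\Z_p$, invoking Krasner's lemma (where you invoke Hensel with $\nu(f'(\pi))$ made large --- the same perturbation argument) to land a uniformizer in $R$. No changes needed.
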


\begin{proof} 
Let  $W(k) \subseteq \CO $ be the ring of Witt vectors of $k$ in $\CO$. 
By the  structure  theory of complete discrete  valuation rings, 
$\CO =  W(k) [\pi]$ for an element $\pi$ satisfying a monic polynomial
$f(x)$ in $ W(k)[x]$ (see \cite[Chapter 2, Theorems 3, 4 and Chapter 1, 
Proposition 18]{SeLF}). Every element of  $W(k) $ is a limit of some 
sequence $(\sum_{0\leq i \leq  n}  \zeta_i  p^i)_{n \geq 0} $,  where
$\zeta_n \in  W(k)^{\times} $ is a $p'$-root of unity, for all 
$n \in {\mathbb N}$ (see proof of Theorem 3, Chapter 2 of \cite{SeLF}).
Since every $p'$-root of unity in $W(k)$ lies in a finite extension of
${\mathbb Z}_p$ contained in $W(k)$, it follows that $W(k)$ is the  
completion of the union of the finite extensions of ${\mathbb Z}_p$ 
contained in $W(k)$. Hence, by an application of Krasner's lemma, we 
may assume that $f(x)\in A[x]$, for some finite extension $A$ of 
${\mathbb Z}_p $ contained in $W(k)$  (see \cite[Chapter 2, 
Exercises 1,2]{SeLF}). Consequently, $\pi$ lies in a finite extension 
of $A$ and hence in $R$. Since every element of $\CO$ is a polynomial 
in $\pi$ of degree at most the degree of $f$ and with coefficients in 
$W(k)$, it follows by the same reasoning as above that every element  
of $\CO$ is a limit of a sequence of elements of $R$  implying (1).

As shown above,  $\pi \in R$. Thus   the restriction of $\nu$ to 
$E^\times$  has  valuation  group  ${\mathbb Z}$  and  $\nu|_E$ is a 
discrete valuation. In order to show (2) we need to observe that
$R= E\cap \CO$. The inclusion $\subseteq$ is trivial. For the
reverse inclusion, any element in $E\cap \CO$ belongs to the fraction
field $L$ of some $p$-adic subring $\CO_0$ of $\CO$ with nonnegative 
valuation, hence to the  valuation ring of $L$, and that is precisely 
$\CO_0$. 

If $L$ is a subfield of $K$ containing $\Q_p$ such that the degree
of $L/\Q_p$ is finite, then the valuation ring of $L$ is a $p$-adic
subring of $\CO$, and hence $L$ is contained in $E$. Since $E$ is
the union of finite extensions of $\Q_p$, all elements in $E$ are
algebraic over $\Q_p$, whence statement (3).
\end{proof}

\begin{pro}  \label{PicPro}
Suppose that $k$ is an algebraic closure of $\Fp$. Denote by $K$ the
field of fractions of $\CO$. Let $R$ be the union of all $p$-adic 
subrings of $\CO$, and let $E$ be the field of fractions of $R$ 
identified to its image in $K$. 
Let $G$ be a finite group, $b$ a central idempotent of $\OG$, and set
$B=$ $\OGb$. Let $M$ be a $(B,B)$-bimodule inducing a Morita equivalence
on $B$. There exist a $p$-adic subring $\CO_0$ of $\CO$ such that 
$b\in $ $\CO_0G$ and an $(\CO_0Gb,\CO_0Gb)$-bimodule inducing a Morita
equivalence on $\CO_0Gb$ such that $M\cong$ $\CO\ten_{\CO_0} M_0$. 
\end{pro}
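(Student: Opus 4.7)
The plan is to descend the data $(b,M)$ in three stages---first the block idempotent $b$, then the ambient $K(G\times G)$-representation $K\otimes_\CO M$, and finally the $\CO$-lattice structure of $M$ itself---combining Lemma~\ref{centralidem} with idempotent lifting in complete $\CO_0$-algebras. Stage one handles $b$ easily: its coefficients are algebraic over $\Q$ (being $\Q$-linear combinations of irreducible character projectors), hence by Lemma~\ref{noncomplete-ramified}(3) they lie in $E$, and being also in $\CO$, in $E\cap\CO=R$. Since $b$ has only finitely many coefficients, $b\in\CO_1 G$ for some $p$-adic subring $\CO_1$ of $\CO$.

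Stage two descends the $K$-form of $M$. The space $V:=K\otimes_\CO M$, viewed as a finite-dimensional $K(G\times G)$-module via $(g,h)\cdot m=gmh^{-1}$, satisfies $V\cong K\otimes_E X$ for some $E(G\times G)$-module $X$ by Lemma~\ref{centralidem} applied with $K_0=E$. Fixing an $E$-basis of $X$ exposes only finitely many structure constants for the action of a generating set of $G\times G$, and these lie in a finite extension $E_0$ of $\Q_p$ inside $E$, so $X\cong E\otimes_{E_0}X_0$ for some $E_0(G\times G)$-module $X_0$. Take $\CO_0\supseteq\CO_1$ to be the ring of integers of $E_0$. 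Then $X_0$ becomes a $(B_1,B_1)$-bimodule for $B_1:=E_0 G b$, and faithfully flat descent of Morita equivalence along $B_1\to KB$ ensures that $X_0$ induces a Morita equivalence on $B_1$.

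Stage three pins down a matching $\CO_0$-lattice. Writing $M\cong eB^{(r)}$ as a right $B$-module for some idempotent $e\in M_r(B)$ (by projectivity of $M$), the finitely many coefficients of $e$ can be approximated to any prescribed $\pi$-adic precision by elements of $R$, since $\CO$ is the $\pi$-adic completion of $R$ by Lemma~\ref{noncomplete-ramified}(1). After possibly enlarging $\CO_0$ to contain the approximations, one obtains $e'\in M_r(B_0)$ with $e-e'\in\pi^N M_r(B)$ for any chosen $N$. For $N$ large, $(e')^2-e'$ lies in a high power of $J(M_r(B_0))$, so idempotent lifting in the $\pi_0$-adically complete algebra $M_r(B_0)$ produces an idempotent $e_0\in M_r(B_0)$ close to $e$, and a standard rigidity argument yields a unit $u\in M_r(B)^\times$ close to $1$ with $ueu^{-1}=e_0$; thus $M\cong e_0 B^{(r)}\cong\CO\otimes_{\CO_0}(e_0 B_0^{(r)})$ as right $B$-modules. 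The left $B$-action on $M$ transfers under this isomorphism to an $\CO$-algebra map $\lambda\colon B\to e_0 M_r(B) e_0$; a parallel approximation-and-conjugation argument for the images $\lambda(g)$, $g\in G$---benefiting from the $E_0$-form constructed in stage two---produces an $\CO_0$-algebra homomorphism $\lambda_0\colon B_0\to e_0 M_r(B_0) e_0$ whose base change is inner-equivalent to $\lambda$. Defining $M_0:=e_0 B_0^{(r)}$ as a $(B_0,B_0)$-bimodule with left action via $\lambda_0$, one obtains $\CO\otimes_{\CO_0} M_0\cong M$. That $M_0$ induces a Morita equivalence on $B_0$ follows by faithful flatness of $\CO_0\to\CO$, since finite generation, two-sided projectivity, and the natural isomorphisms of $B_0$ and $B_0^{\op}$ with the endomorphism algebras of $M_0$ all descend from $\CO$.

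The main difficulty is this third stage. Two $\CO(G\times G)$-lattices in a common $K$-module need not be isomorphic, so merely choosing any $G\times G$-invariant $\CO_0$-lattice in $X_0$ will not in general recover $M$; the approximation-and-lifting technique, applied both to the idempotent $e$ and to the algebra map $\lambda$, is what forces the descended lattice $M_0$ to match $M$ up to bimodule isomorphism.
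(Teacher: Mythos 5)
Your stages one and two are correct (though stage two descends further than necessary: you go all the way to an $E_0$-form $X_0$, whereas one only needs the $E$-form $X_E$ from Lemma~\ref{centralidem}; the final choice of $\CO_0$ can be postponed to the very end). The genuine difficulty is in stage three, and there your argument has a gap.

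You correctly observe that an arbitrary $G\times G$-stable $\CO_0$-lattice in $X_0$ need not recover $M$, and your idempotent-lifting argument does produce an idempotent $e_0\in M_r(B_0)$ with $M\cong e_0 B^{(r)}$ as \emph{right} $B$-modules. But the crucial remaining task---descending the left action---is not actually carried out. You assert that ``a parallel approximation-and-conjugation argument for the images $\lambda(g)$, benefiting from the $E_0$-form,'' yields an $\CO_0$-algebra map $\lambda_0\colon B_0\to e_0M_r(B_0)e_0$ whose base change is inner-equivalent to $\lambda$. This is precisely the substance of the descent and cannot be dispatched by mere approximation: close approximations $\lambda'(g)\in e_0M_r(B_0)e_0$ of the $\lambda(g)$ will not in general satisfy the relations of $G$, so $\lambda'$ is not an algebra homomorphism, and you give no mechanism (e.g.\ a rigidity or cohomological vanishing argument) that corrects this to an actual homomorphism conjugate to $\lambda$. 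Nor is it explained how the $E_0$-form $X_0$, which lives in $V=K\otimes_\CO M$ without any a priori compatibility with the lattice $e_0B_0^{(r)}$, ``benefits'' the argument.

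The paper sidesteps this entirely by invoking the classical correspondence between full lattices over a DVR and over its completion (\cite[Corollary 30.10]{CRI}): having produced the $E(G\times G)$-form $X_E$ of $V$, it sets $M_R:=X_E\cap M$, which is automatically an $R(G\times G)$-submodule (hence already carries the full bimodule structure, left and right together), and the cited corollary gives $M\cong\CO\otimes_R M_R$ directly. Choosing an $R$-basis of $M_R$ and collecting the finitely many structure constants into a $p$-adic subring $\CO_0$ then finishes the proof. In effect, what you are trying to reprove by hand in stage three is exactly the content of that lattice correspondence, applied to the group $G\times G$; either cite it as the paper does, or supply the rigidity input (Maranda-type bounds on $\Ext^1$ or an equivalent) needed to make the approximation-and-conjugation step for $\lambda$ rigorous.
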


\begin{proof} 
Let $X= K\otimes_{\CO} M$. So, $M$ is a full $\CO(G\times G)$-lattice 
in $X$. By Lemma  \ref{noncomplete-ramified}, 
$E$ contains all algebraic numbers in $K$. Since $k$ is an algebraic
closure of $\Fp$, it follows that $E$ contains all $p'$-order roots of
unity in an algebraic closure of $K$. Hence by Lemma 
\ref{centralidem}, there exists an $E(G\times G)$-module $X_E$ such 
that $X = K\otimes_E X_E$. Let $M_R=$ $X_E \cap M$. By Lemma 
\ref{noncomplete-ramified}, $\CO$ is the completion of  $R$ and $K$ is 
the completion of $E$.   Hence, by \cite[Corollary 30.10]{CRI}, we 
have that $M \cong \CO\otimes_{R} M_R $. Let ${\mathcal  B}$ be 
an $R$-basis of $M_R$. For $u\in G\times G $ and $x$, $y \in 
{\mathcal B}$, let $\alpha_{x,y}^u $ denote the  coefficient of    
$y$ when $ux$ is written as an $R$-linear combination of elements  
of ${\mathcal B}$.   Let  
$b = \sum_{g\in G}  \beta_g  g$, $\beta _g \in  \CO $. Then    
$\beta_g  \in R$ for all  $g \in G$. Since every finite extension of 
${\mathbb Z}_p$  in $\CO$  is contained in  a $p$-adic  subring of 
$\CO$,  there exists a $p$-adic subring  $\CO_0$ of $\CO$    
containing $\alpha_{x,y}^u$ and containing $\beta_g$ for all  
$u\in G\times G$, $x,y \in {\mathcal B}$ and all $g \in G$.     
Let $M_0$ be the $\CO_0$-submodule of $M_R$ generated by 
${\mathcal B}$. Then $M_0$ is an $(\CO_0Gb,\CO_0Gb)$-bimodule, finitely 
generated and free as $\CO_0$-module, satisfying $M_R\cong$
$R\ten_{\CO_0} M_0$. It follows that $M\cong$ $\CO\ten_{\CO_0} M_0$.
By \cite[Lemma~4.4, Prop. 4.5]{KeLichar}  we have that $M_0$ induces  
a Morita equivalence on $\CO_0Gb$ . The result follows. 
\end{proof}

\begin{proof}[{Proof of Theorem \ref{Pic-finite}}]
Statement (i) follows from Proposition \ref{PicPro}.
Thus $\Pic(B)$ is the colimit of the groups $\Pic(\CO_0 Gb)$ as
stated. By \cite[Theorems (55.19), (55.25)]{CRII}  (see also Theorem 
\ref{Picfinite} below for an alternative proof) the groups 
$\Pic(\CO_0 Gb)$ are finite, whence (ii). 
\end{proof}

\section{On the finiteness and structure of Picard groups over 
$p$-adic rings} 

In this section we provide an alternative proof for the finiteness of 
$\Pic(B)$ for a block algebra $B$ of a group algebra over a $p$-adic 
ring. We start out assuming that $\CO$ has characteristic
zero, and specialize later to the case that $\CO$ is a $p$-adic ring. 
Write $J(\CO)=\pi\CO$ for some prime element $\pi$ of $\CO$.
Let $A$ be an $\CO$-algebra which is free of finite rank as an
$\CO$-module. We use without further comment some standard facts 
relating automorphisms to Picard groups (see \cite[\S 55 A]{CRII} or
\cite[2.8.16]{LiBook} for more details). If $\alpha\in$ $\Aut(A)$, then 
the $(A,A)$-bimodule $A_\alpha$ induces a Morita  equivalence on 
$\mod(A)$, and we have $A_\alpha\cong$ $A$ as $(A,A)$-bimodules if and 
only if $\alpha$ is inner. The map sending $\alpha$ to $A_\alpha$ 
induces an injective group homomorphism $\Out(A)\to$ $\Pic(A)$. If $A$ 
is basic, this is an isomorphism (see e. g. \cite[4.9.7]{LiBook}). 
In general, if $M$ is an $(A,A)$-bimodule inducing a Morita equivalence,
then $M\cong$ $A_\alpha$ as $(A,A)$-bimodules for some $\alpha\in$ 
$\Aut(A)$ if and only if $M\cong$ $A$ as left $A$-modules. In 
particular, if the functor $M\tenA-$ stabilises the isomorphism classes 
of all simple modules, hence of all finitely generated projective 
modules, then $M\cong$ $A_\alpha$ for some $\alpha\in$ $\Aut(A)$. It 
follows that the image of $\Out(A)$ in $\Pic(A)$ has finite index 
because it contains the subgroup of $\Pic(A)$ of Morita equivalences 
which stabilises the isomorphism classes of all simple modules. 

Let $r$ be a positive integer. We denote by $\Aut_r(A)$ the 
set of $\CO$-algebra automorphisms of $A$ which induce the identity on 
$A/\pi^rA$. A trivial verification shows that $\Aut_r(A)$ is a subgroup
of $\Aut(A)$. We denote by $\Out_r(A)$ the image of $\Aut_r(A)$ in 
$\Out(A)$. The map sending $\alpha\in$ $\Aut_r(A)$ to the 
$(A,A)$-bimodule $A_\alpha$ induces a group isomorphism 
$$\Out_r(A) \cong  \ker(\Pic(A)\to\Pic(A/\pi^rA))\ ;$$ 
see e.~g.~\cite[3.1]{Linder}. In particular, $\Out_1(A)$ is isomorphic 
to the kernel of the canonical map $\Pic(A)\to$ $\Pic(k\tenO A)$, and
$\Out_r(A)$ is a normal subgroup of $\Out_1(A)$ for all $r\geq$ $1$.  

\medskip
If $\CO$ is a $p$-adic ring, $G$ a finite group and $B$ a block algebra 
of $\OG$, then by a result of Hertweck and Kimmerle \cite[3.13]{HeKi},
the group $\Out_r(B)$ is trivial, for $r$ large enough. The proof of
\cite[3.13]{HeKi} uses a theorem of Weiss in \cite{Weiss2} (restated
as Theorem 3.2 in \cite{HeKi}). As a consequence of Maranda's theorem 
\cite[(30.14)]{CRI}, this remains true if $\CO$ is not necessarily 
$p$-adic, but Maranda's theorem leads to a larger bound for $r$. More
precisely, the smallest positive integer $r$ for which $\Out_r(B)$ is 
trivial in \cite[3.13]{HeKi} depends only on the ring $\CO$, while the 
lowest bound obtained from Maranda's theorem, applied to the 
$\CO(G\times G)$-modules $B$ and $B_\alpha$, with $\alpha\in$ 
$\Out_r(B)$, would be an integer $r$ such that $\pi^r\CO=$ 
$\pi |G|^2\CO$. We give an elementary proof of the fact that $\Out_r(B)$
is trivial for $r$ large enough which does not require Weiss' 
results and which slightly improves on Maranda's bound; that is, we 
obtain a lower bound for $r$ which depends on $\CO$ and the size of a 
defect group of $B$. 

\begin{pro} \label{OutrBtrivial}
Let $G$ be a finite group, $B$ a block of $\OG$ and $P$ a defect group
of $B$. Let $d$ be the positive integer such that $\pi^d\CO=$ $|P|\CO$.
For any integer $r>d$ the group $\Out_r(B)$ is trivial.
\end{pro}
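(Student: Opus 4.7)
The plan is to prove that any $\alpha\in\Aut_r(B)$ with $r>d$ is inner by showing that the twisted $(B,B)$-bimodule $B_\alpha$ is isomorphic to $B$, via a Maranda-style lifting in which the relevant conductor is $|P|$ rather than $|G|^2$.

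First I would form $B_\alpha$, the $(B,B)$-bimodule with the same underlying $\CO$-module as $B$ but with right action $a\cdot y = a\alpha(y)$. The hypothesis $\alpha \equiv \id \pmod{\pi^r}$ implies that the identity map of the underlying $\CO$-module descends to a bimodule isomorphism $B/\pi^r B \xrightarrow{\sim} B_\alpha/\pi^r B_\alpha$. Viewing both bimodules as $\CO(G\times G)$-modules via $(g,h)\cdot x = gxh^{-1}$, one has that $B$ is a direct summand of $\OG \cong \Ind^{G\times G}_{\Delta G}(\CO)$, hence a trivial source lattice; and since $B$ is indecomposable as a bimodule (as $Z(B)$ is local), $B$ has a unique vertex, namely $\Delta P$ up to $G\times G$-conjugation. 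Similarly $B_\alpha$ is indecomposable, and since it reduces modulo $\pi$ to the same $k(G\times G)$-module as $B$, it also has a vertex of order $|P|$.

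The crux of the argument, and the main obstacle, is a relative Maranda-type lemma: if $M$ and $N$ are $\CO(G\times G)$-lattices whose indecomposable summands all have vertices of order dividing $|P|$ and $M/\pi^{d+1}M \cong N/\pi^{d+1}N$ as $\CO(G\times G)$-modules, then $M \cong N$. I would establish this by invoking Higman's criterion — a $Q$-projective lattice $M$ satisfies $\id_M = \mathrm{tr}^{G\times G}_Q(\phi)$ for some $\phi \in \End_{\CO Q}(\Res^{G\times G}_Q M)$ — together with a standard lifting argument: lift a mod-$\pi^{d+1}$ isomorphism to an $\CO$-linear map on lattices, then correct its failure of $\CO(G\times G)$-linearity by averaging through the relative trace. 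The error term is annihilated by $|Q|$, which divides $|P|$ and hence generates $\pi^d\CO$, so the congruence modulo $\pi^{d+1}$ is precisely what is needed for the correction to succeed. This refines classical Maranda, where the conductor $|G\times G|$ would require congruence modulo a much higher power of $\pi$.

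Applying this lemma to $M=B$ and $N=B_\alpha$, both of which have a vertex of order $|P|$ and satisfy $B/\pi^{d+1}B \cong B_\alpha/\pi^{d+1}B_\alpha$ (since $r\geq d+1$), yields a bimodule isomorphism $\phi\colon B \to B_\alpha$. Writing $u = \phi(1) \in B^\times$, left $B$-linearity forces $\phi(x) = xu$, while right-linearity with respect to the twisted right action on $B_\alpha$ gives $yu = u\alpha(y)$ for all $y \in B$. Hence $\alpha(y) = u^{-1}yu$, so $\alpha$ is inner, completing the proof.
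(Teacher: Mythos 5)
Your proposal is correct, but it proceeds along a genuinely different line from the paper's proof. The paper likewise reduces to showing $B_\alpha\cong B$ as $\CO(G\times G)$-modules, but it does so without any Maranda-type lifting: writing $\alpha(x)x^{-1}=1+\pi|P|c_x$ for $x\in P$, the sum $v=\sum_{x\in P}\alpha(x)x^{-1}$ equals $|P|u$ for the unit $u=1+\pi a$, and a direct computation gives $vy=\alpha(y)v$, hence $uyu^{-1}=\alpha(y)$ for all $y\in P$; after adjusting $\alpha$ by the inner automorphism given by $u$ one may assume $\alpha$ fixes $P$ pointwise, so that $B_\alpha$ is a permutation $\CO(P\times P)$-module, and then $B_\alpha\cong B$ follows from the unique lifting of $p$-permutation modules from $k$ to $\CO$, using only the congruence modulo $\pi$. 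Thus the paper converts the divisibility by $|P|$ into an explicit unit of $B$, whereas you convert it into a relative Maranda theorem with conductor $|P|$. Your route is viable and yields the same bound $r>d$, but the key lemma needs one step made precise: the relative trace $\Tr^{G\times G}_{\Delta P}$ of the lifted map $f$ is not well defined until $f$ is $\Delta P$-linear, so the correction is a two-stage process --- first average $f$ over $\Delta P$ (this is where the division by $|P|$, hence the loss of $\pi^d$, occurs, using torsion-freeness of the target lattice), and only then apply $\Tr^{G\times G}_{\Delta P}(-\circ\phi)$ with $\phi$ a Higman element witnessing the relative $\Delta P$-projectivity of $B$; note that only the relative projectivity of $B$, not of $B_\alpha$, is actually needed. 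With that correction your lemma holds with congruence modulo $\pi^{d+1}$, and the remaining steps (indecomposability of $B$ via $Z(B)$ local, and the deduction that $\alpha$ is inner from a bimodule isomorphism $B\cong B_\alpha$ via $u=\phi(1)$) are standard and correct. Your approach buys a reusable general statement about lattices with prescribed vertices; the paper's is more elementary and self-contained, and is explicitly designed to avoid Weiss- and Maranda-type inputs.
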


\begin{proof}
It suffices to show this for $r=d+1$. Let $\alpha\in$ $\Aut_r(B)$. In 
order to show that $\alpha$ is inner, we need to show that the 
$(B,B)$-bimodule $B_\alpha$ is isomorphic to $B$. Equivalently, we need 
to show that $B_\alpha$ and $B$ are isomorphic as 
$\CO(G\times G)$-modules. By the assumptions on $\alpha$, we have a 
bimodule isomorphism $B_\alpha/\pi^r B_\alpha\cong$ $B/\pi^r B$; in 
particular, $B_\alpha/\pi B_\alpha\cong$ $B/\pi B$ as 
$k(G\times G)$-modules. Since $p$-permutation modules over $k$ lift 
uniquely, up to isomorphism, to $p$-permutation modules over $\CO$, it 
suffices to show that $B_\alpha$ is a $p$-permutation 
$\CO(G\times G)$-module. Since $B$ is relatively $P\times P$-projective,
it suffices to show that $B_\alpha$ is a permutation 
$\CO(P\times P)$-module.

The hypotheses on $\alpha$ imply that for any $x\in$ $B$ we have 
$\alpha(x)=$ $x + \pi |P| c_x$, for some $c_x\in$ $B$. Identify the 
defect group $P$ to its image in $B$. Set $v=$ 
$\sum_{x\in P}\ \alpha(x)x^{-1}$. We have 
$$\alpha(x)x^{-1}= 1 + \pi |P| c_x x^{-1}$$ 
for all $x\in$ $P$. Thus 
$$v = |P|\cdot 1 + \pi |P| a$$ 
for some $a\in$ $B$. Set $u=$ $1 + \pi a$; this element is clearly 
invertible in $B$ and satisfies $|P|u=$ $v$. We are going to show that 
$u y u^{-1}=$ $\alpha(y)$ for all $y\in$ $P$. This is equivalent to 
showing that $uy=$ $\alpha(y)u$, hence to $vy=$ $\alpha(y)v$. In order
to show this, fix $y\in$ $P$. Then 
$$vy=\sum_{x\in P} \alpha(x)x^{-1}y=
\sum_{x\in P} \alpha(yy^{-1}x)x^{-1}y=
\sum_{x\in P} \alpha(y)\alpha(y^{-1}x)x^{-1}y\ .$$
If $x$ runs over all elements of $P$, then so does $y^{-1}x$, and 
therefore this element is equal to $\alpha(y)v$ as required. This 
implies that composing $\alpha$ with the inner automorphism given by 
conjugation with $u^{-1}$ yields an automorphism $\beta$ which is the
identity on $P$ and which belongs to the same class as $\alpha$ in
$\Out(B)$. The fact that $u\in$ $1+\pi B$ implies that $\beta\in$
$\Aut_1(B)$. Since the images of $\alpha$ and $\beta$ in $\Out(B)$
are equal, it follows that $B_\alpha\cong$ $B_\beta$ as 
$(B,B)$-bimodules, or equivalently, as $\CO(G\times G)$-modules.
Since $\beta$ is the identity on 
$P$, it follows that $B_\beta$, and therefore also $B_\alpha$, 
is a permutation $\CO(P\times P)$-module 
(isomorphic to $B$ as an $\CO(P\times P)$-module).  The result follows.
\end{proof}

Combining Proposition \ref{OutrBtrivial} with \cite[3.5]{Linder} yields 
a proof of the following result, including the finiteness of $\Pic(B)$ in
Part~(ii) in the case that $\CO$ is a $p$-adic ring.

\begin{thm}  \label{Picfinite}
Let $G$ be a finite group and $B$ a block of $\OG$ with a defect group 
$|P|$. Set $\bar B=$ $k\tenO B$. We have a canonical exact sequence of 
groups
$$\xymatrix{1 \ar[r] & \Out_1(B) \ar[r] & \Pic(B) \ar[r] &
\Pic(\bar B)}$$
with the following properties.

\smallskip\noindent (i) 
The group $\Out_1(B)$ has a finite $p$-power exponent dividing $p^d$, 
where $d$ is the positive integer satisfying $\pi^d\CO=$ $|P|\CO$.

\smallskip\noindent (ii) 
If $k$ is finite, then $\Pic(B)$ is finite, and $\Out_1(B)$ is a finite 
$p$-group. 

\smallskip\noindent (iii) 
If $k$ is an algebraic closure of $\Fp$, then every element in 
$\Pic(B)$ has finite order.
\end{thm}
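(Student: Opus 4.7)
The plan is to derive the exact sequence directly from the isomorphism $\Out_r(A) \cong \ker(\Pic(A) \to \Pic(A/\pi^r A))$ recalled just above the theorem, specialised to $A = B$ and $r = 1$; this identifies $\ker(\Pic(B) \to \Pic(\bar B))$ with $\Out_1(B)$ at once.

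For (i) the key step would be the filtration estimate $\Aut_r(B)^p \subseteq \Aut_{r+1}(B)$ for every $r \geq 1$. Writing $\alpha \in \Aut_r(B)$ as $\alpha = \id + \delta$ where the $\CO$-linear map $\delta : B \to B$ has image in $\pi^r B$, the operators $\id$ and $\delta$ commute, and so
$$\alpha^p \;=\; \sum_{i=0}^{p} \binom{p}{i}\,\delta^i.$$
For $0 < i < p$ the coefficient $\binom{p}{i}$ lies in $p\Z \subseteq \pi^e \CO$ with $e = \nu(p) \geq 1$, and $\delta^i(B) \subseteq \pi^{ir} B$; the last term satisfies $\delta^p(B) \subseteq \pi^{pr} B$. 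Since both $e + r \geq r + 1$ and $pr \geq r + 1$ when $r \geq 1$ and $p \geq 2$, the additive map $\alpha^p - \id$ takes values in $\pi^{r+1} B$. Iterating $d$ times yields $\Aut_1(B)^{p^d} \subseteq \Aut_{d+1}(B)$, whose image in $\Out(B)$ is trivial by Proposition \ref{OutrBtrivial}. This establishes the exponent bound in (i).

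For (ii) I assume $k$ is finite. Then $\bar B$ is a finite ring, so $\Pic(\bar B)$ is finite by a counting argument on isomorphism classes of bimodules of $\CO$-rank bounded by $\dim_k \bar B$. For each $r \geq 1$ I would then exhibit a group homomorphism
$$\Aut_r(B) \longrightarrow \mathrm{Der}(\bar B), \qquad \alpha \longmapsto \overline{\pi^{-r}(\alpha - \id)},$$
with kernel $\Aut_{r+1}(B)$: reducing $\alpha(xy) = \alpha(x)\alpha(y)$ modulo $\pi^{r+1}$ forces the target to be a derivation of $\bar B$, and a direct verification that $\delta_{\alpha\beta} \equiv \delta_\alpha + \delta_\beta \pmod \pi$ shows the assignment sends composition to addition. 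Since $\mathrm{Der}(\bar B)$ is a finite-dimensional $k$-vector space, each quotient $\Aut_r(B)/\Aut_{r+1}(B)$ is a finite elementary abelian $p$-group, and the filtration $\Out_1(B) \supseteq \cdots \supseteq \Out_{d+1}(B) = 1$ shows $\Out_1(B)$ is a finite $p$-group. The exact sequence then gives $\Pic(B)$ finite.

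Statement (iii) follows by combining (i), the exact sequence, and Lemma \ref{PicFpbar} (the analogue of Theorem \ref{Pic-finite} over $k$), which guarantees that $\Pic(\bar B)$ is a torsion group when $k$ is an algebraic closure of $\Fp$: given $[M] \in \Pic(B)$ its image in $\Pic(\bar B)$ has some finite order $m$, and then $[M]^m \in \Out_1(B)$ has order dividing $p^d$. The main obstacle is the binomial estimate in (i), which relies simultaneously on the $p$-divisibility of the middle coefficients $\binom{p}{i}$ and on the growth in the $\pi$-adic filtration coming from iterating $\delta$; the remaining ingredients are essentially routine given the preparatory Proposition \ref{OutrBtrivial}.
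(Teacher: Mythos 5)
Your proof is correct and tracks the paper's structure closely, with the main deviation being that you inline the first-order estimates that the paper packages as Proposition~\ref{OutProp} and Corollary~\ref{OutCor}. For the exact sequence you use exactly the paper's identification $\Out_1(B)\cong\ker(\Pic(B)\to\Pic(\bar B))$. For part (i) your binomial expansion of $(\id+\delta)^p$ is a clean way of repackaging the induction ``$\alpha^n(a)\equiv a+n\pi^r\tau(a)\pmod{\pi^{r+1}A}$'' used in the proof of Corollary~\ref{OutCor}(ii); both arguments, yours and the paper's, then terminate via Proposition~\ref{OutrBtrivial} to get $\Out_{d+1}(B)=1$, and both implicitly use the elementary fact that a group with a normal series of length $d$ and all successive quotients of exponent $p$ has exponent dividing $p^d$. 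For part (ii) your map $\Aut_r(B)\to\mathrm{Der}(\bar B)$, $\alpha\mapsto\overline{\pi^{-r}(\alpha-\id)}$, with kernel $\Aut_{r+1}(B)$, is a modest simplification over the paper's Proposition~\ref{OutProp}: your target $\mathrm{Der}(\bar B)$ is a fixed finite-dimensional $k$-space independent of $r$, whereas the paper maps $\Out_r(A)$ to $HH^1(A/\pi^r A)$, a target varying with $r$, obtaining the coarser kernel $\Out_{2r}(A)$ and then refining it in Corollary~\ref{OutCor}(ii). Your version buys a slightly shorter path to finiteness of $\Out_1(B)$ for $k$ finite, at the cost of losing the $HH^1$ description that the paper uses elsewhere. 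Do note that your argument gives finiteness of $\Aut_r(B)/\Aut_{r+1}(B)$, which you must then observe surjects onto $\Out_r(B)/\Out_{r+1}(B)$; this is immediate but worth stating. For the finiteness of $\Pic(\bar B)$ when $k$ is finite, the paper's argument (that $\Out(\bar B)$ is a finite group of finite index in $\Pic(\bar B)$) is more precise than your counting sketch, which as written would need to justify the claimed rank bound on a Morita bimodule; this is not a gap but a place where the paper's route is tighter. Part (iii) is identical to the paper's.
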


We restate the part of \cite[3.5]{Linder} required for 
the proof of Theorem \ref{Picfinite}.

\begin{pro}[{cf. \cite[3.5]{Linder}}] \label{OutProp}
Let $A$ be an $\CO$-algebra which is finitely generated free as an 
$\CO$-module. Let $r$ be a positive integer. Suppose that the canonical 
map $Z(A)\to$ $Z(A/\pi^rA)$ is surjective. We have an exact sequence of 
groups
$$\xymatrix{1 \ar[r] & \Out_{2r}(A) \ar[r] & \Out_r(A) \ar[r] &
HH^1(A/\pi^r A)} \ .$$
For any integer $a\geq$ $0$, the map $\Out_r(A) \to HH^1(A/\pi^rA)$ in 
this sequence sends the subgroup $\Out_{r+a}(A)$ of $\Out_r(A)$ to 
$\pi^a HH^1(A/\pi^r)$.  
\end{pro}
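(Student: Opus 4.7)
The plan is to construct the homomorphism $\Out_r(A)\to HH^1(A/\pi^rA)$ explicitly and identify its kernel. Given $\alpha\in\Aut_r(A)$, since $\pi$ is not a zero divisor on $A$, there is a unique $\CO$-linear map $\delta_\alpha:A\to A$ with $\alpha(x)=x+\pi^r\delta_\alpha(x)$, and the relation $\alpha(xy)=\alpha(x)\alpha(y)$ yields
$$\delta_\alpha(xy)=\delta_\alpha(x)y+x\delta_\alpha(y)+\pi^r\delta_\alpha(x)\delta_\alpha(y).$$
Reduction modulo $\pi^r$ therefore makes the induced map $\bar\delta_\alpha$ a derivation of $A/\pi^rA$, and expanding $\alpha\circ\beta$ gives $\delta_{\alpha\beta}\equiv\delta_\alpha+\delta_\beta\pmod{\pi^r}$, so $\alpha\mapsto[\bar\delta_\alpha]$ is a group homomorphism from $\Aut_r(A)$ to $HH^1(A/\pi^rA)$.

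Next, I would show this factors through $\Out_r(A)$. If $\alpha=c_u\in\Aut_r(A)$ for some $u\in A^\times$, then $uxu^{-1}\equiv x\pmod{\pi^rA}$ forces $\bar u\in Z(A/\pi^rA)$. Here the surjectivity hypothesis on $Z(A)\to Z(A/\pi^rA)$ is what is needed: lift $\bar u$ to some $z\in Z(A)$; since $\bar z\in(A/\pi A)^\times$ and $A$ is $\pi$-adically complete, $z$ is a unit of $A$ by Hensel's lemma, so $u=z(1+\pi^rv)$ for some $v\in A$. Centrality of $z$ gives $c_u=c_{1+\pi^rv}$, and expanding $(1+\pi^rv)x(1+\pi^rv)^{-1}$ yields $\bar\delta_{c_u}(\bar x)=[\bar v,\bar x]$, which is an inner derivation of $A/\pi^rA$.

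For the kernel, the inclusion $\Out_{2r}(A)\subseteq\ker$ is immediate, since $\alpha\in\Aut_{2r}(A)$ forces $\delta_\alpha(A)\subseteq\pi^rA$ and hence $\bar\delta_\alpha=0$. Conversely, if $\bar\delta_\alpha=[\bar v,\,\cdot\,]$ for some $v\in A$, the same expansion as in the previous paragraph shows that $c_{1+\pi^rv}^{-1}\circ\alpha$ lies in $\Aut_{2r}(A)$, so $\alpha$ has the same class in $\Out_r(A)$ as an element of $\Out_{2r}(A)$. For the $\pi^a$-refinement: if $\alpha\in\Aut_{r+a}(A)$, writing $\alpha(x)=x+\pi^{r+a}\varepsilon(x)$ gives $\delta_\alpha=\pi^a\varepsilon$, where $\varepsilon$ reduces modulo $\pi^r$ to a derivation $\bar\varepsilon$ of $A/\pi^rA$ by the identity above, so the image of $\alpha$ under the map equals $\pi^a[\bar\varepsilon]\in\pi^aHH^1(A/\pi^rA)$.

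I expect the central lifting step to be the main delicate point: without the hypothesis that $Z(A)\twoheadrightarrow Z(A/\pi^rA)$, an inner automorphism in $\Aut_r(A)$ need not induce an inner derivation on $A/\pi^rA$, and the desired factorization through $\Out_r(A)$ would fail; the other steps are then direct expansions, controlled by keeping careful track of the $\pi$-adic order of the error terms.
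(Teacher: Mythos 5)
Your proposal is correct and follows the same route as the paper (which itself only sketches the construction and defers the details to the cited reference \cite[3.5]{Linder}): the integrable-derivation map $\alpha\mapsto\bar\delta_\alpha$, the verification that it is a homomorphism, the identification of the kernel with $\Out_{2r}(A)$, and the $\pi^a$-refinement all match. Your central-lifting argument via the surjectivity of $Z(A)\to Z(A/\pi^rA)$ and Hensel's lemma correctly supplies the one step the paper only asserts, namely that inner automorphisms in $\Aut_r(A)$ induce inner derivations of $A/\pi^rA$.
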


We briefly review the construction of the map from $\Out_r(A)\to$ 
$HH^1(A/\pi^rA)$ in Proposition \ref{OutProp}. Let $\alpha\in$ 
$\Aut_r(A)$. For all $a\in$ $A$ we have $\alpha(a)=$ $a + \pi^r\tau(a)$ 
for some $\tau(a)\in$ $A$. By comparing the expressions $\alpha(ab)$ and
$\alpha(a)\alpha(b)$ for any two $a$, $b\in$ $A$, one sees that the 
linear map $\tau$ on $A$ induces a derivation $\bar\tau$  on $A/\pi^rA$.
(For the ring $k[[t]]$ instead of $\CO$ and $r=1$, this construction is 
due to Gerstenhaber; the derivations which arise in this way are called 
{\it integrable}.) It is shown in 
\cite[3.5]{Linder} that the map $\alpha\mapsto\bar\tau$ sends inner 
automorphisms to inner derivations, hence induces a map $\Out_r(A)\to$ 
$HH^1(A/\pi^rA)$. This is shown to be a group homomorphism. By 
construction, this map sends $\Out_{r+a}(A)$ to $\pi^a HH^1(A/\pi^r A)$,
hence has $\Out_{2r}(A)$ in its kernel. It is further shown in 
\cite[3.5]{Linder} that $\Out_{2r}(A)$ is indeed equal to that kernel.  

\medskip
Block algebras satisfy the hypothesis on the surjectivity of the 
canonical map $Z(A)\to$ $Z(A/\pi^rA)$.  We note the following  
consequences of Proposition \ref{OutProp}.

\begin{cor} \label{OutCor}
Let $A$ be an $\CO$-algebra which is finitely generated free as an 
$\CO$-module. Let $r$ be a positive integer. Suppose that the canonical 
map $Z(A)\to$ $Z(A/\pi^rA)$ is surjective. The following hold.

\smallskip\noindent (i)
The quotient $\Out_r(A)/\Out_{2r}(A)$ is abelian of exponent dividing 
$p^r$. If $k$ is finite, then $\Out_r(A)/\Out_{2r}(A)$ is a finite
abelian $p$-group of exponent dividing $p^r$.

\smallskip\noindent (ii)
The quotient $\Out_r(A)/\Out_{r+1}(A)$ is abelian of exponent dividing 
$p$. If $k$ is finite, then $\Out_r(A)/\Out_{r+1}(A)$ is a finite 
elementary abelian $p$-group. 
\end{cor}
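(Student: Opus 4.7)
The plan is to deduce (i) directly from the exact sequence of Proposition \ref{OutProp}, which gives an injection $\Out_r(A)/\Out_{2r}(A)\hookrightarrow HH^1(A/\pi^rA)$. The target is abelian, so the subgroup is abelian. Furthermore, $HH^1(A/\pi^rA)$ is naturally a module over $\CO/\pi^r\CO$, and the additive group of $\CO/\pi^r\CO$ is annihilated by $p^r$, since $p\in\pi\CO$ forces $p^r\in\pi^r\CO$. Hence $\Out_r(A)/\Out_{2r}(A)$ has exponent dividing $p^r$. When $k$ is finite, the ring $A/\pi^rA$ is finite, so $HH^1(A/\pi^rA)$ is finite, and $\Out_r(A)/\Out_{2r}(A)$ is then a finite abelian $p$-group of exponent dividing $p^r$.

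For (ii), since $r\geq 1$ gives $\Out_{2r}(A)\subseteq\Out_{r+1}(A)$, the group $\Out_r(A)/\Out_{r+1}(A)$ is a quotient of $\Out_r(A)/\Out_{2r}(A)$, hence abelian, and finite when $k$ is finite. For the exponent bound $p$, Proposition \ref{OutProp} records only that $\Out_{r+1}(A)$ maps \emph{into} $\pi HH^1(A/\pi^rA)$, not onto, so I would avoid the cohomological route and instead compute directly. Writing $\alpha(a)=a+\pi^r\tau(a)$ for $\alpha\in\Aut_r(A)$, where $\tau:A\to A$ is the $\CO$-linear map induced by $\alpha-\mathrm{id}$, an easy induction on $n$ gives the binomial expansion
$$\alpha^n(a)=\sum_{k=0}^n \binom{n}{k}\pi^{kr}\tau^k(a).$$
In particular $\alpha^p(a)\equiv a+p\pi^r\tau(a)\pmod{\pi^{2r}A}$, and both $p\pi^r$ (since $p\in\pi\CO$) and $\pi^{2r}$ (since $2r\geq r+1$) lie in $\pi^{r+1}\CO$. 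Hence $\alpha^p\in\Aut_{r+1}(A)$, giving the desired exponent.

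The main potential pitfall is the temptation to push the exact-sequence argument of (i) through to (ii): one might hope for an embedding $\Out_r(A)/\Out_{r+1}(A)\hookrightarrow HH^1(A/\pi^rA)/\pi HH^1(A/\pi^rA)$, but this would require showing that any $\alpha\in\Aut_r(A)$ whose associated derivation is divisible by $\pi$ differs, up to an inner automorphism of the form $\mathrm{conj}(1+\pi^r x)$, from an element of $\Aut_{r+1}(A)$. This reverse direction can be carried out by a short calculation, but the direct binomial expansion above is the cleaner route and makes no further use of Proposition \ref{OutProp} for (ii).
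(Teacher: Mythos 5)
Your proof is correct and follows essentially the same route as the paper. Part (i) is identical: the exact sequence gives an embedding into $HH^1(A/\pi^rA)$, which is a $\CO/\pi^r\CO$-module and hence annihilated by $p^r$, and finiteness when $k$ is finite follows from finiteness of $A/\pi^rA$. For part (ii), the paper also works directly with the expansion $\alpha(a)=a+\pi^r\tau(a)$; it proves by induction that $\alpha^n(a)\equiv a+n\pi^r\tau(a)\pmod{\pi^{r+1}A}$ and takes $n=p$, whereas you write out the full binomial identity $\alpha^n=\sum_{k=0}^n\binom{n}{k}\pi^{kr}\tau^k$ (valid since $\mathrm{id}$ and $\tau$ commute in $\End_\CO(A)$) and then reduce. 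These are the same computation, just packaged a little differently, and your aside about why the cohomological embedding does not directly give the exponent-$p$ bound is a correct and reasonable observation about why the direct calculation is needed.
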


\begin{proof}
The exact sequence in \ref{OutProp} implies that the quotient
$\Out_r(A)/\Out_{2r}(A)$ is isomorphic to a subgroup of the additive
abelian group $HH^1(A/\pi^rA)$. This group is annihilated by $\pi^r$, 
hence by $p^r$. If $k$ is finite, then $A/\pi^rA$ is a finite set, and 
hence $HH^1(A/\pi^rA)$ is a finite abelian $p$-group of exponent
dividing $p^r$. This proves (i). It follows from (i) that the quotient 
$\Out_r(A)/\Out_{r+1}(A)$ is abelian of exponent dividing $p^r$. We 
need to show that its exponent is at most $p$. Let $\alpha\in$ 
$\Aut_r(A)$. For $a\in$ $A$, write $\alpha(a)=$ $a + \pi^r\tau(a)$ for 
some $\tau(a)\in$ $A$. An easy induction shows that for $n\geq$ $1$ we 
have $\alpha^n(a)\equiv$ $a+n\pi^r\tau(a)$ modulo $\pi^{r+1}A$, and 
thus $\alpha^p\in$ $\Aut_{r+1}(A)$. The rest of (ii) follows as in (i).
\end{proof}

One can improve the statement on the exponent in statement (i) of 
Corollary \ref{OutCor} by taking ramification into account; we 
ignored this in the above proof and simply argued that if $\pi^r$ 
annihilates an $\CO$-module, then the underlying abelian group has 
exponent dividing $p^r$.
The following statement is well-known; we include a proof for 
convenience.

\begin{lem} \label{PicFpbar}
Let $k$ be an algebraic closure of $\Fp$, let $A$ be a 
finite-dimensional $k$-algebra, and let $X$ be a $k$-basis of $A$. 
Then $\Pic(A)$ is the colimit of the
finite groups $\Pic(A_\F)$, with $\F$ running over the finite subfields
of $k$ which contain the multiplicative constants of $X$, where $A_\F$ 
is the $\F$-algebra spanned by $X$. In particular, every element in 
$\Pic(A)$ has finite order.
\end{lem}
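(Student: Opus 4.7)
The plan is to mimic the descent argument of Proposition \ref{PicPro}, replacing the $p$-adic setup by extensions of finite subfields of $k$. Given a bimodule $M$ representing a class in $\Pic(A)$, I would first choose a $k$-basis $\mathcal{B}$ of $M$; for each $x\in X$ and each $b\in\mathcal B$, the coefficients expressing $xb$ and $bx$ in the basis $\mathcal B$ are finitely many elements of $k$, so they all lie in some finite subfield $\F\subset k$, which I then enlarge to contain the multiplicative constants of $X$ so that $A_\F$ is defined as in the statement. Letting $M_\F$ be the $\F$-span of $\mathcal B$, then $M_\F$ is an $(A_\F,A_\F)$-bimodule and $M\cong k\ten_\F M_\F$ as bimodules.

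Next I would descend the Morita equivalence itself. Choose a bimodule $N$ inverse to $M$ in $\Pic(A)$, together with fixed bimodule isomorphisms $M\ten_A N\cong A$ and $N\ten_A M\cong A$. Applying the procedure above to $N$ yields an $\F$-form $N_\F$ after enlarging $\F$ once more, and the two structural isomorphisms are determined by finitely many coefficients in $k$, so after a final enlargement they descend to homomorphisms $M_\F\ten_{A_\F}N_\F\to A_\F$ and $N_\F\ten_{A_\F}M_\F\to A_\F$. Faithful flatness of $k/\F$ then guarantees that the descended maps are isomorphisms, since they become isomorphisms after base change to $k$. Hence $[M_\F]\in\Pic(A_\F)$ maps to $[M]$ under extension of scalars $\Pic(A_\F)\to\Pic(A)$. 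Applying the same reasoning to products $[M_1]\cdot[M_2]$ over a common $\F$ shows that extension of scalars is a group homomorphism, so the $\Pic(A_\F)$ assemble into a direct system indexed by the finite subfields containing the multiplicative constants of $X$.

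It then remains to show that $\Pic(A_\F)$ is finite for each finite $\F$. I would argue that $A_\F$ is a finite ring and that any Morita self-equivalence bimodule $M_\F$ is a progenerator with $\End_{A_\F}(M_\F)\cong A_\F^\op$, which forces $\dim_\F M_\F=\dim_\F A_\F$ (the multiplicities of the projective indecomposables of $A_\F$ in $M_\F$ as a left module equal those in $A_\F$ itself, up to a permutation of simple modules). Only finitely many isomorphism classes of left $A_\F$-modules of a given $\F$-dimension exist when $A_\F$ is finite, and the right action on each such module is given by a ring homomorphism from the finite ring $A_\F$ to a finite ring. Combined with the previous paragraph, this identifies $\Pic(A)$ with the directed colimit of the finite groups $\Pic(A_\F)$, and the torsion statement is immediate.

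The main obstacle, in my view, is keeping the descent coherent: one must simultaneously descend $M$, its inverse $N$, and the two structural isomorphisms to a common finite subfield $\F$, since an $\F$-form of $M$ in isolation need not carry a Morita equivalence structure over $\F$. Once this bookkeeping is arranged, everything else reduces to standard finiteness statements for modules over finite rings and to the good behavior of Morita equivalence under faithfully flat base change.
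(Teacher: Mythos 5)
Your argument is correct in outline but takes a genuinely different route from the paper. The paper first reduces to the case where $A$ is basic, so that $\Pic(A)\cong\Out(A)$, and then descends a single algebra automorphism: the finitely many coefficients expressing $\alpha(x)$, $x\in X$, in the basis $X$ generate a finite subfield, so $\alpha$ is the scalar extension of an automorphism of some $A_\F$, which has finite order simply because $A_\F$ is a finite set. You instead descend the invertible bimodule itself together with its inverse and the two structural isomorphisms, and use faithfully flat base change to see that the descended data is again a Morita equivalence. This is longer, but it avoids the (tacit) compatibility issues in passing to the basic algebra, and it is the exact analogue of the characteristic-zero descent in Proposition \ref{PicPro}, so it makes the two finiteness sections uniform; the paper's route is shorter and makes the ``finite order'' conclusion immediate.

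Two points to repair. First, the assertion that $\dim_\F M_\F=\dim_\F A_\F$ is false in general: as you note, the multiplicities of the projective indecomposables in $M_\F$ are those in $A_\F$ up to a permutation $\sigma$ of the simples, but since $\dim_\F P_{\sigma(i)}$ need not equal $\dim_\F P_i$ the total dimensions can differ (take $A_\F=\F\times M_2(\F)$ with the self-equivalence swapping the two blocks: $\dim M_\F=4$ while $\dim A_\F=5$). What your parenthetical actually yields is the bound $\dim_\F M_\F\le(\dim_\F A_\F)^2$, which is all the finiteness count requires. Second, to identify $\Pic(A)$ with the colimit (rather than merely showing each level maps to $\Pic(A)$ and that the union of the images is everything) you also need that two classes in $\Pic(A_\F)$ which become equal in $\Pic(A)$ already agree in $\Pic(A_{\F'})$ for some finite $\F'\supseteq\F$; this follows by descending the bimodule isomorphism over $k$ exactly as you descend the structural isomorphisms, but it should be stated.
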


\begin{proof}
We may assume that $A$ is basic, and hence that $\Out(A)\cong$ 
$\Pic(A)$. Since $k$ is an algebraic closure of $\Fp$, all elements
in $k$ are algebraic, and hence any finite subset of $k$ generates a
finite subfield. In particular, the multiplicative constants of the 
basis $X$ generate a finite subfield $k_0$ of $A$. Let $\alpha\in$ 
$\Aut(A)$. For any $x\in$ $X$, write $\alpha(x)=$ 
$\sum_{y\in X}\ \lambda(x,y) y$ for some coefficients $\lambda(x,y)$ in 
$k$. By the above, these coefficients are contained in a finite subfield 
$k_1$ of $k$, which we may choose to contain $k_0$. Thus $\alpha$ is the 
extension to $A$ of a $k_1$-algebra automorphism $\alpha_1$ of the 
$k_1$-subalgebra $A_1$ of  $A$ with $k_1$-basis $X$, and hence
$\Pic(A)$ is a colimit as stated. Since $A_1$ is a finite 
set, it follows that $\alpha_1$ has finite order, implying that $\alpha$ 
has finite  order. This completes the proof.
\end{proof}

\begin{proof}[{Proof of Theorem \ref{Picfinite}}]
Set $\bar B=$ $k\tenO B$. As mentioned above, the kernel of the 
canonical map $\Pic(B)\to$ $\Pic(\bar B)$ is isomorphic to $\Out_1(B)$,
whence the canonical exact sequence as stated.

By \ref{OutCor}, the group $\Out_1(B)$ is filtered by the normal 
subgroups $\Out_i(B)$ such that the quotient of each two consecutive 
groups has exponent at most $p$. By Maranda's theorem 
\cite[(30.14)]{CRI} or by Proposition \ref{OutrBtrivial} above, there 
exists a positive integer $r$ such that $\Out_r(B)$ is trivial, and 
then $\Out_1(B)$ has exponent dividing $p^{r-1}$. The statement on the 
exponent in (i) follows from Proposition \ref{OutrBtrivial}. 

By \ref{OutCor} again, if $k$ is finite, then the quotients of 
subsequent subgroups in this filtration are finite elementary abelian 
$p$-groups, and hence $\Out_1(B)$ is a finite $p$-group in that case. 
If $k$ is finite, then $k\tenO B$ is a finite set; in particular, its 
automorphism group as a $k$-algebra is finite. Since $\Out(\bar B)$ is 
isomorphic to a subgroup of finite index in $\Pic(\bar B)$, it follows 
that $\Pic(\bar B)$ is finite, and hence so is $\Pic(B)$, proving (ii). 

For statement (iii), assume that $k$ is an algebraic closure of $\Fp$. 
By (i), every element in $\Out_1(B)$ has finite order, and hence it
suffices to show that every element in $\Pic(\bar B)$ has finite 
order. The result follows from Lemma
\ref{PicFpbar}.
\end{proof}

\section{Examples}

\begin{exa} \label{nilpotentEx}
Let $G$ be a finite group and let $B$ be a nilpotent block of $\OG$ with
a nontrivial defect group $P$. By results of Roggenkamp and Scott 
\cite{RoSc87} (see also Weiss \cite{Weiss}, \cite{Weiss2}), we have a 
canonical group isomorphism
$$\Pic(\OP) = \CL(\OP) \cong \Hom(P,\CO^\times)\rtimes \Out(P)$$
This isomorphism is induced by the map $\Phi$ in Theorem
\ref{thm:endoMorita} applied to $\OP$. (The source algebra $A$ in 
\ref{thm:endoMorita} is in that case $\OP$, and $\Out_P(A)$ is trivial).

By the structure theory of nilpotent blocks from Puig \cite{Punil}
there is a Morita equivalence between $\OP$ and $B$ given by a 
$(B, \OP)$-bimodule $M$ with vertex $\Delta P$ and endopermutation
source $V$. Then the $\CO$-dual $M^*$ has vertex $\Delta P$ and 
endopermutation source $V^*$.  The map sending an $(\OP, \OP)$-bimodule 
$N$ to $M\tenOP N\tenOP M^*$ induces an isomorphism
$$\Pic(\OP)\cong \Pic(B)$$
Thus the map $\Phi$ in Theorem \ref{thm:endoMorita} applied to the 
nilpotent block $B$ sends $\Pic(B)$ isomorphically to the subgroup 
$$[V]\cdot (\Hom(P,\CO^\times)\rtimes\Out(P)) \cdot[V^*]$$
of $D_\CO(P)\rtimes\Out(P)$. The source algebra $A$ is in this case
isomorphic to $\End_\CO(V)\tenO \OP$, and $\Out_P(A)$ is
again trivial (because the inertial quotient of $B$ is trivial). 
It follows that the elements of $\Pic(B)$ correspond under $\Phi$
to elements in $D_\CO(P)\rtimes\Out(P)$ of the form
$$[V] \cdot[\zeta]\cdot [\varphi]\cdot [V^*] =
[\zeta]\cdot [V\ten {^\varphi(V^*)}]\cdot [\varphi]$$
where $[V]$, $[\zeta]$ are the classes in $D_\CO(P)$ of $V$ 
and of the rank $1$ module determined by $\zeta\in$ 
$\Hom(P,\CO^\times)$, respectively, and where $[\varphi]$ is
the class of an automorphism $\varphi$ of $P$ in $\Out(P)$.
If $V$ is not stable under $\varphi$, then the commutator
$$[V\ten {^\varphi(V^*)}] = [ [V],[\varphi]]$$
in $D_\CO(P)\rtimes\Out(P)$ is not trivial. In that case, the
corresponding element of $\Pic(B)$ is given by a bimodule with 
$\Delta\varphi$ as a vertex and a nontrivial, and possibly non-linear, 
endopermutation source. This scenario does arise; see the next example.
\end{exa}

\begin{exa} \label{productExa}
Any Morita equivalence between two $\CO$-algebras $A$, $B$ given
by an $(A,B)$-bimodule $M$ and a $(B,A)$-bimodule $N$
can be interpreted as a self Morita equivalence of the algebra
$A\tenO B$ given by the bimodules $(M\tenO N)_\tau$ and
${_{\tau}(N\tenO M)}$, where $\tau : A\tenO B\to$ $B\tenO A$
is the isomorphism satisfying $\tau(a\ten b)=$ $b\ten a$. Applied to 
block algebras of finite groups, one can use this to construct
self Morita equivalences with endopermutation sources which
need not be linear.

Let $G$, $H$ be finite groups, $b$, $c$ blocks of $\OG$, $\OH$,
respectively, with a common defect group $P$. Set $B=$ $\OGb$ and
$C=$ $\OHc$.  Then $B\tenO C$ is a block of $G\times H$ 
with defect group $P\times P$. 

Let $M$ be a $(B,C)$-bimodule with vertex $\Delta P$ and 
endopermutation source $V$ inducing a Morita equivalence between $B$ and
$C$. Then $M\tenO C$, regarded as a $(B\tenO C,C\tenO C)$-bimodule, 
induces a Morita equivalence between $B\tenO C$ and $C\tenO C$ with 
vertex $\Delta(P\times P)$ and source $V\tenO \CO$. Let $\tau$ be the 
automorphism of $C\tenO C$ defined by $\tau(c\ten c')=$ $c'\ten c$. 
Then $\tau$ restricts to an automorphism of $P\times P$ exchanging the 
two copies of $P$, inducing an automorphism of $\Delta(P\times P)$ in 
the obvious way (and all those restrictions of $\tau$ are again denoted 
by $\tau$).
  
Thus the $(C\tenO C, C\tenO C)$-bimodule $T=$ $(C\tenO C)_\tau$ 
induces a self Morita equivalence with vertex 
$$\Delta\tau=\{((u,v),(v,u))\ |\ u,v\in P\}$$ 
and trivial source.  The $(B\tenO C, B\tenO C)$-bimodule 
$$(M\tenO C)\ten_{C\tenO C} T \ten_{C\tenO C} (M^*\tenO C)$$
induces a self Morita equivalence of $B\tenO C$ with
vertex $\Delta\tau$ and the  $\Delta\tau$-module $V\tenO V^*$ as 
a source (and this is nontrivial if $V$ is nontrivial, and nonlinear 
if $V$ is nonlinear).
\end{exa}


\end{document}